\documentclass[12pt]{amsart}
\usepackage[all]{xy}
\usepackage{amssymb}
\usepackage{mathrsfs}
\calclayout
\newtheorem{dummy}{anything}[section]
\newtheorem{theorem}[dummy]{Theorem}

\newtheorem*{thma}{Theorem A}
\newtheorem*{thmb}{Theorem B}
\newtheorem*{thmc}{Theorem C}

\newtheorem{lemma}[dummy]{Lemma}

\newtheorem{corollary}[dummy]{Corollary}

\theoremstyle{definition}
\newtheorem{definition}[dummy]{Definition}

\newtheorem{remark}[dummy]{Remark}

\newcommand{\cH}{\mathcal H}

\newcommand{\cE}{\mathcal E}

\newcommand{\Aut}{\operatorname{Aut}}
\newcommand{\Isom}{\operatorname{Isom}}

\newcommand{\id}{\mathrm{id}}

\newcommand{\connsum}{\mathbin{\sharp}}

\newcommand{\mmatrix}[4]{\left (\vcenter
{\xymatrix@C-2pc@R-2pc{#1&#2\\#3&#4} } \right )}

\DeclareMathOperator{\Hom}{Hom}

\DeclareMathOperator{\hepta}{Aut}

\newcommand{\hept}[1]{\hepta_{\bullet}(#1)}

\newcommand{\heqpt}[1]{\cE_{\bullet} (#1)}
\newcommand{\htildeM}{\widetilde \cH (M)}

\newcommand{\Om}{\Omega}
\newcommand{\rOm}{\widehat\Omega}
\newcommand{\B}{BO\langle n+1 \rangle}

\newtheorem*{thmd}{Theorem D}
\newtheorem*{thme}{Theorem E}
\newtheorem*{thmf}{Theorem F}
\begin{document}
\title[Homotopy Self-Equivalences of Manifolds]
{Homotopy Self-Equivalences of Manifolds}
\author{Mehmetc\.{i}k Pamuk}
\subjclass[2010]{Primary: 57N65; Secondary: 55P10, 57N13, 57R80}

\address{Department of Mathematics
\newline\indent
Middle East Technical University
\newline\indent
Ankara 06531, Turkey}
\email{mpamuk{@}metu.edu.tr}
\date{\today}

\begin{abstract}
Hambleton and Kreck described the group of based homotopy self-equivalences of a closed oriented $4$-manifold by a commutative braid of exact sequences built from bordism groups and the Postnikov $2$-type. We construct the analogous braid for closed oriented $BO\langle n+1\rangle$-manifolds $M$ of dimension $2n$ or $2n-1$ whose fundamental group is finite or infinite cyclic and which satisfy one condition on the action of $\pi_1$ on the homotopy of the fibre of $M\to P_n(M)$. We compute $\operatorname{Aut}_\bullet(S^1\times S^{m-1})\cong(\mathbb{Z}/2)^2$ for $m\ge4$, and $\operatorname{Aut}_\bullet(S^3\times S^3)\cong(\mathbb{Z}/12)^2\rtimes SL_2(\mathbb{Z})$ with the standard action modulo $12$. Further, $\operatorname{Aut}_\bullet(S^3\times S^4)\cong(\mathbb{Z}/2\oplus\mathbb{Z}/24)\rtimes(\mathbb{Z}/2)^2$ has order $192$, that $\operatorname{Aut}_\bullet(S^4\times S^4)\cong D_4\times\mathbb{Z}/2$ has order $16$, and that for connected sums of $r\ge2$ copies of $S^3\times S^3$ the kernel contains a subgroup $(\mathbb{Z}/12)^{2r}$ on which the permutations of the summands act non-trivially. For $S^p\times S^p$ the kernel is a sum of two copies of $\pi_{2p}(S^p)$ modulo Whitehead products with $\iota_p$, and the image is determined by an obstruction on the Whitehead square. In particular
$\operatorname{Aut}_\bullet(S^5\times S^5)$ is isomorphic to the theta subgroup of $SL_2(\mathbb{Z})$
and $\operatorname{Aut}_\bullet(S^7\times S^7)$ is an extension of $SL_2(\mathbb{Z})$ by $(\mathbb{Z}/120)^2$.
\end{abstract}

\maketitle

\section{Introduction}
\label{sec:intro}

For a closed, connected, oriented manifold $M$, the group $\Aut_\bullet(M)$ of based homotopy classes of orientation-preserving homotopy self-equivalences is related to surgery theory, to the $s$-cobordism classification of manifolds and to mapping class groups, and is hard to compute. Arkowitz \cite{arkowitz} surveys what is known for general spaces. Even for simply connected spaces no general structure theorem is available: Costoya and Viruel \cite{costoya-viruel} show that every finite group is the group of self-homotopy equivalences of infinitely many elliptic spaces, and that these spaces can be taken to be rationalizations of simply connected inflexible manifolds. Their passage from a Sullivan algebra to a manifold adjoins to the algebra a generator $y$ of degree $2n-1$ with $dy$ equal to the fundamental class \cite[Section~3]{costoya-viruel}. This is the algebraic model of an $S^{2n-1}$-bundle whose Euler class is the fundamental class. Costoya, Mu\~noz and Viruel \cite{cmv} use related constructions to realize prescribed sets of mapping degrees.

Products of spheres have been treated more concretely. Sieradski \cite{sieradski} considers the group $G(A\times B)$ of self-equivalences of a product of two connected $H$-spaces and obtains exact sequences $1\to[A\wedge B,A\times B]\to G(A\times B)\to GL_2(\Lambda)\to1$ under a vanishing hypothesis. His examples include $S^1\times S^n$ for $n=3,7$, where the group is $(\mathbb{Z}/2)^3$, and the products $S^3\times S^3$ and $S^7\times S^7$, where the kernel is $\mathbb{Z}/12\oplus\mathbb{Z}/12$ and $\mathbb{Z}/120\oplus\mathbb{Z}/120$ respectively. Related results for products are due to Booth and Heath \cite{booth-heath}, Heath \cite{heath} and Pave\v{s}i\'c \cite{pavesic}. For $S^m\times S^n$, Pave\v{s}i\'c \cite[Proposition~5.4]{pavesic} gives an exact sequence $0\to\operatorname{Coker}[\iota_m,-]\oplus\operatorname{Coker}[\iota_n,-]\to\mathcal{E}(S^m\times S^n)\to GL_2(\Lambda)$ whose left-hand group is the group of pinch-fold twists modulo Whitehead products that we use below, and Kahn \cite{kahn} treats self-equivalences of $(n-1)$-connected $2n$-manifolds. Sawashita \cite{sawashita} studies the group of self-equivalences of a product of two spheres. Crowley \cite{crowley} computes the extensions for the mapping class groups and for the homotopy equivalences of $\connsum_r(S^p\times S^p)$ with $p=3,7$. Baues \cite{baues} describes the group of homotopy equivalences of a manifold through a different set of exact sequences.

In dimension $4$, Hambleton and Kreck \cite{hk2} introduced a \emph{braid of interlocking exact sequences} that organizes the computation of $\Aut_\bullet(M)$ in terms of spin bordism groups, the Postnikov $2$-type $P_2(M)$, and the group $\Aut_\bullet(P_2(M))$. The framework has been applied to $4$-manifolds with $PD_2$ and free fundamental groups \cite{pamuk1,pamuk2} and to $s$-cobordism classifications \cite{hpr}. We extend the braid to higher dimensions. It does not require $M$ to be a product or an $H$-space.

\subsection*{The braid}

A closed oriented $m$-manifold $M$ is a \emph{$BO\langle n+1\rangle$-manifold} if its stable normal bundle $M\to BSO$ lifts to the $n$-connected cover $BO\langle n+1\rangle$ of $BSO$. For $n=2,3$ this means that $M$ is spin, and for $n=4,\dots,7$ that $M$ is string. Write $\B$ for $BO\langle n+1\rangle$, let $P=P_n(M)$ be the Postnikov $n$-type of $M$, let $c\colon M\to P$ be the canonical map, and let $X$ be the homotopy fibre of $c$. We assume that $\pi_1(M)$ is finite or infinite cyclic, and:
\begin{itemize}
\item[(T)] $\pi_1(P)$ acts trivially on $\pi_i(X)$ for all $i$.
\end{itemize}
Condition (T) holds when $M$ is simply connected, and when $c$ is a trivial fibration, as for $S^1\times S^{m-1}$ (Section~\ref{sec:braid}).

\begin{thma}[Theorem~\ref{thm:braid_exists}]
Let $M$ be a closed, connected, oriented $\B$-manifold of dimension $m=2n$ or $2n-1$, with $n\ge2$ and $m\ge4$, such that $\pi_1(M)$ is finite or infinite cyclic and (T) holds. Then the following diagram of exact sequences commutes, except that the two maps $\Omega^{\B}_{m+1}(P)\to\Aut_\bullet(M)$ agree only up to inversion:
\vspace{.5cm}
$$
\begin{matrix}
\xymatrix@!C@C-35pt{ \Om^{\B}_{m+1}(M) \ar[dr] \ar@/^2pc/[rr] &&
\htildeM \ar[dr] \ar@/^2pc/[rr] &&
\hept{P}\ar[dr]^{\beta} \\
&\Om^{\B}_{m+1}(P) \ar[dr] \ar[ur] &&
\hept{M}  \ar[dr]^{\alpha}\ar[ur] &&\Om^{\B}_m(P)\\
\pi_1(\heqpt{P}) \ar[ur] \ar@/_2pc/[rr] && \rOm^{\B}_{m+1}(P, M)
\ar[ur]^\gamma \ar@/_2pc/[rr]&& \rOm^{\B}_m(M)\ar[ur] }
\end{matrix}
$$
\vspace{1cm}

\noindent In particular there is an exact sequence of pointed sets
\[
  \pi_1(\heqpt{P})\to\widehat\Omega^{\B}_{m+1}(P,M)\xrightarrow{\ \gamma\ }\Aut_\bullet(M)\xrightarrow{\ \pi\ }\Aut_\bullet(P),
\]
and every element of the image of $\pi$ preserves the class $c_*[M]\in H_m(P;\mathbb{Z})$.
\end{thma}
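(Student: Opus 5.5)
Throughout, I write \(c\colon M\to P\) for the classifying map and \(\nu\colon M\to \B\) for the fixed normal \(\B\)-structure. I will follow the Hambleton--Kreck construction from the $4$-dimensional case \cite{hk2}, replacing spin by $\B$ and $P_2$ by $P=P_n(M)$. The work is to define each group and map and check exactness at every node. The three sequences that come for free are the long exact bordism sequences of the pair $(P,M)$ with normal $1$-type $\B\times P$, and the analogous sequence for $M$. Here $\widetilde{\cH}(M)$ denotes the group of degree-one normal maps (bordism classes over $\B\times P$ of homotopy equivalences $M\to M$ compatible with $c$ and $\nu$). The groups $\rOm$ denote the reduced bordism groups, obtained by quotienting out the image of $\Om^{\B}_{*}(\mathrm{pt})$.

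The definitions proceed as follows.
\begin{enumerate}
\item The map $\alpha\colon\hept M\to\rOm^{\B}_m(M)$ sends $f$ to $[M,f]-[M,\id]$, using the pullback of $\nu$ along $f$.
\item The map $\beta\colon\hept P\to\Om^{\B}_m(P)$ sends $h$ to $[M,h\circ c]-[M,c]$.
\item The map $\pi$ is induced by naturality of Postnikov sections: $f$ is sent to $P_n(f)$.
\item The map $\gamma$ is defined by surgery. Take an element of $\rOm^{\B}_{m+1}(P,M)$, represented by $(W;M,M')$ with a map to $P$ restricting to $c$ on $M$. Do surgery below the middle dimension to make $W\to P$ an $n$-equivalence relative to $M$. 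For $m=2n$ or $2n-1$, the dimension hypothesis together with $\B$-structures gives the vanishing of the relevant obstructions (as in Kreck's ``surgery and duality''). The result is an $h$-cobordism-like object whose two ends are both $n$-equivalent to $P$. Then $M\hookrightarrow W\hookleftarrow M'$ are homotopy equivalences, and one composes with a chosen identification $M'\simeq M$ to get an element of $\hept M$.
\end{enumerate}

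The key step, and the main obstacle, is showing that $\gamma$ is well defined and that its image is exactly $\ker\pi$. For this I will first prove that the inclusions of the ends of such a $W$ are homotopy equivalences. Condition (T) and the cyclic fundamental group are used here. One compares $M\to W$ on the fibres $X$ of $c$ and of $W\to P$, and uses a Serre spectral sequence/Whitehead argument. The trivial action from (T) means homology of the fibres with trivial-coefficient local systems suffices. Cyclic $\pi_1$ lets one handle the universal cover: it is finite, or it is $\mathbb Z$ with a Wang sequence. Kreck's modified surgery theory \cite{hk2} then shows that bordant choices give homotopic self-equivalences.

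Conversely, an element $f\in\ker\pi$ gives the mapping cylinder of $f$ with its map to $P$, since $P_n(f)\simeq\id$. This yields a preimage, which gives exactness at $\hept M$. Exactness at $\rOm^{\B}_{m+1}(P,M)$ will come from identifying the indeterminacy: a loop in $\heqpt P$ glued into $M\times I\to P$ gives a relative bordism element mapping to the identity.

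Commutativity of the triangles is direct from the definitions. For example, $\alpha\circ\gamma$ equals the boundary map, because $\partial W=M'-M$. The one exception is the two maps $\Om^{\B}_{m+1}(P)\to\hept M$. One of them runs through $\widetilde{\cH}(M)$, the other through $\rOm^{\B}_{m+1}(P,M)$ and $\gamma$. They differ by the orientation reversal of $W$, which swaps the roles of $M$ and $M'$. This is why they agree only up to inversion.

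Finally, the last claim: if $h=\pi(f)$, then $h_*c_*[M]=c_*f_*[M]=c_*[M]$, since $f$ has degree one.

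Exactness of the braid as pointed sets, rather than groups, is then formal, because $\gamma$ is only a set map onto a coset structure.
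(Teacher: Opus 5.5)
Your construction of $\gamma$, which is the heart of the theorem, does not work.

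\textbf{Surgery does not give an $h$-cobordism.} Surgery below the middle dimension on $W$ rel $\partial W$ only makes the pair $(W,\partial_1W)$ $(n-1)$-connected. It does not turn $W$ into an $h$-cobordism. For instance, take the interior connected sum with $S^n\times S^{m+1-n}$, which is stably parallelizable, null-bordant, and mapped constantly to $P$. This changes neither the class nor $\pi_j(W)$ for $j\le n-1$, and $W\to P$ is still an $n$-equivalence. Yet it creates a non-zero $H_n(W,\partial_1W)$, so the inclusion of $\partial_1W$ is not a homotopy equivalence. A Serre spectral sequence comparison with the fibre of $c$ cannot remove this, since the new classes appear precisely in $\pi_n$ of the fibre of $W\to P$. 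Invoking Kreck's theory would require a middle-dimensional obstruction to vanish, which you do not show. Moreover, the free end of a relative representative is an arbitrary closed $\B$-manifold $N$ with a degree-one map $g\colon N\to M$. Interior surgery cannot change $N$, so there is no identification $M'\simeq M$ to compose with.

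\textbf{The missing idea is to lift rather than invert inclusions.} After the low-dimensional surgery, obstruction theory for $c$ extends $g$ to a lift $r\colon W\to M$ of $F$. Here the fibre $X$ of $c$ is $n$-connected and (T) makes the coefficients untwisted. The obstructions lie in $H^{i+1}(W,N;\pi_i(X))\cong H_{m-i}(W,\partial_1W;\pi_i(X))$ with $i\ge n+1$, and these vanish since $m-i\le n-1$. Then $\gamma(\xi):=r|_{\partial_1W}$ has degree $\deg g=1$ and induces an isomorphism on $\pi_1$. It is a self-equivalence by the Noetherian (Hopfian) argument on $H_*(\widetilde M)$ for $\pi_1$ finite or infinite cyclic. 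That argument, together with finite generation of the kernels in the surgery step, is where the hypothesis on $\pi_1$ is really used. Independence of choices comes from the same lifting over an $(m+2)$-dimensional relative bordism; its restriction to the $M\times I$ part of the boundary is the required homotopy.

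Other steps also need repair.
\begin{itemize}
\item \emph{Definition of the hatted groups.} They are not reduced bordism groups: $\widehat\Omega^{\B}_m(M)$ is the kernel of the degree, and $\widehat\Omega^{\B}_{m+1}(P,M)=\partial^{-1}\widehat\Omega^{\B}_m(M)$. For a relative class whose boundary has degree $d\ne0$, the lifting construction produces a self-map of degree $d+1$. Such classes exist whenever a class of non-zero degree dies in $\Omega^{\B}_m(P)$, e.g.\ $S^1\times S^{m-1}=\partial(S^1\times D^m)$ with a product framing.
\item \emph{Structure in $\alpha$.} The source copy of $M$ carries the fixed structure $\nu$, not $f^*\nu$. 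The latter need not be a $\B$-structure on the normal bundle of $M$.
\item \emph{The sequences through $\widetilde{\mathcal H}(M)$.} These are not long exact bordism sequences. The one ending in $\Aut_\bullet(P)$ rests on the bijection $\widetilde{\mathcal H}(M)\to\widetilde{\mathcal H}(P)$, again proved by surgery, lifting and the degree-one argument. Its term $\pi_1(\mathcal E_\bullet(P))$ comes from loops at $c$ in $\mathrm{Map}_\bullet(M,P)\simeq\mathrm{Map}_\bullet(P,P)$.
\item \emph{Exactness at $\widehat\Omega^{\B}_{m+1}(P,M)$.} You show only that the image of $\pi_1(\mathcal E_\bullet(P))$ lies in $\gamma^{-1}(\id)$. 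The converse uses:
  \begin{itemize}
  \item $\alpha\circ\gamma=\partial$, to write $\xi=j(x)$ with $x\in\Omega^{\B}_{m+1}(P)$;
  \item the precise relation $\gamma(j(x))=\rho(x)^{-1}$, where $\rho$ is the composite $\Omega^{\B}_{m+1}(P)\to\widetilde{\mathcal H}(M)\to\Aut_\bullet(M)$; this comes from comparing lifts fixed at the two ends of $(M\times I)\# N$, not from an orientation reversal;
  \item exactness at $\widetilde{\mathcal H}(M)$;
  \item the fact that two classes in $\Omega^{\B}_{m+1}(P)$ have the same image in $\widetilde{\mathcal H}(M)$ exactly when their difference comes from $\pi_1(\mathcal E_\bullet(P))$.
  \end{itemize}
\end{itemize}
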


The group $\widehat\Omega^{\B}_{m}(M)$ is the subgroup of $\B$-bordism classes over $M$ whose reference map has degree zero, and $\widehat\Omega^{\B}_{m+1}(P,M)$ is the corresponding relative group (Section~\ref{sec:braid}). The set $\widetilde{\mathcal{H}}(M)$ consists of classes of $\B$-cobordisms from $M$ to $M$ that restrict to a self-equivalence at one end. The map $\gamma$ is defined by obstruction-theoretic lifting, and $\alpha$ and $\beta$ compare a self-equivalence with the identity in bordism. Thus $\ker\pi$ is the image of a bordism group, and the Atiyah--Hirzebruch spectral sequence bounds that group.

The proofs follow Hambleton and Kreck. The surgeries needed to make the maps to $P$ highly connected lie below the middle dimension, and the obstruction groups vanish for $m\in\{2n-1,2n\}$. The normal $\B$-structure need only exist; it need not factor through $P$.

\subsection*{Applications to spheres}

Section~\ref{sec:applications} applies the braid to products and connected sums of spheres. Several of these groups can also be computed by direct homotopy-theoretic arguments, and we use both methods. The kernels for products of two spheres are instances of the results of Pave\v{s}i\'c mentioned above. For $S^3\times S^3$ the braid gives an independent bound that matches the elementary computation.

\begin{thmb}[Theorem~\ref{splitting}]
For $M=S^1\times S^{m-1}$ with $m\ge4$ we have $\Aut_\bullet(M)\cong\mathbb{Z}/2\oplus\mathbb{Z}/2$. Including orientation-reversing equivalences, the group of all based self-equivalences is $(\mathbb{Z}/2)^3$.
\end{thmb}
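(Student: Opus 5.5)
We would compute $\Aut_\bullet(S^1\times S^{m-1})$ directly by homotopy theory, and use the braid of Theorem~A only as a consistency check on the kernel. Write $M=S^1\times S^{m-1}$, so $\pi_1(M)=\mathbb{Z}$ and the universal cover is $\mathbb{R}\times S^{m-1}\simeq S^{m-1}$. A based self-map $f$ induces $\pm1$ on $\pi_1=\mathbb{Z}$ and $\pm1$ on $\pi_{m-1}(M)=\mathbb{Z}[t^{\pm1}]$-module generator; more precisely $f$ induces a degree $\epsilon_1\epsilon_2$ on $H_m$, where $\epsilon_1$ is the effect on $H_1$ and $\epsilon_2$ is the effect on $H_{m-1}$. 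Thus we have a homomorphism
\[
\rho\colon \Aut(M)\to \{\pm1\}\times\{\pm1\},\qquad f\mapsto(\epsilon_1,\epsilon_2).
\]
It is surjective, realized by products of reflections $r\times\id$, $\id\times r'$, and the orientation-preserving equivalences are exactly $\rho^{-1}(\{(1,1),(-1,-1)\})$.

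\textbf{Step 1: the kernel of $\rho$.} Use the cell structure $M=(S^1\vee S^{m-1})\cup e^m$, with the top cell attached by the Whitehead product $[\iota_1,\iota_{m-1}]$. A based map $f$ with $\rho(f)=(1,1)$ is, on the $(m-1)$-skeleton, homotopic to the inclusion up to an element of $\pi_{m-1}(M)$ restricted to $S^{m-1}$ with trivial homology effect. Since $\pi_{m-1}(M)\cong\mathbb{Z}[t^{\pm1}]$ and the homology condition together with the extension condition over the top cell forces the $S^{m-1}$-component to be $t^k\iota$, the $\pi_1$-action is realized by homotopy along the circle, and these choices are absorbed. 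The remaining freedom is the extension over the top cell. Different extensions differ by the pinch action of $\pi_m(M)\cong\pi_m(S^{m-1})=\mathbb{Z}/2$ for $m\ge4$, the element being $\eta$. Standard obstruction theory via the Barcus--Barratt/Rutter sequence then gives
\[
\ker\rho\cong \pi_m(M)/\{\text{image of } [\,\iota_1,-]\text{ and } [\iota_{m-1},-]\text{-type Whitehead products}\}.
\]
We must check that $\eta$ survives, i.e.\ that the pinch map $M\to M\vee S^m\xrightarrow{\id\vee\eta}M$ is not homotopic to $\id$. The cleanest test is to compose with the projection $p\colon M\to S^{m-1}$. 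The composite $p\circ f_\eta$ differs from $p$ by $\eta\circ q$, where $q\colon M\to S^m$ is the collapse. Because the fibration $M\to S^1$ is trivial (condition (T) holds), the difference lies in $[M,S^{m-1}]$. We then argue via the Puppe sequence of $S^{m-1}\vee S^1\to M\to S^m$ that $\eta q\neq0$: its indeterminacy is the image of $\Sigma[\iota_1,\iota_{m-1}]^*$, and $\Sigma$ of a Whitehead product is null. Hence $q^*\colon\pi_m(S^{m-1})\to[M,S^{m-1}]$ is injective. This gives $\ker\rho\cong\mathbb{Z}/2$.

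\textbf{Step 2: the extension and splitting.} The reflections $r\times\id$ and $\id\times r'$ are involutions commuting with each other, and conjugation by them preserves the $\eta$-twist, since $\eta$ is fixed by $-1$ as it has order $2$. So $\rho$ splits and the extension is central. The full group is therefore $(\mathbb{Z}/2)^3$. Its orientation-preserving subgroup, generated by the $\eta$-twist and $r\times r'$, is $\mathbb{Z}/2\oplus\mathbb{Z}/2$.

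As a check, Theorem~A identifies $\ker\pi$ with the image of $\widehat\Omega^{\B}_{m+1}(P,M)$, which we would compare with $\pi_m(S^{m-1})$ via the Atiyah--Hirzebruch spectral sequence. The main obstacle is Step 1, specifically proving that the $\eta$-twist is nontrivial and is not absorbed by Whitehead-product indeterminacy involving $\iota_1$ (the $\pi_1$-action). The suspension argument above is what we would try first.
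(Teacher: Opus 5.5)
Your route differs from the paper's. The paper bounds $\ker\pi$ by $|\widehat\Omega^{\B}_{m+1}(S^1,M)|=2$, using that $\gamma$ maps onto $\ker\pi$, and detects the non-trivial element through $\alpha\circ\gamma=\partial$. You bound $\ker\rho$ by obstruction theory over the top cell, which is correct: if $f$ is the inclusion on $K=S^1\vee S^{m-1}$, then $f\simeq t_{\tilde\kappa}$ rel $K$ for a difference class $\tilde\kappa\in\pi_m(M)\cong\mathbb{Z}/2$. Your Step~2 is also fine, since a normal subgroup of order $2$ is central anyway.

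One slip: $\pi_{m-1}(M)\cong\mathbb{Z}$ with trivial $\pi_1$-action, as your own remark $\widetilde M\simeq S^{m-1}$ shows. The module $\mathbb{Z}[t^{\pm1}]$ is $\pi_{m-1}(K)$, and the top cell imposes $(t-1)\iota=0$. So $f|_K$ is determined up to based homotopy by $(\epsilon_1,\epsilon_2)$, and nothing needs to be ``absorbed''.

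The genuine gap is in the step you flagged. Injectivity of $q^*\colon\pi_m(S^{m-1})\to[M,S^{m-1}]_\bullet$ says only that $\pi_m(S^{m-1})$ acts freely at the constant map, i.e.\ $\eta\cdot *\neq *$. But $p\circ t_{\tilde\kappa}=\eta\cdot p$ is the coaction at $p$. The set $[M,S^{m-1}]_\bullet$ has no a priori group structure in which $\eta\cdot p=p+\eta q$. So ``the difference lies in $[M,S^{m-1}]$'' has no meaning, and $\eta q\neq0$ does not give $\eta\cdot p\neq p$. What you need is the stabilizer of $p$, and it can be non-zero even when $q^*$ is injective. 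For $S^4\times S^4$ the same Puppe argument shows $q^*$ is injective. Yet by Lemma~\ref{lem:orbit} the stabilizer of $\mathrm{pr}_2$ is $D_8=\langle E\nu'\eta_7\rangle\neq0$, and the corresponding twist is trivial (Theorem~\ref{thm:S4S4}).

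Here the gap closes in any of three ways.
\begin{enumerate}
\item Run the argument of Lemma~\ref{lem:orbit} with $S^1$ in place of $S^p$; it gives stabilizer $[\pi_2(S^{m-1}),\iota_{m-1}]=0$.
\item Use the cofibration $S^1\times *\to M\to S^1_+\wedge S^{m-1}\simeq S^{m-1}\vee S^m$. Together with $[S^1,S^{m-1}]_\bullet=[S^2,S^{m-1}]_\bullet=0$, it gives a bijection $[M,S^{m-1}]_\bullet\cong\pi_{m-1}(S^{m-1})\oplus\pi_m(S^{m-1})$. Taking the pinch disc in the top cell, this bijection carries $p$ to $(\iota,0)$ and $\eta\cdot p$ to $(\iota,\eta)$.
\item Stabilize. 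For $m\ge4$, $\eta\in\pi_m(S^{m-1})\cong\pi_1^s$ is stable, and after $\Sigma^\infty$ the coaction becomes addition: $\Sigma^\infty(\eta\cdot p)=\Sigma^\infty p+\eta\circ\Sigma^\infty q$. Since $\Sigma^\infty q$ has a section, this differs from $\Sigma^\infty p$.
\end{enumerate}
The third option is in essence the paper's detection $\alpha(t_{\tilde\kappa})=[S^m,j\eta^{m-3}]\neq0$.

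You should also say why $t_{\tilde\kappa}$ is a homotopy equivalence. It is the identity on $\pi_1$ and on $H_*(\widetilde M)$; alternatively, apply Lemma~\ref{lem:PD_upgrade}.
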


\begin{thmc}[Theorem~\ref{thm:S3xS3_split}]
For $M=S^3\times S^3$ there is a split short exact sequence
\[
  0\to(\mathbb{Z}/12)^2\xrightarrow{\ \gamma\ }\Aut_\bullet(S^3\times S^3)\xrightarrow{\ \pi\ }SL_2(\mathbb{Z})\to0,
\]
in which $SL_2(\mathbb{Z})$ acts on $(\mathbb{Z}/12)^2$ through its standard action modulo $12$. Hence $\Aut_\bullet(S^3\times S^3)\cong(\mathbb{Z}/12)^2\rtimes SL_2(\mathbb{Z})$. This group is not a direct product, and its abelianization is $\mathbb{Z}/12$. The relative spin bordism group $\widehat\Omega^{spin}_7(P,M)$ has order $144$ and $\gamma$ is a bijection onto the kernel.
\end{thmc}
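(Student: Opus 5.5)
We will prove Theorem C by combining the exact sequence of Theorem A, for $m=6$ and $n=3$, with an explicit construction of self-equivalences. For $M=S^3\times S^3$ we have $\B=BSpin$ in this range. The Postnikov $3$-type is $P=K(\mathbb{Z}^2,3)$, since $\pi_3(M)=\mathbb{Z}^2$ and $\pi_i(M)=0$ for $i\le 2$. The manifold $M$ is simply connected, so (T) holds.

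\emph{Step 1: the image of $\pi$.} We have $\Aut_\bullet(P)=GL_2(\mathbb{Z})$, acting on $H^3$. By Theorem A, the image of $\pi$ preserves $c_*[M]\in H_6(P;\mathbb{Z})$. Now $H^6(K(\mathbb{Z}^2,3))$ contains the cup product $x_1x_2$, and $A$ acts on it by $\det A$. Hence the image of $\pi$ lies in $SL_2(\mathbb{Z})$.

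Conversely, $M=S^3\times S^3$ with $S^3$ the unit quaternions, so every matrix $A=(a_{ij})$ is realized by
\[
(u,v)\mapsto (u^{a_{11}}v^{a_{12}},\,u^{a_{21}}v^{a_{22}}).
\]
These maps do not compose to a homomorphism on the nose. To get a splitting, I would instead use the generators $S=\mmatrix{0}{-1}{1}{0}$ and $T=\mmatrix{1}{1}{0}{1}$. These are realized by $(u,v)\mapsto(v^{-1},u)$ and $(u,v)\mapsto(uv,v)$. One then checks that the relations $S^4=1$ and $(ST)^3=S^2$ hold up to based homotopy in $\Aut_\bullet(M)$. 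This check happens inside a group whose kernel part is computed below, so any failure can be corrected by kernel elements. Alternatively, the splitting can be obtained through diffeomorphisms: $SL_2(\mathbb{Z})$ acts linearly on $T^2$-related models, or one uses Crowley's mapping class group results \cite{crowley}, which realize $SL_2(\mathbb{Z})$ by diffeomorphisms of $S^3\times S^3$ in a way compatible with composition.

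\emph{Step 2: the kernel.} By Theorem A, $\ker\pi=\gamma(\widehat\Omega^{spin}_7(P,M))$. We bound this group using the long exact sequence of the pair together with the Atiyah--Hirzebruch spectral sequence.
\begin{itemize}
\item $\Omega^{spin}_7(M)=\Omega_7^{spin}\oplus(\text{terms from } H_3,H_6)$. This reduces to $\Omega^{spin}_4\otimes H_3\oplus\ldots$, with $\Omega^{spin}_1\otimes H_6=\mathbb{Z}/2$.
\item For $P$, we need $H_*(K(\mathbb{Z},3))$ in low degrees: $H_5=\mathbb{Z}/2$, $H_7=\mathbb{Z}/3$, and $\mathbb{Z}/2$ pieces.
\end{itemize}
The goal is to show that $|\widehat\Omega^{spin}_7(P,M)|\le144$.

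For the lower bound, we construct the "pinch-fold" twists directly. Pinch $M\to M\vee S^6$, then map $S^6\to S^3\times S^3$ by an element of $\pi_6(S^3)\oplus\pi_6(S^3)=(\mathbb{Z}/12)^2$. This gives elements of $\ker\pi$. Modulo Whitehead products with $\iota_3$, these elements are distinct: $[\iota_3,\iota_3]=0$ because $S^3$ is an $H$-space, so nothing is killed. Pave\v{s}i\'c's sequence \cite[Proposition~5.4]{pavesic} then says that they inject. Hence $|\ker\pi|\ge144$. Comparing with the upper bound shows that $\widehat\Omega^{spin}_7(P,M)$ has order exactly $144$ and that $\gamma$ is a bijection onto the kernel. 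The kernel acquires its group structure from $(\mathbb{Z}/12)^2$ via the pinch construction, since composition of pinch maps adds.

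\emph{Step 3: the action and consequences.} Conjugating a pinch twist by the lift of $A$ precomposes the element $\alpha\colon S^6\to S^3\times S^3$ with $\deg=1$ and postcomposes with $A$. On $\pi_6(S^3\times S^3)=\pi_6(S^3)^2$, postcomposition by the map $(u,v)\mapsto(u^av^b,\dots)$ acts by the matrix entries. This uses the fact that in $\pi_6(S^3)$ we have $(k\iota)\circ\eta'=k\,\eta'$ because $S^3$ is an $H$-space; the $k^2$ correction terms involve $[\iota,\iota]=0$. So the action is the standard one mod $12$.

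The abelianization is $\mathbb{Z}/12$: the abelianization of $SL_2(\mathbb{Z})$ is $\mathbb{Z}/12$, and the coinvariants $(\mathbb{Z}/12)^2/\langle Av-v\rangle=0$, because $T-1$ and $S-1$ generate everything. Non-triviality of the action shows that the group is not a direct product.

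The main obstacle is the upper bound $144$ from the spectral sequence. Controlling the differentials and extensions for $K(\mathbb{Z}^2,3)$ in degree $7$, and the $\mathbb{Z}/2$ and $\mathbb{Z}/3$ torsion (the $\mathbb{Z}/3$ must combine with $\mathbb{Z}/4$ to give $\mathbb{Z}/12$), is delicate. If that fails, I would fall back on computing $\ker\pi$ purely homotopically via Pave\v{s}i\'c's exact sequence, and deduce the order of the bordism group from the bijectivity of $\gamma$ plus the first map $\pi_1(\heqpt{P})\to\widehat\Omega$ being trivial.
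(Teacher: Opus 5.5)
\textbf{The splitting is not proved.} Most of your outline is sound: the image via the quaternionic maps, the kernel via Pave\v{s}i\'c, the action, and the vanishing of the coinvariants. The failure is in Step~1.

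The sentence ``any failure can be corrected by kernel elements'' \emph{is} the assertion that the extension splits, and it does not hold automatically. The obstruction is a class in $H^2(SL_2(\mathbb{Z});V)$, where $V=(\mathbb{Z}/12)^2$ carries the standard action, and this group is non-zero. In the Mayer--Vietoris sequence for $SL_2(\mathbb{Z})=\mathbb{Z}/4*_{\mathbb{Z}/2}\mathbb{Z}/6$, the centre $-I$ acts by $-1$, so $H^1(\mathbb{Z}/2;V)=V/2V\cong(\mathbb{Z}/2)^2$. This group receives only $H^1(\mathbb{Z}/4;V)\oplus H^1(\mathbb{Z}/6;V)\cong\mathbb{Z}/2\oplus 0$, so a non-trivial cokernel injects into $H^2$.

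Your lifts also genuinely fail. Work in $N=[M,S^3]_\bullet$, with $x,y$ the projections and $c=[x,y]$. The composite $S_0T_0(u,v)=(v^{-1},uv)$ induces $x\mapsto y^{-1}$, $y\mapsto xy$. Its cube is $x\mapsto c^{-1}x^{-1}$, $y\mapsto c\,y^{-1}$. By contrast, $S_0^2$ induces $x\mapsto x^{-1}$, $y\mapsto y^{-1}$. Since $c\neq 1$, we get $(S_0T_0)^3\neq S_0^2$.

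What is missing is a setting in which the correction can be computed. The paper identifies $\Aut_\bullet(M)$ with $\Aut^+(N)$. Here $N$ is the class-$2$ central extension of $\mathbb{Z}^2$ by $\langle c\rangle\cong\mathbb{Z}/12$, with $c=q^*\langle\iota_3,\iota_3\rangle$ of order $12$ by Bott. The paper then checks by hand that $\tilde S\colon x\mapsto y,\ y\mapsto x^{-1}c$ and $\tilde U\colon x\mapsto y,\ y\mapsto x^{-1}yc$ satisfy $\tilde S^4=1$ and $\tilde U^3=\tilde S^2$. Your alternatives do not replace this. ``$T^2$-related models'' give no self-maps of $S^3\times S^3$, and an appeal to Crowley only imports the splitting as a black box.

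\textbf{The bound on the bordism group is not proved.} The claims that $\widehat\Omega^{spin}_7(P,M)$ has order $144$ and that $\gamma$ is a bijection depend entirely on the upper bound, which you leave open. The difficulties you name are not the relevant ones. Since $\gamma$ is only a map of pointed sets, only the orders of the $E^\infty$-terms matter, so extension problems such as ``$\mathbb{Z}/3$ combining with $\mathbb{Z}/4$'' play no role. Three facts are needed.
\begin{enumerate}
\item $\widehat\Omega^{spin}_6(M)=0$, so the relative group is $\operatorname{coker}\bigl(c_*\colon\Omega^{spin}_7(M)\to\Omega^{spin}_7(P)\bigr)$.
\item The term $E^2_{3,4}=\mathbb{Z}^2$ contributes nothing. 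Because $E^2_{4,3}=0$, we have $F_4=F_3$, which is the image of $\Omega^{spin}_7(S^3\vee S^3)$. Since $S^3\vee S^3=P^{(3)}\subset M$, this image lies in $\operatorname{im}c_*$.
\item The term $E^2_{6,1}=H_6(P;\mathbb{Z}/2)\cong(\mathbb{Z}/2)^3$ is cut down to order $4$ by $d_2\colon E^2_{8,0}\to E^2_{6,1}$. This differential is dual to $\mathrm{Sq}^2$ on $H^6(P;\mathbb{Z}/2)$ and has rank one, because $\mathrm{Sq}^2(\iota_1\iota_2)\neq 0$ while $\mathrm{Sq}^2(\iota_a^2)=0$.
\end{enumerate}
Without the filtration argument in (2) the bound is infinite, and without the $d_2$ in (3) it is $288$.

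Your fallback also fails. Exactness of pointed sets, together with the triviality of the map from $\pi_1(\mathcal{E}_\bullet(P))$, only gives $\gamma^{-1}(\id)=\{0\}$. It does not make $\gamma$ injective, and the bijectivity of $\gamma$ is exactly what has to be proved.

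Finally, a non-trivial action alone does not rule out an abstract isomorphism with $(\mathbb{Z}/12)^2\times SL_2(\mathbb{Z})$. Compare your abelianization $\mathbb{Z}/12$ with the $(\mathbb{Z}/12)^3$ of the direct product instead.
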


The kernel agrees with the one in Sieradski's exact sequence \cite[Example~3]{sieradski}. The action of $SL_2(\mathbb{Z})$ on it and the splitting are obtained by identifying $\Aut_\bullet(S^3\times S^3)$ with a group of automorphisms of $[S^3\times S^3,S^3]_*$.

\begin{thmd}[Theorems~\ref{thm:S3S4} and~\ref{thm:S4S4}]
$\Aut_\bullet(S^3\times S^4)\cong(\mathbb{Z}/2\oplus\mathbb{Z}/24)\rtimes(\mathbb{Z}/2)^2$ is a group of order $192$, and $\Aut_\bullet(S^4\times S^4)\cong D_4\times\mathbb{Z}/2$ is a group of order $16$, where $D_4$ is the dihedral group of order $8$. In both cases the kernel of $\pi$ is the group of pinch-fold twists, which is computed from the Whitehead products in the homotopy groups of the factors.
\end{thmd}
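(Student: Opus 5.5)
We compute both groups from the homotopy-theoretic sequence that the braid of Theorem A gives for simply connected products of spheres. The kernel of $\pi$ is identified with the group of pinch-fold twists, and the image is identified inside the automorphisms of homology.

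For $M=S^p\times S^q$ with $p\le q$, collapsing the $(p+q-1)$-skeleton $S^p\vee S^q$ gives a pinch map $M\to M\vee S^{p+q}$. Given $(a,b)\in\pi_{p+q}(S^p)\oplus\pi_{p+q}(S^q)$, composing the pinch with $\id\vee(i_1a+i_2b)$ and folding yields a self-equivalence $\tau_{a,b}$ which induces the identity on homology. Obstruction theory on the top cell shows that every homology-trivial self-equivalence is homotopic to some $\tau_{a,b}$. Moreover, $\tau_{a,b}\simeq\id$ exactly when $(a,b)$ lies in the image of the Whitehead products, namely $\{[\iota_p,x]\}\oplus\{[\iota_q,y]\}$ with $x\in\pi_{q+1}(S^p)$ and $y\in\pi_{p+1}(S^q)$. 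This is the indeterminacy of the homotopy extension over the top cell, and it is Pave\v{s}i\'c's sequence. In the braid this group is $\gamma(\widehat\Omega_{m+1}(P,M))$. The two computations are as follows.

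\textbf{$S^3\times S^4$.} We have $\pi_7(S^3)=\mathbb{Z}/2$ and $\pi_7(S^4)=\mathbb{Z}\oplus\mathbb{Z}/12$. The Whitehead product $[\iota_3,\pi_5(S^3)]$ vanishes, since $S^3$ is an $H$-space. The relevant group $[\iota_4,\pi_4(S^4)]$ is generated by $[\iota_4,\iota_4]=2\nu_4-\Sigma\nu'$. Modulo it, $\pi_7(S^4)$ becomes $\mathbb{Z}/24$, generated by the class of $\nu_4$. Hence the kernel is $\mathbb{Z}/2\oplus\mathbb{Z}/24$, of order $48$. The image of $\pi$ consists of the orientation-preserving automorphisms of $H_3\oplus H_4=\mathbb{Z}\oplus\mathbb{Z}$, i.e.\ the diagonal sign pairs $(\epsilon_3,\epsilon_4)$ with $\epsilon_3\epsilon_4=1$. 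This is only $\mathbb{Z}/2$, so to obtain the stated quotient $(\mathbb{Z}/2)^2$ the image must be identified differently. $\Aut_\bullet(P)$ contains extra automorphisms coming from the $k$-invariant and from $\pi_4(S^3)=\mathbb{Z}/2$: the maps $S^4\to S^3$ give self-maps $(x,y)\mapsto(x\cdot\eta(y),y)$, which are homology-trivial but not pinch-fold twists. So the quotient $(\mathbb{Z}/2)^2$ is generated by the sign pair $(-1,-1)$ together with the shear by $\eta_3\colon S^4\to S^3$, which uses the group structure of $S^3$. I would realise both generators by explicit maps, giving a splitting. I would then compute the action on the kernel: $(-1,-1)$ acts on $\pi_7(S^4)/[\iota_4,\iota_4]$ through $(-\iota_4)\circ\nu_4\circ(-\iota_7)$, and the shear mixes the factors via composition with $\eta$. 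That produces the semidirect product of order $48\cdot4=192$.

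\textbf{$S^4\times S^4$.} We have $\pi_8(S^4)=(\mathbb{Z}/2)^2$, generated by $\nu_4\eta_7$ and $\Sigma\nu'\eta_7$. The Whitehead products $[\iota_4,\eta_4]$ span a $\mathbb{Z}/2$, so each summand of the kernel is $\mathbb{Z}/2$ and the kernel is $(\mathbb{Z}/2)^2$. The image of $\pi$ lies in the orientation-preserving automorphisms of $H_4$ that preserve the intersection form $\mathrm{diag}$-hyperbolic, namely $\{\pm\id,\pm\text{swap}\}$. It must also satisfy the Whitehead square obstruction: $[\iota_4,\iota_4]$ has Hopf invariant $2$, and realisability of $(x,y)\mapsto(ax+by,cx+dy)$ requires $ab[\iota_4,\iota_4]$-type terms to vanish. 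This leaves exactly the swap and $-\id$, so the image is $(\mathbb{Z}/2)^2$. The total group therefore has order $16$. The swap acts on the kernel by exchanging the two summands, and $-\id$ acts trivially. Checking whether the resulting extension is $D_4\times\mathbb{Z}/2$ requires computing the square of a lift of the swap composed with a twist, which I would do directly on the top cell.

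The main obstacle is the $S^3\times S^4$ image and extension: pinning down precisely which shears exist, and the nontrivial action on the $\mathbb{Z}/24$ and $\mathbb{Z}/2$ summands. I would first try Toda's composition relations ($\eta_3\nu_4$, $\nu'\eta_6$) for this.
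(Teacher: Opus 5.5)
Your kernel computations follow the paper's route: pinch-fold twists modulo the stabilizers $[\pi_{q+1}(S^p),\iota_p]$ and $[\pi_{p+1}(S^q),\iota_q]$ of Lemma~\ref{lem:orbit}. They give the right groups $\mathbb{Z}/2\oplus\mathbb{Z}/24$ and $(\mathbb{Z}/2)^2$. For $S^3\times S^4$, however, the rest of the argument has two genuine gaps.

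The first gap is the criterion you start from, that every homology-trivial self-equivalence is a twist. It is false here: the shear $\sigma(x,y)=(x\cdot\eta_3(y),y)$ is homology-trivial and is not a twist, as you notice yourself. What is needed is an invariant of the restriction to $S^3\vee S^4$. Write $\mathrm{pr}_1fj_1=e\iota_3$ and $\mathrm{pr}_2fj_2=e\iota_4$ (the signs agree because $\deg f=1$), and $\mathrm{pr}_1fj_2=a\eta_3\in\pi_4(S^3)$. Then:
\begin{itemize}
\item check that $f\mapsto(e,a)\in\{\pm1\}\times\mathbb{Z}/2$ is a homomorphism;
\item show that $\pi(f)$ depends only on $(e,a)$, since a self-equivalence of $P$ is determined by its action on $\pi_3$ and $\pi_4$, because $H^4(K(\mathbb{Z},3);\pi_4)=0$.
\end{itemize}
This identifies $\ker\pi$ with the maps restricting to the inclusion, hence with the twists, and gives the upper bound $|\operatorname{im}\pi|\le4$. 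You instead reach $(\mathbb{Z}/2)^2$ by matching the stated answer, so the order $192$ is not actually derived.

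The second gap is the splitting. Realising the two generators by explicit maps does not give a splitting: the lifts must be involutions that commute, and the obvious ones do not. For $s(x,y)=(\bar x,r(y))$ one computes
$s\sigma s^{-1}\sigma^{-1}(x,y)=(\eta_3(ry)^{-1}\,x\,\eta_3(y)^{-1},y)\simeq(\eta_3(y)\,x\,\eta_3(y)^{-1},y)$.
This is the twist by $j_1\langle\iota_3,\eta_3\rangle$, and $\langle\iota_3,\eta_3\rangle=\langle\iota_3,\iota_3\rangle\circ\eta_6=\nu'\eta_6\neq0$. So $\langle s,\sigma\rangle$ is dihedral of order $8$, not a complement.

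The missing step is the paper's correction. The commutator has the form $(a_0,0)$ with $a_0\neq0$, and one replaces $s$ by $s'=s\,t_{j_2\nu_4}$. Then $s'^2=s_*(k)\,k=1$, because $s$ acts by $-1$ on $Q$. Also $[s',\sigma]=(\varepsilon(\nu_4)+a_0,0)=0$, because $\varepsilon(\nu_4)=\eta_3\circ\nu_4=\nu'\eta_6\neq0$.

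Relatedly, your formula $(-\iota_4)\circ\nu_4\circ(-\iota_7)$ for the action of the sign pair has a spurious $(-\iota_7)$. Since $s$ has degree $+1$, Lemma~\ref{lem:coaction}(c) says it acts on $Q$ by $b\mapsto r\circ b\equiv-b$. Your expression is $\equiv\nu_4$ modulo $D$, which would make this action trivial. That yields a different group, with abelianization of order $96$ instead of $8$.

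For $S^4\times S^4$ the step you leave open is immediate. The maps $\rho=r\times r$ and $\mathrm{sw}$ are strictly commuting involutive diffeomorphisms with $\pi(\rho)=-I$ and $\pi(\mathrm{sw})=\sigma_0$, so they split the sequence. Your description of the action then gives $(\mathbb{Z}/2\wr\mathbb{Z}/2)\times\mathbb{Z}/2\cong D_4\times\mathbb{Z}/2$.
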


\begin{thmf}[Theorem~\ref{thm:SpSp}]
For $M=S^p\times S^p$ with $p\ge2$, the kernel of $\pi$ is $Q_p\oplus Q_p$ with $Q_p=\pi_{2p}(S^p)/[\pi_{p+1}(S^p),\iota_p]$, and the image of $\pi$ is the group of isometries $A$ of the intersection form with $ab[\iota_p,\iota_p]=cd[\iota_p,\iota_p]=0$. In particular $\Aut_\bullet(S^5\times S^5)$ is the theta group $\Gamma_\theta\subset SL_2(\mathbb{Z})$, and $\Aut_\bullet(S^7\times S^7)$ is an extension of $SL_2(\mathbb{Z})$ by $(\mathbb{Z}/120)^2$.
\end{thmf}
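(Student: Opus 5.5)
We would prove this by combining the exact sequence of Theorem~A with a direct cell-by-cell analysis of $M=S^p\times S^p=(S^p\vee S^p)\cup_{w}e^{2p}$, where $w=[\iota_1,\iota_2]$ is the Whitehead product of the two inclusions. For $m=2p$ we take $n=p$. The Postnikov $p$-type $P$ has $\pi_p(P)=\mathbb{Z}^2$, so $\Aut_\bullet(P)$ maps to $GL_2(\mathbb{Z})$ through the action on $H_p$. The image of $\pi$ must preserve $c_*[M]$, which by Theorem~A means preserving orientation and the cup-product pairing. So the image of $\pi$ lies in the group of isometries $A=\left(\begin{smallmatrix}a&b\\c&d\end{smallmatrix}\right)$ of the intersection form. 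This group is hyperbolic or skew according to the parity of $p$, and in the skew case it is all of $SL_2(\mathbb{Z})$. It remains to decide which isometries are realised and to compute $\ker\pi$.

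\emph{Image.} Given $A$, we define $f_0\colon S^p\vee S^p\to S^p\times S^p$ sending $\iota_1\mapsto a\iota_1+c\iota_2$ and $\iota_2\mapsto b\iota_1+d\iota_2$, viewed as elements of $\pi_p(S^p\times S^p)=\pi_p(S^p)^2$. The map $f_0$ extends over the top cell if and only if $f_{0*}w=0$ in $\pi_{2p-1}(S^p\times S^p)=\pi_{2p-1}(S^p)^2$. Using bilinearity of the Whitehead product, together with $[k\iota,l\iota]=kl[\iota,\iota]$, we expand $f_{0*}w=[a\iota_1+c\iota_2,\,b\iota_1+d\iota_2]$. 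Projected to the two factors, the terms involving $[\iota_1,\iota_2]$ die, and the two coordinates of the obstruction are $ab[\iota_p,\iota_p]$ and $cd[\iota_p,\iota_p]$. Hence $A$ lifts to a self-equivalence exactly when $ab[\iota_p,\iota_p]=cd[\iota_p,\iota_p]=0$. Conversely, any lift induces an isomorphism on $H_p$, and by Theorem~A it preserves $c_*[M]$, so it is an isometry.

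\emph{Kernel.} An element of $\ker\pi$ is a based self-map that is homotopic to the identity on the $p$-skeleton. By the standard pinch-fold argument, it is the identity composed with the pinch $M\to M\vee S^{2p}$ and an element of $\pi_{2p}(M)=\pi_{2p}(S^p)^2$. Two such twists agree if and only if they differ by the image of the Barcus--Barratt/Whitehead-product map, which is $[\pi_{p+1}(S^p),\iota_p]$ in each factor. This is exactly Pave\v{s}i\'c's sequence \cite[Proposition~5.4]{pavesic}, and it yields $\ker\pi\cong Q_p\oplus Q_p$. We would cross-check this against Theorem~A: $\gamma$ maps $\widehat\Omega_{2p+1}(P,M)$ onto $\ker\pi$, and the Atiyah--Hirzebruch bound from Theorem~A should be consistent with $Q_p\oplus Q_p$.

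\emph{Examples.} For $p=5$ the class $[\iota_5,\iota_5]=\nu_5\eta_8$ is nonzero of order $2$, so the realisable isometries are the matrices in $SL_2(\mathbb{Z})$ with $ab$ and $cd$ even, which is $\Gamma_\theta$. Moreover $\pi_{10}(S^5)=\mathbb{Z}/2$ is generated by $[\iota_5,\eta_5]=[\iota_5,\iota_5]\eta_9=\nu_5\eta_8\eta_9$, so $Q_5=0$ and $\pi$ is injective. For $p=7$, $S^7$ is an $H$-space, so all Whitehead products with $\iota_7$ vanish. Then the image of $\pi$ is all of $SL_2(\mathbb{Z})$ and $Q_7=\pi_{14}(S^7)=\mathbb{Z}/120$.

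We expect the main obstacle to be the kernel step: showing that the indeterminacy of pinch-fold twists is exactly $[\pi_{p+1}(S^p),\iota_p]$ in each factor, with no cross terms. The risk is that cross terms such as $[\pi_{p+1}(S^p),\iota_2]$ could also contribute. We would handle this by projecting to the two factors, on which these cross terms vanish, and by invoking Pave\v{s}i\'c's result.
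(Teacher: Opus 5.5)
Your proposal is correct. The kernel follows the paper's route: the paper also writes $f=(f_1,f_2)$, applies Lemma~\ref{lem:orbit} to each projection to get $f_i=b_i\cdot\mathrm{pr}_i$ with stabilizer $[\pi_{p+1}(S^p),\iota_p]$, and so rules out cross terms exactly as you propose. The one difference is that it gets the group structure from Lemma~\ref{lem:coaction}(b) rather than from Pave\v{s}i\'c. The examples $p=5$ and $p=7$ are handled exactly as in Corollary~\ref{cor:S5S5S7S7}.

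For the image you take a different and somewhat more direct route.
\begin{itemize}
\item The paper makes $f$ cellular, so $f|_K=g_A$ is a self-map of $K=S^p\vee S^p$. It then uses Blakers--Massey to identify the kernel of $\pi_{2p-1}(K)\to\pi_{2p-1}(M)$ with $\mathbb{Z}w$. Comparing components of $g_A\circ w=kw$ in $\pi_{2p-1}(K)=\pi_{2p-1}(S^p)^2\oplus\mathbb{Z}w$ gives, in one computation, the two Whitehead-square conditions and the degree condition $ad+(-1)^pbc=1$.
\item You evaluate the obstruction $f_0\circ w$ in $\pi_{2p-1}(M)=\pi_{2p-1}(S^p)^2$ instead. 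There Whitehead products are computed componentwise, so the mixed term dies automatically and no connectivity estimate or cellular approximation is needed.
\end{itemize}
The price of your route is that the degree condition no longer appears in the obstruction and must come from restricting to isometries. To close the argument, say explicitly that any extension $f$ of $f_0$ has degree $ad+(-1)^pbc$, by naturality of cup products and $x_i^2=0$. This is $1$ for an isometry, so $f$ is a homology isomorphism of simply connected spaces and hence a self-equivalence. Your write-up only asserts that ``$A$ lifts to a self-equivalence''; the paper records this step in its converse.
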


\begin{thme}[Theorem~\ref{thm:Mr_kernel}]
For $M_r=\connsum_r(S^3\times S^3)$ with $r\ge1$, the image of $\pi\colon\Aut_\bullet(M_r)\to\Aut_\bullet(P)$ is $Sp_{2r}(\mathbb{Z})$ and the kernel is $(\mathbb{Z}/12)^{2r}$, generated by twists supported on single spheres. For $r\ge2$ the diffeomorphisms that permute the summands act on the kernel by permuting the $r$ pairs of generators, so the conjugation action of $\Aut_\bullet(M_r)$ on the kernel is not trivial. By Crowley \cite{crowley} the resulting extension of $Sp_{2r}(\mathbb{Z})$ by the kernel does not split for $r\ge2$.
\end{thme}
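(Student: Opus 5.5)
The plan has four parts: identify $P$, show the image of $\pi$ is $Sp_{2r}(\mathbb{Z})$, compute the kernel by homotopy theory and compare it with the braid, and finally determine the permutation action. Throughout, $M_r$ is $2$-connected of dimension $6$, so $n=3$ and $P=P_3(M_r)=K(\mathbb{Z}^{2r},3)$. This gives $\Aut_\bullet(P)=GL_{2r}(\mathbb{Z})$, acting on $H^3(M_r)\cong\mathbb{Z}^{2r}$. Since $M_r$ is simply connected, condition (T) holds, and $M_r$ is spin, hence a $BO\langle 4\rangle$-manifold; so Theorem~\ref{thm:braid_exists} applies.

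\textbf{Image of $\pi$.} By Theorem~A, any element of the image preserves $c_*[M_r]$. Through cup product, this forces it to preserve the intersection form on $H^3$, which is the standard symplectic form. Hence the image lies in $Sp_{2r}(\mathbb{Z})$. For the reverse inclusion, I realize generators of $Sp_{2r}(\mathbb{Z})$ by diffeomorphisms:
\begin{itemize}
\item the elements of $SL_2(\mathbb{Z})=Sp_2(\mathbb{Z})$ on a single summand, realized as in Theorem~\ref{thm:S3xS3_split} by linear maps of $S^3\times S^3$ fixing a disc (up to isotopy);
\item permutations of the summands;
\item a "handle slide" transvection $e_i\mapsto e_i+e_j$, $f_j\mapsto f_j-f_i$, realized by the standard diffeomorphism of $(S^3\times S^3)\connsum(S^3\times S^3)$, as in Wall's/Kreck's description of the mapping class group.
\end{itemize}
These generate $Sp_{2r}(\mathbb{Z})$, so the image is exactly $Sp_{2r}(\mathbb{Z})$.

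\textbf{Kernel of $\pi$.} Here I work directly with homotopy theory. Write $M_r=W\cup_\phi D^6$ with $W=\bigvee_{2r}S^3$. Any $f\in\ker\pi$ is homotopic to the identity on $W$, so it differs from the identity by a pinch map
\[
M_r\to M_r\vee S^6\xrightarrow{\ \id\vee\xi\ } M_r,\qquad \xi\in\pi_6(M_r).
\]
The group $\pi_6(W)$ is generated by $\pi_6(S^3)^{2r}\cong(\mathbb{Z}/12)^{2r}$ together with the Whitehead products $[\iota_i,\iota_j]$. The twists differing by $\xi$ and $\xi'$ are homotopic exactly when $\xi-\xi'$ lies in the subgroup generated by Whitehead products with the $\iota$'s and by the attaching class (the pinch-fold indeterminacy of Pave\v{s}i\'c). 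This leaves a quotient of $(\mathbb{Z}/12)^{2r}$, coming from twists supported on single spheres.

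To see that this quotient is all of $(\mathbb{Z}/12)^{2r}$, I give detecting invariants. For each summand $i$, compose $f$ with the collapse $M_r\to S^3\times S^3$ onto that summand, then with the projections to $S^3$. This yields elements of $[S^3\times S^3,S^3]_*$ and reduces the question to the $r=1$ computation in Theorem~\ref{thm:S3xS3_split}, which shows the $\mathbb{Z}/12$'s survive. As a consistency check, the braid gives $|\ker\pi|\le|\widehat\Omega^{spin}_7(P,M_r)|$, which the Atiyah--Hirzebruch spectral sequence bounds by $12^{2r}$, as in the $r=1$ case. I expect the main obstacle to be the injectivity step: showing that no combination of Whitehead products $[\iota_i,\iota_j]\circ\eta$ with $i\ne j$ kills a single-sphere twist. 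I would handle it first via the collapse maps to individual summands, which annihilate the cross terms.

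\textbf{Permutation action.} A diffeomorphism $\sigma$ permuting the summands conjugates a twist supported on the $i$-th sphere to one supported on the $\sigma(i)$-th sphere, since $\sigma$ carries the pinch construction along. The conjugation action therefore permutes the $r$ pairs of generators, which is nontrivial for $r\ge2$. The non-splitting for $r\ge2$ is quoted from Crowley \cite{crowley}.
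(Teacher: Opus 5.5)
Your skeleton matches the paper: image, then kernel via single-sphere twists, then the permutation action via Lemma~\ref{lem:coaction}(c). Realizing generators of $Sp_{2r}(\mathbb{Z})$ by diffeomorphisms is a valid alternative to the paper's argument, which extends $f_0\colon\bigvee_{2r}S^3\to\bigvee_{2r}S^3$ over the top cell using $FJF^T=J$ and bilinearity of Whitehead products. One correction there: Theorem~\ref{thm:S3xS3_split} produces homotopy equivalences through $N$, not diffeomorphisms fixing a disc. On the summands you need honest diffeomorphisms such as $(x,y)\mapsto(xy,y)$ and $(x,y)\mapsto(y,x^{-1})$.

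The gap is in the kernel, and it has two parts. First, the detection step fails as written. Composing $f$ with the collapse $p_i\colon M_r\to S^3\times S^3$ and a projection gives an element of $[M_r,S^3]_\bullet$, not of $[S^3\times S^3,S^3]_\bullet$. The $r=1$ computation also does not transfer: with $q_1\colon S^3\times S^3\to S^6$ the pinch map, $q\simeq q_1\circ p_i$ gives $q^*=p_i^*\circ q_1^*$. So injectivity of $q^*$ for $M_r$ implies it for $S^3\times S^3$, not conversely. The missing ingredient is that $q^*\colon\pi_6(S^3)\to[M_r,S^3]_\bullet$ is itself injective. This follows from the cofibre sequence $S^5\xrightarrow{w}\bigvee_{2r}S^3\to M_r\xrightarrow{q}S^6\xrightarrow{\Sigma w}\bigvee_{2r}S^4$ together with $\Sigma w\simeq0$. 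For the product $F_a$ of single-sphere twists with exponents $a_{i,j}$, Lemma~\ref{lem:coaction}(a) then gives $y_i\circ F_a=y_i\cdot q^*(a_{i,2}\varpi)$ and $x_i\circ F_a=x_i\cdot q^*(a_{i,1}\varpi)$, using the group structure of $S^3$. This yields injectivity of $(\mathbb{Z}/12)^{2r}\to\ker\pi$ at once, with no need to analyse cross terms. So injectivity is not the hard step you expected.

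Second, the step that is actually delicate in your route is the other direction: that twists by the classes $[\iota_a,\iota_b]\circ\eta_5$ are trivial, so that $\ker\pi$ is generated by single-sphere twists. You assert this by appeal to Pave\v{s}i\'c, but his indeterminacy is computed for products $S^m\times S^n$, not connected sums. For $M_r$ you would need an analogue of Lemma~\ref{lem:orbit}: identify the stabilizer of $\id$ under the $\pi_6(M_r)$-coaction with the image of $\pi_1(\mathrm{Map}_\bullet(\bigvee_{2r}S^3,M_r))$, and compute that boundary map by Whitehead products. You do not supply this.

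The paper avoids this entirely. By Corollary~\ref{im}, $\gamma$ maps $\widehat\Omega^{\mathrm{spin}}_7(P,M_r)$ onto $\ker\pi$. The Atiyah--Hirzebruch bound $|E^2_{5,2}|\cdot|E^3_{6,1}|\cdot|E^2_{7,0}|=2^{2r}\cdot2^{2r}\cdot3^{2r}$ then gives $|\ker\pi|\le144^r$. The key points in that bound are that $d_2\colon E^2_{8,0}\to E^2_{6,1}$ has rank $\binom{2r}{2}$ and that $F_3=F_4$ lies in the image of $c_*$. What you list as a consistency check is therefore the paper's actual upper bound. Carried out in detail and combined with the corrected detection, it gives $\ker\pi=K_r^0\cong(\mathbb{Z}/12)^{2r}$ and bijectivity of $\gamma$ without any cross-term analysis.
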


\medskip
All manifolds are smooth, closed, connected and oriented unless stated otherwise, and maps between pointed spaces preserve basepoints. The group $\Aut_\bullet(M)$ consists of orientation-preserving equivalences unless stated otherwise. For based spaces $X$ and $Y$, $[X,Y]_\bullet$ denotes the set of based homotopy classes of based maps $X\to Y$. Abelian groups are written additively.

\section{Construction of the braid}
\label{sec:braid}

We follow Hambleton and Kreck \cite[Section~2]{hk2}, who treat the case $m=4$, $n=2$. The dimension bookkeeping in the surgery and obstruction-theory steps depends on $m$ and $n$, so we give the arguments in full.

\subsection{Conventions}
\label{subsec:conventions}

Throughout, $n\ge2$ and $m\in\{2n-1,2n\}$ with $m\ge4$. Put $\B=BO\langle n+1\rangle$, the $n$-connected cover of $BSO$; thus $\pi_i(\B)=0$ for $i\le n$ and $\B\to BSO$ induces isomorphisms on $\pi_i$ for $i>n$. A \emph{$\B$-manifold} is a smooth closed (or compact, with boundary) oriented manifold together with a lift of its stable normal bundle $N\to BSO$ to $\B$, called a $\B$-structure. For a space $Y$ the group $\Omega^{\B}_k(Y)$ is the bordism group of triples $(N,\nu_N,g)$ with $N$ a closed $k$-dimensional $\B$-manifold and $g\colon N\to Y$ a map; it is the value on $Y$ of the generalized homology theory $\pi_k(M\B\wedge Y_+)$, where $M\B$ is the Thom spectrum. Disjoint union is the sum, and $-[N,g]=[-N,g]$. Reference maps are based when $Y$ is based. Abbreviate $\Omega_k:=\Omega^{\B}_k(*)$.

Let $M$ be a closed, connected, oriented $\B$-manifold of dimension $m$, with a fixed $\B$-structure $\nu$, whose fundamental group is finite or infinite cyclic. Let $P=P_n(M)$ be the Postnikov $n$-type of $M$ and $c\colon M\to P$ the canonical map. It induces isomorphisms on $\pi_i$ for $i\le n$, and $P$ is obtained from $M$ by attaching cells of dimension at least $n+2$. Hence $c$ is $(n+1)$-connected, and the homotopy fibre $X$ of $c$ is $n$-connected. We replace $P$ by the mapping cylinder of $c$, so that $M\subset P$ and $c$ is the inclusion. For lifting problems we use that $c$ is homotopic to a fibration with fibre $X$. Let $V$ be a space and $A\subset V$ a subcomplex. Given a map $\Phi\colon V\to P$ and a map $\Phi_A\colon A\to M$ with $c\circ\Phi_A\simeq\Phi|_A$, a \emph{lift of $\Phi$ extending $\Phi_A$} is a map $r\colon V\to M$ with $r|_A=\Phi_A$ together with a homotopy $c\circ r\simeq\Phi$ that is stationary on $A$. By the homotopy extension property we may replace $\Phi$ by a homotopic map with $c\circ\Phi_A=\Phi|_A$ strictly; this does not change any bordism class below, since reference maps matter only up to homotopy. Two lifts are \emph{homotopic rel $A$} if they are homotopic by a homotopy stationary on $A$.

The only further hypothesis is the following.
\begin{itemize}
\item[(T)] $\pi_1(P)$ acts trivially on $\pi_i(X)$ for every $i$.
\end{itemize}
It holds if $M$ is simply connected, and also if $c$ is a trivial fibration, as for $M=S^1\times S^{m-1}$, where $c$ is the projection to the first factor. Condition (T) says that the local coefficient systems $\pi_i(X)$ over $P$ are constant, so the obstruction groups below are ordinary cohomology groups.

\begin{remark}\label{rem:no-F}
The $\B$-structure $\nu$ need not factor through $c$. All bordism groups below are taken with the fixed structure $\nu$ on $M$, and the reference maps to $P$ do not depend on $\B$-structures. A factorization would be needed only to identify $\Omega^{\B}_*(P)$ with the bordism group of the normal $n$-type in Kreck's sense.
\end{remark}

\begin{remark}
For $m=4$ and $n=2$ we have $\B=BSpin$, and Theorem~A recovers the braid of Hambleton and Kreck for spin $4$-manifolds, with the additional restriction on $\pi_1$ made here for the proof of Lemma~\ref{lem:PD_upgrade}. For $n=3$ the manifolds are spin and for $4\le n\le7$ they are string, since $BO\langle5\rangle=BO\langle6\rangle=BO\langle7\rangle=BO\langle8\rangle=BString$.
\end{remark}

In every construction below the cobordism $W$ contains an arc joining the basepoints of its two ends, along which all maps to $M$ and $P$ are based. Surgeries are performed away from this arc. If the other end of the cobordism is disconnected, the arc ends in a chosen component and the lifts are based there, which is possible because the fibre $X$ is connected. The arc is contractible, so it does not change homology in positive degrees, and the obstruction groups, which lie in degrees at least $n+2\ge4$, are unaffected.

\subsection{Surgery and lifting}
\label{subsec:surgery-lifting}

For $d\in\{m+1,m+2\}$ put $\ell_d=\lfloor d/2\rfloor-1$, the largest $i$ with $2i+2\le d$. Then $\ell_{m+1}=n-1$ and $\ell_{m+2}=m-n\le n$. The condition $2i+2\le d$ ensures that surgery on an $i$-sphere in a $d$-manifold creates no new homotopy in degrees at most $i$.

Let $V$ be a compact connected oriented $\B$-manifold of dimension $d$ whose boundary is written $\partial V=A\cup C$, with $A$ and $C$ compact submanifolds of codimension $0$ in $\partial V$ and $A\cap C=\partial A=\partial C$. Let $\Phi\colon V\to P$ be a map such that $C$ is connected and $\Phi|_C\colon C\to P$ induces isomorphisms on $\pi_j$ for $j\le n$ (for example $C=M$ with $\Phi|_C=c$, or $C=M\times I$ with $\Phi|_C=c\circ\mathrm{pr}_1$).

\begin{lemma}[Surgery]\label{lem:surgery}
After a sequence of surgeries on embedded spheres in the interior of $V$, the triple $(V,\Phi)$ is replaced by a $\B$-bordant triple $(V',\Phi')$ rel $\partial V$ with $V'$ connected, $\Phi'|_{\partial V}=\Phi|_{\partial V}$, and such that $\Phi'_*\colon\pi_j(V')\to\pi_j(P)$ is an isomorphism for $j\le\ell_d$. In that case the pair $(V',C)$ is $\ell_d$-connected, so $H_j(V',C;G)=0$ for $j\le\ell_d$ and every abelian group $G$ with trivial action.
\end{lemma}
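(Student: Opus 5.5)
The plan is the standard surgery-below-the-middle-dimension argument, done one degree at a time. We proceed by induction on $j$ from $0$ to $\ell_d$, killing the kernel of $\Phi_*$ and realizing the cokernel in each degree.

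\emph{Degree $0$.} If $V$ is disconnected, join its components by $0$-surgeries (connected sum along arcs in the interior). $\Phi$ extends over the attached $1$-handles because $P$ is connected. The $\B$-structure extends because $\B$ is $n$-connected with $n\ge2$, so there is no obstruction on a $1$-handle.

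\emph{Surjectivity in degree $j$.} Since $\Phi|_C$ induces isomorphisms on $\pi_j$ for $j\le n$ and $\ell_d\le n$, the map $\pi_j(V)\to\pi_j(P)$ is already surjective for $j\le\ell_d$: it factors the isomorphism from $\pi_j(C)$ through $\pi_j(C)\to\pi_j(V)$. So no surgery is needed for surjectivity, and only kernels must be killed.

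\emph{Injectivity in degree $j$.} Assume $\Phi_*$ is an isomorphism in degrees $<j$, with $1\le j\le\ell_d$. Take a generator $x$ of $\ker(\pi_j(V)\to\pi_j(P))$. When $j=1$, finitely many generators suffice, since $\pi_1(V)$ is finitely generated and $\pi_1(P)$ is finite or cyclic, hence finitely presented, so the kernel is normally finitely generated. For higher $j$, first note that $\pi_j(V)$ is finitely generated over $\mathbb{Z}[\pi_1]$.
\begin{itemize}
\item Since $2j<d$, represent $x$ by an embedded sphere $S^j\subset\mathrm{int}\,V$, by general position and Whitney.
\item Its normal bundle is stably classified by the composite $S^j\to V\to\B\to BSO$. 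This is null because $\pi_j(\B)=0$ for $j\le n$. Since $j<d-j$, stable triviality gives triviality, so we get a framing $S^j\times D^{d-j}$.
\item Do surgery. The trace admits a $\B$-structure extending the given one: the obstruction lies in $\pi_{j+1}(\B)$ and vanishes for $j+1\le n$. At $j=\ell_d=n$ (possible only when $m=2n$) we instead choose the framing compatibly, using the freedom of the framing in $\pi_j(SO)$ to match the null-homotopy in $\B$; this is the usual argument.
\item $\Phi$ extends over the trace because $\Phi(x)=0$ in $\pi_j(P)$.
\end{itemize}

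\emph{Bookkeeping.} The condition $2j+2\le d$ ensures that the dual sphere has dimension $d-j-1\ge j+1$, so surgery does not change $\pi_i$ for $i<j$. It only kills $x$ modulo the $\pi_1$-action, adding possibly new classes in degree $j$ coming from the $(d-j-1)$-sphere only when $d-j-1=j$, which is excluded. A finite number of surgeries kills the kernel in degree $j$, and we continue. The bordism is rel $\partial V$ because all surgeries are interior.

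\emph{Connectivity statement.} Now $\pi_j(C)\to\pi_j(V')$ is an isomorphism for $j\le\ell_d$, since both sides map isomorphically to $\pi_j(P)$ ($\ell_d\le n$). Together with $V'$ and $C$ connected, this shows $(V',C)$ is $\ell_d$-connected. Then relative Hurewicz applied to the universal covers, or the Whitehead/Hurewicz theorem with trivial coefficients, gives $H_j(V',C;G)=0$ for $j\le\ell_d$.

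\emph{Main obstacle.} The main obstacle is finite generation of the kernel for $j\ge2$ when $\pi_1$ is infinite cyclic, where $\pi_j$ may not be finitely generated as an abelian group. I would handle it by working with $\mathbb{Z}[\pi_1]$-module generators of $\pi_j(V)\cong\pi_j(\tilde V)$, using that $\mathbb{Z}[\mathbb{Z}]$ is Noetherian and that $H_j(\tilde V)$ is finitely generated over it. A secondary subtle point is the $\B$-structure at $j=n$.
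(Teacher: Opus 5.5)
Your overall scheme matches the paper's proof: surjectivity comes from $C$, you kill kernels degree by degree below the middle dimension, and the connectivity statement follows at the end. But the step you flag as the main obstacle does not work as you propose. You assert that $\pi_j(V)$ is finitely generated over $\mathbb{Z}[\pi_1]$ because $\pi_j(V)\cong\pi_j(\widetilde V)$ and $H_j(\widetilde V)$ is finitely generated. Passing from $H_j(\widetilde V)$ to $\pi_j(\widetilde V)$ is Hurewicz, which needs $\widetilde V$ to be $(j-1)$-connected. After the earlier steps you only know $\pi_i(\widetilde V)\cong\pi_i(M)$ for $2\le i<j$, and these groups need not vanish.

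Moreover, the claim itself is false in general when $\pi_1\cong\mathbb{Z}$. Since $\pi_j(V)\to\pi_j(P)=\pi_j(M)$ is onto, $\pi_j(V)$ can be finitely generated only if $\pi_j(M)$ is. Take $M=(S^1\times S^7)\connsum(S^2\times S^6)$, so $m=8$, $n=4$, $d=10$, and $j=3\le\ell_{10}=4$. Then $\pi_3(M)\cong\pi_3(S^1\vee S^2)\cong\pi_3(\bigvee_{\mathbb{Z}}S^2)$. By Hilton--Milnor this is a free $\mathbb{Z}[t^{\pm1}]$-module of infinite rank: one summand for the Hopf maps, and one for each $\delta\ge1$, spanned by the Whitehead products $[\iota_i,\iota_{i+\delta}]$. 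For finite $\pi_1$ your claim is true, but the reason is Serre's theorem for the finite simply connected complex $\widetilde V$, not the homology argument.

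What the surgery actually needs is finite generation of the kernel only. Replace $P$ by the mapping cylinder of $\Phi$. Then $\ker\Phi_*$ on $\pi_j$ is the image of $\pi_{j+1}(P,V)\cong\pi_{j+1}(\widetilde P,\widetilde V)$. After the earlier steps this pair is $j$-connected and $\widetilde V$ is simply connected, so relative Hurewicz gives $\pi_{j+1}(\widetilde P,\widetilde V)\cong H_{j+1}(\widetilde P,\widetilde V)$. This module is an extension of a submodule of $H_j(\widetilde V)$ by a quotient of $H_{j+1}(\widetilde P)$:
\begin{itemize}
\item the submodule of $H_j(\widetilde V)$ is finitely generated because $V$ is compact and $\mathbb{Z}\pi_1(M)$ is Noetherian;
\item $H_{j+1}(\widetilde P)$ is a quotient of $H_{j+1}(\widetilde M)$, because $P$ is obtained from $M$ by cells of dimension $\ge n+2>j+1$, so it is finitely generated as well.
\end{itemize}
This is how the paper's proof gets finite generation. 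Its opening remark that $\widetilde V$ is $(j-1)$-connected shares your slip, but the argument does not use it.

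A smaller point: your account of the $\B$-structure on the trace has the cases reversed. For $j<n$ the obstruction to extending over the $(j+1)$-handle lies in $\pi_j$ of the fibre of $\B\to BSO$, which is $\pi_j(SO)$. So it does not vanish automatically; an arbitrary framing can fail, already for circles in spin manifolds. For $j=n$, on the other hand, every framing works. The uniform argument, which your second bullet implicitly uses and which the paper uses, is to take the framing induced by the chosen null-homotopy of $S^j\to\B$; that null-homotopy is then itself the extension.
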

\begin{proof}
Since $\Phi|_C$ is an isomorphism on $\pi_j$ for $j\le n$ and $\ell_d\le n$, the map $\Phi_*$ is onto on $\pi_j$ for $j\le\ell_d$. We kill $\ker(\Phi_*\colon\pi_j(V)\to\pi_j(P))$ for $j=1,2,\dots,\ell_d$ in turn. For $j=1$ the kernel is normally generated by finitely many elements, because $\pi_1(V)$ is finitely presented and $\pi_1(P)=\pi_1(M)$ is finitely presented. For $j\ge2$ the previous steps make the universal cover of $V$ $(j-1)$-connected, so $\pi_j(V)$ is a finitely generated module over the Noetherian ring $\mathbb{Z}\pi_1(M)$ (here we use that $\pi_1(M)$ is finite or infinite cyclic). Indeed, $P$ is obtained from $M$ by attaching cells of dimension at least $n+2$, so the lift $\widetilde\Phi\colon\widetilde V\to\widetilde P$ to universal covers has $\pi_{j+1}(\widetilde P,\widetilde V)\cong H_{j+1}(\widetilde P,\widetilde V)$ because the pair is $j$-connected, and this is finitely generated because $H_{j+1}(\widetilde P,\widetilde M)=0$ for $j+1\le n+1$ while $V$ and $M$ are finite complexes up to homotopy. The exact sequence $\pi_{j+1}(\widetilde P,\widetilde V)\to\pi_j(V)\to\pi_j(P)$ then shows that the kernel of $\Phi_*$ on $\pi_j$ is a quotient of a finitely generated module, hence is finitely generated. Represent each generator by an embedded sphere $S^j\subset\mathrm{int}\,V$; this is possible because $2j<d$. Since $2j+2\le d$, the dual handle of the surgery has index $d-j\ge j+2$, so surgery on $S^j$ does not create elements of $\pi_i$ for $i\le j$. Since $\pi_j(\B)=0$ for $j\le n$, the $\B$-structure restricted to $S^j$ is null-homotopic, and the choice of a null-homotopy gives a stable framing of the normal bundle of $S^j$ in $V$. In the stable range $d-j>j$ it is an honest framing, for which the $\B$-structure extends over the trace of the surgery. Since $\Phi$ is null-homotopic on $S^j$, it extends over the trace as well. Surgery on the sphere replaces $\pi_j(V)$ by its quotient by the submodule generated by the class of the sphere and does not change $\pi_i$ for $i<j$. Since the sphere lies in the kernel of $\Phi_*$, the map $\Phi$ induces on the quotient the map to $\pi_j(P)$ induced by $\Phi_*$, which is still onto. As the spheres kill the whole kernel, it is injective as well. After the last step $\Phi'_*$ is an isomorphism on $\pi_j$ for $j\le\ell_d$. The surgered manifold is connected because we only perform surgeries on spheres of dimension at least $1$ in a manifold of dimension at least $5$.

Finally $\pi_j(C)\to\pi_j(V')$ is an isomorphism for $j\le\ell_d$, because both $\pi_j(C)$ and $\pi_j(V')$ map isomorphically to $\pi_j(P)$ and the composite is the map induced by $\Phi|_C$. Hence $\pi_j(V',C)=0$ for $j\le\ell_d$, and by the relative Hurewicz theorem and the universal coefficient theorem $H_j(V',C;G)=0$ for $j\le\ell_d$.
\end{proof}

\begin{lemma}[Lifting]\label{lem:lifting}
Assume (T). Let $(V,\Phi)$ be as in Lemma~\ref{lem:surgery} with $d\in\{m+1,m+2\}$, with $\Phi_*$ an isomorphism on $\pi_j$ for $j\le\ell_d$, and let $\Phi_A\colon A\to M$ be a map with $c\circ\Phi_A\simeq\Phi|_A$. Then there is a lift of $\Phi$ extending $\Phi_A$.
\end{lemma}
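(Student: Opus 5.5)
We prove the lemma by classical obstruction theory for the relative lifting problem. We regard $c\colon M\to P$ as a fibration with $n$-connected fibre $X$, and $\Phi_A$ as a partial lift over $A$. Give $(V,A)$ a relative CW structure, so that $V$ is obtained from $A$ by attaching cells of dimension at most $d$. Since $V$ is a $d$-manifold with nonempty boundary, we may in fact take a relative structure of dimension at most $d-1$; this is discussed below. By (T) the coefficient systems $\pi_{k-1}(X)$ are untwisted. Hence the successive obstructions to extending the lift over the $k$-skeleton lie in ordinary cohomology groups
\[
H^{k}(V,A;\pi_{k-1}(X)).
\]
Since $X$ is $n$-connected, $\pi_{k-1}(X)=0$ for $k\le n+1$. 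So we can lift over the $(n+1)$-skeleton rel $A$ with no choices mattering, and the only possible obstructions lie in degrees $n+2\le k\le d$.

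The key step is to show that these groups vanish, or at least that the obstructions do. We use the connectivity from Lemma~\ref{lem:surgery}: the pair $(V,C)$ is $\ell_d$-connected, so $H_j(V,C;G)=0$ for $j\le\ell_d$. Lefschetz duality for the manifold triad $(V;A,C)$ gives
\[
H^k(V,A;G)\cong H_{d-k}(V,C;G).
\]
By (T) the groups are untwisted, so we may work with ordinary coefficients via the universal coefficient theorem, using the homology vanishing for all coefficient groups. The obstruction group therefore vanishes whenever $d-k\le\ell_d$, that is, whenever $k\ge d-\ell_d$.

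We now check that the range $k\ge n+2$ lies inside this. For $d=m+1$ we have $d-\ell_d=m+2-n$. This is at most $n+2$ exactly when $m\le 2n$, which holds in both cases $m=2n-1$ and $m=2n$. For $d=m+2$ we have $\ell_d=m-n$, so $d-\ell_d=n+2$. Hence every obstruction group $H^k(V,A;\pi_{k-1}(X))$ with $k\ge n+2$ is zero, and the lift extends successively over all skeleta. This produces $r\colon V\to M$ with $r|_A=\Phi_A$ and a homotopy $c\circ r\simeq\Phi$ rel $A$. Here we first replace $\Phi$ by a homotopic map with $c\circ\Phi_A=\Phi|_A$, as in the conventions, and use the homotopy lifting property of the fibration.

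The main obstacle is making the duality step honest. We need $H^k(V,A;G)=0$ for all $k\ge n+2$ with $G=\pi_{k-1}(X)$ possibly not finitely generated, including the top degree $k=d$, where $H_0(V,C;G)=0$ requires $C$ nonempty and $V$ connected; both hold. We also need the relative CW structure on $(V,A)$ to have cells only up to dimension $d$, which a triangulation of $V$ provides. If $A$ is empty, the top obstruction $H^d(V;G)\cong H_0(V,C;G)$ still vanishes because $C\neq\emptyset$. For infinite cyclic $\pi_1$ one also needs that the obstructions are ordinary rather than twisted classes, and this is exactly where (T) enters.
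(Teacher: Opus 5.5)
Your proposal is correct and takes essentially the paper's approach: obstructions in $H^{k}(V,A;\pi_{k-1}(X))$ for $k\ge n+2$, untwisted by (T), are dualized by Poincar\'e--Lefschetz duality to $H_{d-k}(V,C;\pi_{k-1}(X))$, and these vanish by the connectivity from Lemma~\ref{lem:surgery} because the largest relevant degree $d-n-2$ is at most $\ell_d$ for $m\in\{2n-1,2n\}$. Your aside about a relative cell structure of dimension at most $d-1$ is never used and can be dropped, since the top obstruction already vanishes because $H_0(V,C;G)=0$.
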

\begin{proof}
Lifts in the sense of the definition are what obstruction theory for the fibration replacing $c$ produces. Since $X$ is $n$-connected, the obstructions to lifting rel $A$ lie in $H^{i+1}(V,A;\pi_i(X))$ for $i\ge n+1$. The coefficient systems are pulled back from $P$ and are constant by (T), and the fibre $X$ is simply connected, so ordinary obstruction theory applies. By Poincar\'e--Lefschetz duality $H^{i+1}(V,A;G)\cong H_{d-i-1}(V,C;G)$. By Lemma~\ref{lem:surgery} these groups vanish when $d-i-1\le\ell_d$. The largest degree occurring is $d-n-2$, which is $m-n-1\le\ell_{m+1}$ for $d=m+1$ and $m-n\le\ell_{m+2}$ for $d=m+2$.
\end{proof}

\begin{lemma}\label{lem:PD_upgrade}
Let $M$ be a closed oriented manifold whose fundamental group $\pi$ is finite or infinite cyclic, and let $f\colon M\to M$ be a map of degree one that induces an isomorphism on $\pi_1$. Then $f$ is a homotopy equivalence.
\end{lemma}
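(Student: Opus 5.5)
We pass to the universal cover $\widetilde M$ and show that the lift $\widetilde f\colon\widetilde M\to\widetilde M$ is a homology isomorphism. Whitehead's theorem then gives the result: $\widetilde f$ is a homotopy equivalence, and since $f$ is an isomorphism on $\pi_1$, $f$ induces isomorphisms on all homotopy groups, so it is a homotopy equivalence because $M$ is a CW complex. Since $f_*$ is an isomorphism on $\pi=\pi_1(M)$, it lifts to a map $\widetilde f$ that is equivariant with respect to the automorphism $\phi=f_*$ of $\pi$. We twist the module structure on the target by $\phi$; this allows us to regard $\widetilde f$ as a $\mathbb{Z}\pi$-chain map $C_*(\widetilde M)\to \phi^*C_*(\widetilde M)$.

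\textbf{Step 1 (degree-one splitting).} We use the standard umkehr argument for degree-one maps with twisted coefficients. Equivariant Poincar\'e duality for $M$ with coefficients in $\mathbb{Z}\pi$ is given by cap product with $[M]$. Because $\deg f=1$, we have $f_*[M]=[M]$. Define the transfer $f^!=(\cap[M])\circ f^*\circ(\cap[M])^{-1}$ on $H_*(\widetilde M)$, using $\phi$-twisted coefficients on one side. The projection formula $f_*(f^*x\cap[M])=x\cap f_*[M]=x\cap[M]$ gives $f_*\circ f^!=\id$. Hence $\widetilde f_*\colon H_k(\widetilde M)\to H_k(\widetilde M)$ is a split surjection for every $k$, and its kernel $K_k$ is a direct summand as a $\mathbb{Z}\pi$-module. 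There is one point to check: $f$ might reverse the orientation character twist. This does not arise, since $M$ is oriented and $\deg f=1$, so $\phi$ preserves orientation of the relevant local system.

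\textbf{Step 2 (killing the kernel).} This is the crucial step and the one where the hypothesis on $\pi$ is needed. In general a split surjection $N\to N$ of modules can have nonzero kernel, for example a shift on an infinitely generated module. However, $\mathbb{Z}\pi$ is Noetherian when $\pi$ is finite or infinite cyclic, and $H_k(\widetilde M)$ is a finitely generated $\mathbb{Z}\pi$-module because $C_*(\widetilde M)$ is a finite free complex. A surjective endomorphism of a finitely generated module over a Noetherian ring is injective. Explicitly, the kernels of the iterates $\widetilde f_*^{\,j}$ form an ascending chain that must stabilize, and surjectivity then forces the kernel to be zero. We must handle the $\phi$-semilinearity of $\widetilde f_*$. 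The same chain argument works for a $\phi$-semilinear endomorphism, because each $\ker \widetilde f_*^{\,j}$ is still a $\mathbb{Z}\pi$-submodule ($\phi$ is an automorphism). Alternatively, since $\operatorname{Aut}(\pi)$ is finite for $\pi$ cyclic, some power $f^N$ induces the identity on $\pi$, and we can apply the Noetherian argument to $f^N$, which is again of degree one. I expect this step to be the main obstacle, because we have to make sure the semilinear version of the Noetherian argument is genuinely valid.

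\textbf{Step 3 (conclusion).} From Step 2, $\widetilde f_*$ is an isomorphism on $H_*(\widetilde M;\mathbb{Z})$. Since $\widetilde M$ is simply connected, Whitehead's theorem makes $\widetilde f$ a homotopy equivalence. Combining this with the isomorphism on $\pi_1$ shows that $f$ is a weak equivalence, and hence a homotopy equivalence.
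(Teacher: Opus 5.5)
Your proposal is correct and follows essentially the same route as the paper: the $\mathbb{Z}\pi$-coefficient umkehr map gives $\widetilde f_*\circ f^!=\id$, the Noetherian property of $\mathbb{Z}\pi$ together with finite generation of $H_j(\widetilde M)$ upgrades this split surjection to a bijection, and Whitehead's theorem on the universal cover finishes. The paper deals with the semilinearity exactly by your alternative, passing to a power $f^k$ that acts as the identity on $\pi$ (using that $\operatorname{Aut}(\pi)$ is finite). Your direct ascending-chain argument on the submodules $\ker\widetilde f_*^{\,j}$ is also valid and does not need that finiteness.
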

\begin{proof}
Let $\widetilde f\colon\widetilde M\to\widetilde M$ be a lift to the universal cover; it is equivariant with respect to the automorphism $f_*$ of $\pi$. Since $f$ has degree one and induces an isomorphism on $\pi_1$, the usual cap product argument with $\mathbb{Z}\pi$-coefficients shows that $\widetilde f_*\colon H_j(\widetilde M)\to H_j(\widetilde M)$ is a split surjection for every $j$ (see \cite[Chapter~2]{wall_surgery}). Explicitly, Poincar\'e duality for the closed manifold $M$ with $\mathbb{Z}\pi$-coefficients gives isomorphisms $H_j(\widetilde M)\cong H^{m-j}(M;\mathbb{Z}\pi)$, and the Umkehr map $f^!\colon H_j(\widetilde M)\to H_j(\widetilde M)$ obtained from $f^*$ on $H^{m-j}(M;\mathbb{Z}\pi)$ satisfies $\widetilde f_*\circ f^!=\id$, because $f_*[M]=[M]$ and $f_*$ is an isomorphism on $\pi_1$. The automorphism group of $\pi$ is finite, so some power $f^k$ induces the identity on $\pi$, and $(\widetilde f_*)^k=(\widetilde f^{\,k})_*$ is a surjective $\mathbb{Z}\pi$-linear endomorphism of $H_j(\widetilde M)$. The ring $\mathbb{Z}\pi$ is Noetherian and $\widetilde M$ has a finite free $\mathbb{Z}\pi$-chain complex, so $H_j(\widetilde M)$ is a Noetherian module, and a surjective endomorphism of a Noetherian module is injective. Hence $\widetilde f_*$ is injective, and therefore bijective. Thus $\widetilde f$ is a homology isomorphism of simply connected spaces, hence a homotopy equivalence, and so is $f$.
\end{proof}

\subsection{The maps $\alpha$ and $\beta$, and relative bordism}
\label{subsec:bordism-alpha}

Write $[N,g]$ for the class of $(N,\nu_N,g)$, and equip $M$ with its fixed $\B$-structure $\nu$. The \emph{degree} of a class $[N,g]\in\Omega^{\B}_m(M)$ is $g_*[N]\in H_m(M;\mathbb{Z})\cong\mathbb{Z}$; it is a homomorphism. Let
\[
\widehat\Omega^{\B}_m(M):=\ker\bigl(\deg\colon\Omega^{\B}_m(M)\to\mathbb{Z}\bigr).
\]
Define
\[
\alpha\colon\Aut_\bullet(M)\to\widehat\Omega^{\B}_m(M),\qquad\alpha(f)=[M,f]-[M,\id],
\]
and
\[
\beta\colon\Aut_\bullet(P)\to\Omega^{\B}_m(P),\qquad\beta(\phi)=[M,\phi\circ c]-[M,c].
\]
Here $\Aut_\bullet(P)$ is the group of based homotopy classes of self-equivalences of $P$. The map $\alpha$ lands in $\widehat\Omega^{\B}_m(M)$ because $f_*[M]=[M]$. Neither map is a homomorphism: for $f,g\in\Aut_\bullet(M)$ one has $\alpha(f\circ g)=\alpha(f)+f_*\alpha(g)$.

Every self-equivalence $f$ of $M$ induces a unique $\phi_f\in\Aut_\bullet(P)$ with $\phi_f\circ c\simeq c\circ f$, because $P$ is an $n$-type and $c$ is $(n+1)$-connected, so that $c^*\colon[P,P]_\bullet\to[M,P]_\bullet$ is a bijection. This defines a homomorphism
\[
\pi\colon\Aut_\bullet(M)\to\Aut_\bullet(P),\qquad\pi(f)=\phi_f .
\]
Since $\phi_f\circ c\simeq c\circ f$ and $f_*[M]=[M]$, every element of the image of $\pi$ preserves $c_*[M]\in H_m(P;\mathbb{Z})$.

Regard $M\subset P$ via the mapping cylinder. The relative bordism group $\Omega^{\B}_{m+1}(P,M)$ consists of classes of triples $(W,\nu_W,F)$, where $W$ is a compact $(m+1)$-dimensional $\B$-manifold with boundary, $F\colon(W,\partial W)\to(P,M)$ is a map, and bordisms are bordisms of such pairs. There is a boundary map $\partial\colon\Omega^{\B}_{m+1}(P,M)\to\Omega^{\B}_m(M)$, $\partial[W,F]=[\partial W,F|_{\partial W}]$, and the long exact sequence of the pair is exact. Define
\[
\widehat\Omega^{\B}_{m+1}(P,M):=\partial^{-1}\bigl(\widehat\Omega^{\B}_m(M)\bigr),
\]
the group of relative classes whose boundary has degree zero over $M$. The next statement is then immediate from the long exact sequence of the pair.

\begin{lemma}\label{bordism exact}
The sequence
\[
\Omega^{\B}_{m+1}(M)\to\Omega^{\B}_{m+1}(P)\to\widehat\Omega^{\B}_{m+1}(P,M)\xrightarrow{\ \partial\ }\widehat\Omega^{\B}_m(M)\to\Omega^{\B}_m(P)
\]
is exact.
\end{lemma}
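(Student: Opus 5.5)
We deduce the statement from the long exact sequence of the pair $(P,M)$ in the homology theory $\Omega^{\B}_*$,
\[
\Omega^{\B}_{m+1}(M)\xrightarrow{i_*}\Omega^{\B}_{m+1}(P)\xrightarrow{j_*}\Omega^{\B}_{m+1}(P,M)\xrightarrow{\partial}\Omega^{\B}_m(M)\xrightarrow{i_*}\Omega^{\B}_m(P),
\]
whose exactness is standard for a generalized homology theory. Here $i\colon M\subset P$ is the inclusion into the mapping cylinder and $j_*$ regards a closed manifold over $P$ as a relative class with empty boundary. The plan is to check that restricting to the hatted subgroups keeps exactness at each of the three interior spots.

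First, $j_*$ lands in $\widehat\Omega^{\B}_{m+1}(P,M)$, because $\partial\circ j_*=0$ and $0$ has degree zero. The last map is the restriction of $i_*$ to $\widehat\Omega^{\B}_m(M)$, and $\partial$ maps $\widehat\Omega^{\B}_{m+1}(P,M)$ into $\widehat\Omega^{\B}_m(M)$ by the definition of that group.

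Next we check exactness at each spot.
\begin{itemize}
\item At $\Omega^{\B}_{m+1}(P)$ nothing changes, since the groups and maps there are the original ones.
\item At $\widehat\Omega^{\B}_{m+1}(P,M)$, the kernel of $\partial$ restricted to the subgroup equals the full kernel of $\partial$, since that kernel lies inside $\widehat\Omega^{\B}_{m+1}(P,M)$. The full kernel equals $\operatorname{im} j_*$, which we just saw lies in the subgroup.
\item At $\widehat\Omega^{\B}_m(M)$, take $x$ in this group with $i_*x=0$. By exactness $x=\partial y$ for some $y\in\Omega^{\B}_{m+1}(P,M)$. Since $\partial y=x$ has degree zero, $y$ lies in $\widehat\Omega^{\B}_{m+1}(P,M)$ by definition, so $x$ lies in the image of the restricted $\partial$. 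The reverse inclusion is immediate.
\end{itemize}

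No step is a genuine obstacle, since $\widehat\Omega^{\B}_{m+1}(P,M)$ is defined precisely as the preimage of $\widehat\Omega^{\B}_m(M)$. The only point needing care is that the degree map is a homomorphism, so that the hatted groups are subgroups. This was noted when the degree was defined.
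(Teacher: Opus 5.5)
Your proof is correct and follows the paper's argument: both restrict the long exact sequence of the pair $(P,M)$ to the subgroups $\widehat\Omega^{\B}_m(M)$ and $\partial^{-1}\widehat\Omega^{\B}_m(M)$, and check the three interior spots just as you do. In particular, exactness at $\widehat\Omega^{\B}_m(M)$ follows from the definition of $\widehat\Omega^{\B}_{m+1}(P,M)$ as a preimage.
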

\begin{proof}
The sequence is the long exact sequence of the pair $(P,M)$ in the homology theory $\Omega^{\B}_*$, with $\Omega^{\B}_m(M)$ and $\Omega^{\B}_{m+1}(P,M)$ replaced by the subgroups $\widehat\Omega^{\B}_m(M)$ and $\partial^{-1}\widehat\Omega^{\B}_m(M)$. Exactness at $\Omega^{\B}_{m+1}(P)$ is unchanged. At $\widehat\Omega^{\B}_{m+1}(P,M)$ the kernel of $\partial$ is the image of $\Omega^{\B}_{m+1}(P)$, which lies in the subgroup. At $\widehat\Omega^{\B}_m(M)$ the kernel of the map to $\Omega^{\B}_m(P)$ is $\widehat\Omega^{\B}_m(M)\cap\operatorname{im}\partial$, which is $\partial$ of $\partial^{-1}\widehat\Omega^{\B}_m(M)$ by definition.
\end{proof}

\subsubsection*{Normal form for relative classes}

Say that $(W,F)$ is in \emph{standard form} if $W$ is connected, $\partial W=\partial_1W\sqcup N$ with $\partial_1W=-M$ and $F|_{\partial_1W}=c$, and $g:=F|_N\colon N\to M$ has degree $1$, that is $g_*[N]=[M]$. Here $N$ is any closed $\B$-manifold, possibly disconnected.

\begin{lemma}\label{lem:normal_form}
Every class in $\widehat\Omega^{\B}_{m+1}(P,M)$ has a representative in standard form. If two standard form representatives $(W,F)$ and $(W',F')$ with boundaries $-M\sqcup N$ and $-M\sqcup N'$ define the same class, then there is a $\B$-manifold $T$ of dimension $m+2$ with
\[
\partial T=W\cup W'\cup(M\times I)\cup Y_N ,
\]
where $M\times I$ joins $\partial_1W$ to $\partial_1W'$, $Y_N$ is a $\B$-bordism between $N$ and $N'$, and $T$ carries a map to $P$ extending $F$, $F'$, $c\circ\mathrm{pr}_1$ on $M\times I$, and a map $G_N\colon Y_N\to M$.
\end{lemma}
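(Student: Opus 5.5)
Take a class $[W,F]\in\widehat\Omega^{\B}_{m+1}(P,M)$ with $F\colon(W,\partial W)\to(P,M)$. Its boundary $[\partial W,F|_{\partial W}]$ has degree zero over $M$. Form $W_1=W\sqcup (M\times I)$, where $M\times I$ carries $c\circ\mathrm{pr}_1$ and its two ends are $-M$ and $M$ with the fixed $\B$-structure $\nu$. Then $[W_1,F\sqcup c\,\mathrm{pr}_1]=[W,F]$, since $(M\times I,\partial)\to(P,M)$ is null in relative bordism: it is the boundary of $M\times I\times I$ with the corners treated appropriately, or equivalently it comes from $[M\times I]$ in $\Omega_{m+1}(P)$ relative to its boundary. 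Declare $\partial_1W_1=-M\times\{0\}$ with map $c$. Let $N=\partial W\sqcup M\times\{1\}$. Then $g=F|_N$ has degree $0+1=1$.

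To make $W_1$ connected, I will do $0$-surgery, that is, boundary connected sum of components along arcs in the interior. Because $P$ is connected, the maps extend over the $1$-handles. Because $\pi_0(\B)=\pi_1(\B)=0$ and orientations match, the $\B$-structure extends as well. This changes neither the boundary nor the relative class, since the trace is a relative bordism. This gives the standard form. Connectedness of $\partial_1W$ as a copy of $M$ is automatic.

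For the second statement, suppose $(W,F)$ and $(W',F')$ are equal in $\Omega^{\B}_{m+1}(P,M)$. A relative bordism is a $\B$-manifold $T_0$ of dimension $m+2$ with $\partial T_0=W\cup -W'\cup Y$. Here $Y$ is a $\B$-bordism from $\partial W=-M\sqcup N$ to $\partial W'=-M\sqcup N'$, $T_0$ maps to $P$, and $Y$ maps to $M$. The task is to split $Y$ so that the $-M$ pieces are joined by $M\times I$ with map $c\,\mathrm{pr}_1$, and to get a separate bordism $Y_N$ from $N$ to $N'$.

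My plan is to modify $T_0$ by gluing. Cut $Y$ open: let $Y_N:=Y\cup_{\partial_1W\sqcup\partial_1W'} (M\times I)$. This is a $\B$-bordism over $M$ from $N$ to $N'$, using the map $\id$ on $M\times I$ and the structure $\nu\times I$, and its reference map to $M$ is $G_N$. Then attach to $T_0$ the manifold $(M\times I)\times I$, which maps to $P$ by $c\circ\mathrm{pr}_1$, gluing one copy of $M\times I$ onto $Y_N$'s inserted collar. This makes the new $T$ have boundary $W\cup W'\cup (M\times I)\cup Y_N$ after rounding corners. The maps must match: on $M\times I$ inside $Y_N$ we need $c\circ\id=c\circ\mathrm{pr}_1$, which holds strictly via the mapping cylinder convention.

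The main obstacle is bookkeeping. The boundary pieces must glue with consistent orientations and $\B$-structures along corners, and the $\B$-structure on $Y$ near $\partial_1W$ must agree with $\nu\times I$. The latter holds because $\partial_1W$ carries the fixed structure $\nu$, so collars agree. If the orientation of the glued $M\times I$ causes trouble, I will use $-W'$ versus $W'$ in the boundary convention, which the statement leaves implicit.
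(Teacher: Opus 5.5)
Your first part matches the paper's argument: add $(M\times I,c\circ\mathrm{pr}_1)$, which is null in the relative group, then connect the components. An interior $0$-surgery is an interior connected sum rather than a boundary connected sum, but either works.

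\textbf{The gap is in how you build $T$.} As written, you glue $(M\times I)\times I$ ``onto $Y_N$'s inserted collar''. That collar is not contained in $\partial T_0=W\cup Y\cup(-W')$, so there is nothing in $T_0$ to glue it to, and a disjoint union does not have the required boundary.

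\textbf{The correct attachment.} Glue along the faces $M\times\{0,1\}\times I$. Attach them to bicollars $\partial_1W\times[-1,1]$ and $\partial_1W'\times[-1,1]$ of the two copies of $M$ in the closed manifold $\partial T_0$. First arrange that the maps and $\B$-structures are products there, namely $c\circ\mathrm{pr}_1$ and $\nu\times I$. This is $M$ times a two-dimensional $1$-handle, and it reconnects the ends:
\begin{itemize}
\item the face $M\times I\times\{0\}$ becomes the tube joining $\partial_1W$ to $\partial_1W'$;
\item the face $M\times I\times\{1\}$ becomes the tube you inserted into $Y$ to form $Y_N$.
\end{itemize}
On the inserted tube the map is $\mathrm{pr}_1$, not $\id$; this gives $G_N$.

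\textbf{The orientation check you deferred.} The one substantive check, which you set aside as bookkeeping, is that this attachment is orientable. It is. Parametrize both bicollars from the $W$ (resp.\ $W'$) side towards $Y$. They then carry opposite orientations, because $\partial_1W=\partial_1W'=-M$ and $W'$ enters $\partial T_0$ with reversed orientation. The two faces $M\times\{0\}\times I$ and $M\times\{1\}\times I$ also receive opposite boundary orientations. So the fix is not a choice of convention, as your last sentence suggests; it is exactly this sign.

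\textbf{Comparison with the paper.} With that repair, your route is a valid alternative, but it differs from the paper's, which avoids all gluing.
\begin{itemize}
\item The paper cancels $[-M,\id]$ in $\Omega^{\B}_m(M)$ to get some bordism $Y_N$ from $(N,g)$ to $(N',g')$.
\item It compares $W\cup\bigl((M\times I)\sqcup Y_N\bigr)\cup W'$ with the null-bordant $W\cup Y\cup W'$.
\item It absorbs the difference, a closed manifold $\Sigma=Y\cup-\bigl((M\times I)\sqcup Y_N\bigr)$ over $M$, into $Y_N$ as an extra closed component.
\item It then takes any null-bordism over $P$ as $T$.
\end{itemize}
Your construction gives $Y_N$ and a $T\supset T_0$ explicitly and needs no correction term, at the price of the collar, corner and orientation checks. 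The paper's uses only the group structure of $\Omega^{\B}_{m+1}(P)$ and needs none of them.
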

\begin{proof}
Let $[W_0,F_0]\in\widehat\Omega^{\B}_{m+1}(P,M)$, so $[\partial W_0,F_0|_{\partial W_0}]$ has degree $0$ over $M$. Form $W_1=W_0\sqcup(M\times I)$ with the map $F_0\sqcup c\circ\mathrm{pr}_1$. The class of $(M\times I,c\circ\mathrm{pr}_1)$ in $\Omega^{\B}_{m+1}(P,M)$ is zero: it is the part $M\times I\times\{0\}$ of the boundary of $M\times I\times I$, and the rest of that boundary maps to $M$ under $c\circ\mathrm{pr}_1$. So $W_1$ represents the same class. Put $\partial_1W_1=-M\times\{0\}$ and $N_1=\partial W_0\sqcup M\times\{1\}$, with the map $F_0|_{\partial W_0}\sqcup\mathrm{id}$; it has degree $0+1=1$. Finally join all components of $W_1$ by boundary connected sums along arcs ending in $N_1$ (and by interior connected sums with closed components). This does not change the relative bordism class, and it keeps $\partial_1W_1$ and the degree. The result is a standard form representative.

For the second assertion, a relative bordism between the two representatives is a $\B$-manifold $T$ with $\partial T=W\cup Y\cup W'$, where $Y$ is a bordism over $M$ between $\partial W=-M\sqcup N$ and $\partial W'=-M\sqcup N'$, and the map to $P$ extends the given maps on $W$, $W'$ and maps $Y$ into $M$. In $\Omega^{\B}_m(M)$ we have $[-M,\id]+[N,g]=[-M,\id]+[N',g']$, and cancelling in this group gives $[N,g]=[N',g']$, so there is a bordism $Y_N$ over $M$ between $N$ and $N'$. Let $Y'=(M\times I)\sqcup Y_N$. The closed manifold $\Sigma=Y\cup(-Y')$ over $M$ satisfies $[W\cup Y'\cup W']=[W\cup Y\cup W']-[\Sigma]=-[\Sigma]$ in $\Omega^{\B}_{m+1}(P)$, because $T$ is a null-bordism of $W\cup Y\cup W'$. Replacing $Y_N$ by $Y_N\sqcup\Sigma$, which is again a bordism over $M$ between $N$ and $N'$, makes $[W\cup Y'\cup W']=0$, and a null-bordism of this closed manifold is the required $T$.
\end{proof}

\subsection{The groups $\widetilde{\mathcal H}(M)$ and $\widetilde{\mathcal H}(P)$}
\label{subsec:He}

\begin{definition}
$\widetilde{\mathcal H}(M)$ is the set of equivalence classes of pairs $(W,F)$, where $W$ is a compact $(m+1)$-dimensional $\B$-manifold with $\partial W=\partial_1W\sqcup\partial_2W$, $\partial_1W=-M$, $\partial_2W=M$ (with the given $\B$-structure on both ends), and $F\colon W\to M$ is a map with $F|_{\partial_1W}=\id_M$ and $F|_{\partial_2W}=f$ a based homotopy equivalence. Two pairs $(W,F)$, $(W',F')$ are equivalent if there is a based homotopy $h$ from $f=F|_{\partial_2W}$ to $f'=F'|_{\partial_2W'}$ such that the closed manifold
\[
-W'\cup_{\partial_1}W\cup_{\partial_2}(M\times I)
\]
with the map $F'\cup F\cup h$ is zero in $\Omega^{\B}_{m+1}(M)$. The base point is the class of $(M\times I,\mathrm{pr}_1)$.

$\widetilde{\mathcal H}(P)$ is defined in the same way with $F\colon W\to P$, $F|_{\partial_1W}=c$ and $F|_{\partial_2W}=f$ a based map $M\to P$ that is an $(n+1)$-equivalence, that is, $f=\phi\circ c$ for a unique $\phi\in\Aut_\bullet(P)$ (so $f\simeq\phi\circ c$ is an iso on $\pi_i$, $i\le n$); the homotopy $h$ is a homotopy in $P$, and the closed manifold has to vanish in $\Omega^{\B}_{m+1}(P)$.
\end{definition}

Both sets are groups under composition of cobordisms, as in \cite{hk2}. We shall only use the pointed set structure. The next lemma is the analogue of \cite[Lemma~2.5]{hk2}.

\begin{lemma}\label{exactA}
There is an exact sequence of pointed sets
\[
\Omega^{\B}_{m+1}(M)\to\widetilde{\mathcal H}(M)\to\Aut_\bullet(M)\xrightarrow{\ \alpha\ }\widehat\Omega^{\B}_m(M).
\]
\end{lemma}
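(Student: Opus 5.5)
We define the maps and verify exactness at the two interior terms, following \cite[Lemma~2.5]{hk2}. The first map sends $[N,g]\in\Omega^{\B}_{m+1}(M)$ to the class of $(M\times I\sqcup N,\mathrm{pr}_1\sqcup g)$; after an interior connected sum along an arc we may take $W$ connected. This is well defined: if $[N,g]=[N',g']$, the closed manifold $-(M\times I\sqcup N')\cup(M\times I\sqcup N)\cup(M\times I)$ with the constant homotopy represents $[N,g]-[N',g']=0$. The second map sends $(W,F)$ to $f=F|_{\partial_2W}$. Its value is independent of the representative, since equivalent pairs have based homotopic ends.

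\emph{Exactness at $\Aut_\bullet(M)$.}
\begin{itemize}
\item The composite is zero: if $f=F|_{\partial_2W}$, then $W$ with $F$ is a $\B$-bordism over $M$ from $(M,\id)$ to $(M,f)$, so $\alpha(f)=0$.
\item Conversely, suppose $\alpha(f)=0$, so $[M,f]=[M,\id]$ in $\Omega^{\B}_m(M)$. Then there is a $\B$-bordism $W$ with $\partial W=-M\sqcup M$ and a map $F\colon W\to M$ restricting to $\id$ and $f$ on the two ends.
\item The $\B$-structures on the ends must be the fixed $\nu$. A bordism of triples restricts on the boundary to the given structures, so this holds automatically.
\item We can make $F$ based along an arc by a homotopy supported near the arc, since $M$ is connected. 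Then $(W,F)\in\widetilde{\mathcal H}(M)$ maps to $f$.
\end{itemize}

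\emph{Exactness at $\widetilde{\mathcal H}(M)$.} Elements in the image map to $\id$. Conversely, suppose $(W,F)$ has $f=F|_{\partial_2W}\simeq\id$ via a based homotopy $h$. Form the closed manifold $\Sigma=-(M\times I)\cup_{\partial_1}W\cup_{\partial_2}(M\times I)$ with the map $\mathrm{pr}_1\cup F\cup h$, and let $[N,g]=[\Sigma]\in\Omega^{\B}_{m+1}(M)$. We claim that $(W,F)$ is equivalent to the image $(M\times I\sqcup\Sigma,\mathrm{pr}_1\sqcup(\mathrm{pr}_1\cup F\cup h))$ of $[N,g]$.

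To see this, use the same homotopy $h$. The test manifold for equivalence is
\[
-(M\times I\sqcup\Sigma)\cup_{\partial_1}W\cup_{\partial_2}(M\times I),
\]
where the $\partial_1$ gluing and the right-hand $M\times I$ carry the map $h$. This manifold is bordant over $M$ to $\Sigma\sqcup -\Sigma$, which vanishes in bordism. Here we use that gluing $M\times I$ to $M\times I$ along an end gives $M\times I$ again, and the maps agree.

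The step needing the most care is the bookkeeping of orientations and gluings in this last identification. The issue is which ends of $M\times I$ are identified and with which maps, together with the sign conventions $-[N,g]=[-N,g]$, so that the test manifold is exactly $\Sigma-\Sigma$ rather than $2\Sigma$. I would fix conventions carefully, reducing via the basepoint relation $(M\times I,\mathrm{pr}_1)\sim$ base point. I would then check it by a reparametrization of the collar, which shows that the union is diffeomorphic, compatibly with maps and $\B$-structures, to $\Sigma\sqcup-\Sigma$ modulo a cylinder that bounds. The $\B$-structures glue because all are fixed to $\nu$ on the ends.
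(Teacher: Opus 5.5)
Your proof is correct and is essentially the paper's argument: the same two maps, exactness at $\Aut_\bullet(M)$ by reading a bordism over $M$ from $(M,\id)$ to $(M,f)$ as an element of $\widetilde{\mathcal H}(M)$, and exactness at $\widetilde{\mathcal H}(M)$ by closing $W$ up with the homotopy $h$, where your $\Sigma$ differs from the paper's $Z=W\cup(M\times I)$ only by an extra collar carrying the constant homotopy. Because you defined $\Sigma$ as the test manifold comparing $(W,F)$ with the base point, the test manifold for $(W,F)$ against the image of $[\Sigma]$ is literally $\Sigma\sqcup-\Sigma$, so the orientation bookkeeping you flag in your last paragraph is automatic.
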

\begin{proof}
The first map sends $[N,g]$ to the class of $(M\times I\sqcup N,\ \mathrm{pr}_1\sqcup g)$, and the second sends $(W,F)$ to $[F|_{\partial_2W}]$. It is well defined, because equivalent pairs have homotopic restrictions. The composite of the first two maps is trivial, since the restriction is the identity.

If $(W,F)\mapsto[\id_M]$, then $f\simeq\id_M$ by a based homotopy $h$. Then $Z:=W\cup(M\times I)$, with the ends of $M\times I$ glued to $\partial_2W$ and $\partial_1W$, and the map $F\cup h$, is a closed $\B$-manifold over $M$, and $(W,F)$ is equivalent to the image of $[Z,F\cup h]$, because $-(M\times I\sqcup Z)\cup W\cup(M\times I)$ is bordant over $M$ to $Z-Z=0$. This is exactness at $\widetilde{\mathcal H}(M)$.

Finally $\alpha(f)=0$ means that $(M,\nu,f)$ and $(M,\nu,\id)$ are bordant over $M$. A bordism is a pair $(W,F)$ with $\partial W=-M\sqcup M$, $F|_{\partial_1W}=\id$ and $F|_{\partial_2W}=f$, that is an element of $\widetilde{\mathcal H}(M)$ mapping to $[f]$. Conversely such an element gives a bordism. This is exactness at $\Aut_\bullet(M)$.
\end{proof}

\begin{lemma}\label{HeM=HeP}
Assume (T). The map $c_\sharp\colon\widetilde{\mathcal H}(M)\to\widetilde{\mathcal H}(P)$, $(W,F)\mapsto(W,c\circ F)$, is a bijection.
\end{lemma}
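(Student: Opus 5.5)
We check well-definedness, then build an inverse by surgery and lifting: surjectivity uses Lemmas~\ref{lem:surgery} and~\ref{lem:lifting} with $d=m+1$, and injectivity uses the same lemmas with $d=m+2$, applied to the null-bordism that witnesses equivalence in $\widetilde{\mathcal H}(P)$.

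\emph{Well-definedness.} If $(W,F)$ represents a class in $\widetilde{\mathcal H}(M)$ with $F|_{\partial_2W}=f$, then $c\circ f$ is based, and $c\circ f\simeq\phi_f\circ c$ with $\phi_f=\pi(f)$. So after a homotopy rel $\partial_1W$ (allowed by homotopy extension) the pair $(W,c\circ F)$ is an element of $\widetilde{\mathcal H}(P)$. If $(W,F)\sim(W',F')$ via a homotopy $h$, then $c\circ h$ and the image of the null-bordism under $c_*\colon\Omega^{\B}_{m+1}(M)\to\Omega^{\B}_{m+1}(P)$ show that $c_\sharp(W,F)\sim c_\sharp(W',F')$.

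\emph{Surjectivity.} Take $(W,\Phi)$ in $\widetilde{\mathcal H}(P)$ with $\Phi|_{\partial_1W}=c$ and $\Phi|_{\partial_2W}=\phi\circ c$.
\begin{enumerate}
\item Connect $W$ by connected sums if necessary, and put $C=\partial_1W=-M$, on which $\Phi|_C=c$ is an isomorphism on $\pi_j$ for $j\le n$.
\item Apply Lemma~\ref{lem:surgery} with $d=m+1$ to make $\Phi_*$ an isomorphism on $\pi_j$ for $j\le n-1$, keeping the boundary fixed. The new $W$ differs from the old one by a bordism rel boundary over $P$, which gives an equivalence in $\widetilde{\mathcal H}(P)$ with $h$ constant.
\item Choose $A=\partial_2W$ and $\Phi_A=f$, where $f$ is a self-map of $M$ with $c\circ f\simeq\phi\circ c$. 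Such an $f$ exists by obstruction theory on the $m$-dimensional complex $M$: the obstructions lie in $H^{i+1}(M;\pi_i(X))$ with $i\ge n+1$, so $i+1\ge n+2>m$ when $m\le 2n$. This is the step I expect to be delicate.
\item Verify that $f$ is a homotopy equivalence. A lift $F$ of $\Phi$ extending $\id$ on $\partial_1W$ (where we use $\Phi_A=\id$) makes $W$ a bordism over $M$ from $\id$ to $f$, so $f$ has degree one. Since $c$ is $(n+1)$-connected, $f$ is an isomorphism on $\pi_1$, and Lemma~\ref{lem:PD_upgrade} shows $f$ is an equivalence.
\item The problem is that Lemma~\ref{lem:lifting} prescribes the lift on $A$ while $C$ must be the other end. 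So take $A=\partial W$ with $\Phi_A=\id\sqcup f$, and choose $C$ to be a collar copy of $M$ inside the $\id$-end: write $\partial_1W\times I$ and let $C=\partial_1W\times\{1/2\}$. Alternatively, rerun the obstruction count of Lemma~\ref{lem:lifting} directly: the obstructions lie in $H^{i+1}(W,\partial W)\cong H_{m-i}(W)$, and $H_{m-i}(W,M)=0$ in the relevant degrees, while the extra $H_{m-i}(M)$ terms are handled by the lift already given on the boundary.
\end{enumerate}
Then $(W,F)$ maps to the class of $(W,\Phi)$.

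\emph{Injectivity.} Suppose $c_\sharp(W,F)\sim c_\sharp(W',F')$ via a homotopy $h$ in $P$ from $c\circ f$ to $c\circ f'$ and a null-bordism $T$ of $-W'\cup W\cup(M\times I)$ over $P$.
\begin{enumerate}
\item Lift $h$ rel ends to a homotopy $\tilde h\colon f\simeq f'$. The obstructions lie in $H^{i+1}(M\times I,M\times\partial I)=H^{i}(M)$ with $i\ge n+1$. When $m=2n$ and $i=n+1$ the group $H^{n+1}(M)$ need not vanish. I will try to absorb this obstruction by changing $h$, using that $[M,P]$ rel ends is governed by $\pi_1$ of the mapping space, and otherwise by incorporating the cylinder into $T$ and lifting $T$ with $A=W\cup W'$.
\item Make $T$ highly connected rel its boundary using Lemma~\ref{lem:surgery} with $d=m+2$ and $C=\partial_1$-side $M\times I$.
\item Apply Lemma~\ref{lem:lifting} with $A$ equal to the rest of $\partial T$ and $\Phi_A=F'\cup F\cup\tilde h$, producing a lift $T\to M$. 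This is a null-bordism over $M$ of $-W'\cup W\cup(M\times I)$, so $(W,F)\sim(W',F')$.
\end{enumerate}

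The main obstacle throughout is matching the role of $C$ in Lemmas~\ref{lem:surgery} and~\ref{lem:lifting}. These require a connected piece of the boundary, mapped $n$-equivalently, on which the lift is not prescribed, whereas here we need the lift prescribed on all of the boundary. I would handle this by choosing $C$ as an interior collar copy of $M$, equivalently $M\times I$ with the lift prescribed only on $M\times\partial I$, exactly as the paper's example $C=M\times I$ suggests.
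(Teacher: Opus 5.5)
There is a genuine gap, and it comes from how you assign $A$ and $C$.

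In the surjectivity argument you put $C=\partial_1W$. That forces you to choose a lift $f$ of $\phi\circ c$ in advance (step 3) and then to prescribe the lift on all of $\partial W$ (step 5). Neither step works.

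\emph{Step 3.} The inequality $n+2>m$ is false throughout the range of the lemma (e.g.\ $n=3$, $m=6$). So the groups $H^{i+1}(M;\pi_i(X))$ with $n+1\le i\le m-1$ are genuine obstruction groups. For arbitrary $\phi$ there need be no such $f$ at all: for $M=S^4\times S^4$ and $\phi=\left(\begin{smallmatrix}1&1\\0&1\end{smallmatrix}\right)$, the computation in the proof of Theorem~\ref{thm:SpSp}(b) shows that no self-map induces $\phi$ on $\pi_4$, because $[\iota_4,\iota_4]\ne0$.

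\emph{Step 5.} With $A=\partial W$ the obstruction groups are $H^{i+1}(W,\partial W;\pi_i(X))\cong H_{m-i}(W;\pi_i(X))$, and these include $H_0(W;\pi_m(X))=\pi_m(X)$. Concretely, take $W=M\times I$, $\Phi=c\circ\mathrm{pr}_1$ (so $\phi=\id$), and boundary data $\id\sqcup t$ with $t$ a non-trivial element of $\ker\pi$ (a twist on $S^3\times S^3$, say). This $t$ is a legitimate choice in your step 3, yet a lift would be a homotopy $\id\simeq t$. The correct boundary value depends on $W$ and cannot be chosen independently. An interior collar copy of $M$ is not part of $\partial W$, so it cannot serve as $C$ in Lemma~\ref{lem:lifting}, whose duality step needs $\partial V=A\cup C$.

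\emph{The fix.} Lemma~\ref{lem:surgery} only asks that $\Phi|_C$ be an isomorphism on $\pi_j$ for $j\le n$, and $\phi\circ c$ has this property. So take $A=\partial_1W$ with $\Phi_A=\id_M$ and $C=\partial_2W$, do the surgery with respect to this $C$, and lift. Then \emph{define} $\widehat f$ to be the restriction of the lift to $\partial_2W$. Your step 4 shows, via Lemma~\ref{lem:PD_upgrade}, that $\widehat f$ is an equivalence. Nothing is chosen beforehand, steps 3 and 5 disappear, and this is the paper's argument.

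The same issue affects injectivity. Step 1 can be obstructed, as you note, and step 3 then puts $\tilde h$ into $\Phi_A$, which again prescribes the lift on all of $\partial T$. Your fallback is the right route and is the paper's. Take $A=W\cup W'$ glued along $\partial_1$, and $C=M\times I$ carrying $h_P$; this is an isomorphism on $\pi_j$ for $j\le n$ because its ends are. Prescribe only $F\cup F'$ on $A$, and obtain the homotopy $f\simeq f'$ as the restriction of the lift to $C$. Drop step 1 and $\tilde h$ entirely.
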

\begin{proof}
\emph{Well defined.} If $f=F|_{\partial_2W}$ is a self-equivalence of $M$, then $c\circ f=\phi_f\circ c$ is an $(n+1)$-equivalence. A homotopy $h$ in $M$ gives the homotopy $c\circ h$, and $c$ maps the null-bordism in $\Omega^{\B}_{m+1}(M)$ to one in $\Omega^{\B}_{m+1}(P)$.

\emph{Surjective.} Let $(W,F)\in\widetilde{\mathcal H}(P)$ with $F|_{\partial_2W}=\phi\circ c$. We may assume $W$ connected (join components by interior connected sums, which does not change the class). Apply Lemma~\ref{lem:surgery} with $A=\partial_1W$, $C=\partial_2W$ and $d=m+1$; note that $\Phi|_C=\phi\circ c$ is an isomorphism on $\pi_j$ for $j\le n$. After surgery in the interior, which changes $(W,F)$ within its class in $\widetilde{\mathcal H}(P)$ (the trace of a surgery rel boundary is a $\B$-cobordism rel boundary, so the closed manifold in the equivalence relation is null-bordant), we may assume that $F_*$ is an isomorphism on $\pi_j$ for $j\le\ell_{m+1}$. Lemma~\ref{lem:lifting}, applied to the lift $\id_M$ on $A=\partial_1W$, gives a lift $\widehat F\colon W\to M$ of $F$ with $\widehat F|_{\partial_1W}=\id_M$. Put $\widehat f=\widehat F|_{\partial_2W}$. Then $(W,\widehat F)$ is a $\B$-bordism over $M$ between $(M,\id)$ and $(M,\widehat f)$, so $\widehat f_*[M]=[M]$, so $\widehat f$ has degree one. Also $c\circ\widehat f\simeq\phi\circ c$ is an isomorphism on $\pi_j$ for $j\le n$, so $\widehat f$ is an isomorphism on $\pi_1$, because $n\ge2$. By Lemma~\ref{lem:PD_upgrade}, $\widehat f$ is a homotopy equivalence, so $(W,\widehat F)\in\widetilde{\mathcal H}(M)$ and $c_\sharp(W,\widehat F)=(W,c\widehat F)$ is equivalent to $(W,F)$ by the homotopy $c\widehat F\simeq F$.

\emph{Injective.} Let $(W,F),(W',F')\in\widetilde{\mathcal H}(M)$ with $c_\sharp$-images equivalent. Then there is a based homotopy $h_P\colon cf\simeq cf'$ in $P$ such that $Z_P:=-W'\cup W\cup(M\times I)$ with the map $cF'\cup cF\cup h_P$ bounds a $\B$-manifold $(V,\Phi)$ over $P$. Replacing disjoint unions by connected sums gives equivalent pairs, so we may assume $W$ and $W'$ are connected; then $Z_P$ is connected, and we may take $V$ connected. Put $A:=W\cup W'$ (the union of the two copies glued along $\partial_1=M$, so $A$ is connected) and $C:=M\times I$, so $\partial V=A\cup C$ with $A\cap C=M\times\{0,1\}$. The map $\Phi|_A$ has the lift $F\cup F'$ to $M$. The map $\Phi|_C$ is the homotopy $h_P$, which is an isomorphism on $\pi_j$ for $j\le n$ because its ends are. By Lemma~\ref{lem:surgery} we may assume $\Phi_*$ is an isomorphism for $j\le\ell_{m+2}$ ($\dim V=m+2$). By Lemma~\ref{lem:lifting} there is a lift $\widehat\Phi\colon V\to M$ extending $F\cup F'$ on $A$. Its restriction $h:=\widehat\Phi|_C\colon M\times I\to M$ is a based homotopy from $f$ to $f'$, and $(V,\widehat\Phi)$ is a null-bordism over $M$ of the closed manifold $-W'\cup W\cup(M\times I)$ with the map $F'\cup F\cup h$. So $(W,F)\sim(W',F')$ in $\widetilde{\mathcal H}(M)$.
\end{proof}

\begin{remark}
Both the obstruction groups in Lemma~\ref{lem:lifting} and the connectivity range $\ell_d$ of the surgery lemma depend on (T) and on the restriction $m\in\{2n-1,2n\}$. For $m=2n+1$ and $d=m+2$ the existence group $H_{n+1}(V,C;\pi_{n+1}(X))$ is a middle-dimensional obstruction group that surgery cannot kill, and for $m\ge2n+2$ the existence groups $H_{d-i-1}(V,C;\pi_i(X))$ with $i=n+1$ lie above $\ell_d$.
\end{remark}

\begin{lemma}\label{exactB}
There is an exact sequence of pointed sets
\[
\pi_1(\mathcal E_\bullet(P))\to\Omega^{\B}_{m+1}(P)\to\widetilde{\mathcal H}(M)\to\Aut_\bullet(P)\xrightarrow{\ \beta\ }\Omega^{\B}_m(P).
\]
\end{lemma}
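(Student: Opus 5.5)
The plan is to prove exactness for the analogous sequence with $\widetilde{\mathcal H}(P)$ in place of $\widetilde{\mathcal H}(M)$, following \cite[Lemma~2.5]{hk2}, and then transport it through the bijection $c_\sharp$ of Lemma~\ref{HeM=HeP}. So the maps are defined on $\widetilde{\mathcal H}(P)$ and composed with $c_\sharp^{-1}$:
\begin{itemize}
\item $\Omega^{\B}_{m+1}(P)\to\widetilde{\mathcal H}(P)$ sends $[N,g]$ to $(M\times I\sqcup N,\ c\circ\mathrm{pr}_1\sqcup g)$;
\item $\widetilde{\mathcal H}(P)\to\Aut_\bullet(P)$ sends $(W,F)$ to the unique $\phi$ with $F|_{\partial_2W}=\phi\circ c$;
\item $\pi_1(\mathcal E_\bullet(P))\to\Omega^{\B}_{m+1}(P)$ sends a loop $h_t$ of based self-equivalences at $\id_P$ to $[M\times S^1,\ (x,t)\mapsto h_t(c(x))]$, with the product $\B$-structure and the bounding framing on $S^1$.
\end{itemize}

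\emph{Exactness at $\Aut_\bullet(P)$.} This is the same as in Lemma~\ref{exactA}. A class $(W,F)$ is precisely a $\B$-bordism over $P$ between $(M,c)$ and $(M,\phi\circ c)$. Hence $\phi$ lies in the image exactly when $\beta(\phi)=0$. The composite is trivial for the same reason.

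\emph{Exactness at $\widetilde{\mathcal H}(P)$.} The composite $\Omega^{\B}_{m+1}(P)\to\widetilde{\mathcal H}(P)\to\Aut_\bullet(P)$ is trivial, since the restriction to $\partial_2$ of an image element is $c$. Conversely, suppose $(W,F)$ maps to $\id_P$. Then $F|_{\partial_2W}=\phi\circ c$ with $\phi\simeq\id$. A based homotopy $h$ from $\phi\circ c$ to $c$ lets us glue $M\times I$ to $W$, closing it up to $Z=W\cup(M\times I)$ over $P$. As in Lemma~\ref{exactA}, $(W,F)$ is equivalent to the image of $[Z]$. One point needs care: the equivalence relation uses a homotopy of the restrictions, which here are $c$ and $\phi\circ c$. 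This is fine because $h$ itself may be used as that homotopy.

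\emph{Exactness at $\Omega^{\B}_{m+1}(P)$.} This is where $\pi_1(\mathcal E_\bullet(P))$ enters. Suppose $[N,g]$ maps to the base point. Then there is a based self-homotopy $h$ of $c$ in $P$ such that the closed manifold
\[
-(M\times I\sqcup N)\cup(M\times I)\cup_h(M\times I)
\]
is null-bordant. This closed manifold is $-N\sqcup(M\times S^1,\ h)$, where $M\times S^1$ is glued from the cylinders, carries the product structure, and maps to $P$ via $h$. So $[N,g]=[M\times S^1,h]$, and it remains to see that such a class comes from $\pi_1(\mathcal E_\bullet(P))$. A self-homotopy of $c$ is a loop in the based mapping space $\mathrm{Map}_\bullet(M,P)$ at $c$. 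Restriction along $c$ gives a map $\mathrm{Map}_\bullet(P,P)\to\mathrm{Map}_\bullet(M,P)$. Because $P$ is an $n$-type and $(P,M)$ has only cells of dimension $\ge n+2$, this restriction is a weak equivalence (it is a fibration with fibre $\mathrm{Map}_\bullet(P/M,P)$, which is weakly contractible). In particular it is surjective on $\pi_1$, so $h$ comes from a loop in $\mathcal E_\bullet(P)$.

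For the converse, the image of a loop $h_t$ is $[M\times S^1,h]$. This maps to the class of $M\times I\sqcup M\times S^1$. That class is equivalent to $(M\times I,c\,\mathrm{pr}_1)$ by taking the homotopy to be $h$ itself: the defining closed manifold is then $M\times S^1\sqcup -M\times S^1$, which is null-bordant.

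I expect the main obstacles to be the sign and orientation bookkeeping in these gluings, and the identification of self-homotopies of $c$ with loops in $\mathcal E_\bullet(P)$. The sign issue is presumably the source of the phrase ``up to inversion'' in Theorem~A.
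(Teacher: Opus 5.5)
Your proposal is correct and follows the paper's proof essentially step for step. It uses the same three maps, transports through $c_\sharp$, and gets exactness at $\Omega^{\B}_{m+1}(P)$ from the weak equivalence $c^*\colon\mathrm{Map}_\bullet(P,P)\to\mathrm{Map}_\bullet(M,P)$, with the sign of $[M\times S^1,h]$ absorbed by reversing the loop.

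One correction to your closing remark: the ``up to inversion'' in Theorem~A does not come from orientation bookkeeping here. It comes from Lemma~\ref{invcom}, where $\gamma$ fixes the identity on $M\times\{1\}$ and reads off the equivalence on $M\times\{0\}$. This is the reverse of the map $\widetilde{\mathcal H}(M)\to\Aut_\bullet(M)$, and it gives $\gamma\circ j=\rho^{-1}$.
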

\begin{proof}
We use $\widetilde{\mathcal H}(M)\cong\widetilde{\mathcal H}(P)$ (Lemma~\ref{HeM=HeP}). The map $\widetilde{\mathcal H}(P)\to\Aut_\bullet(P)$ sends $(W,F)$ to $\phi$, where $F|_{\partial_2W}=\phi\circ c$. It is well defined, since a homotopy $f\simeq f'$ gives $\phi=\phi'$. The map $\Omega^{\B}_{m+1}(P)\to\widetilde{\mathcal H}(P)$ sends $[N,g]$ to $(M\times I\sqcup N,\ c\circ\mathrm{pr}_1\sqcup g)$. A loop $\lambda$ in $\mathcal E_\bullet(P)=\mathrm{Map}_\bullet(P,P)_{\simeq}$ based at $\id$ gives $\widehat\lambda\colon P\times S^1\to P$, and $\pi_1(\mathcal E_\bullet(P))\to\Omega^{\B}_{m+1}(P)$ sends $\lambda$ to $[M\times S^1,\ \widehat\lambda\circ(c\times\id)]$, where $M\times S^1$ carries the product of $\nu$ with the bounding $\B$-structure on $S^1$.

\emph{Exactness at $\Aut_\bullet(P)$.} Suppose $(W,F)\mapsto\phi$. Then $W$ is a bordism over $P$ from $(M,c)$ to $(M,\phi c)$, so $\beta(\phi)=0$. Conversely, if $\beta(\phi)=0$, a bordism over $P$ from $(M,c)$ to $(M,\phi c)$ is a pair in $\widetilde{\mathcal H}(P)$ with image $\phi$.

\emph{Exactness at $\widetilde{\mathcal H}(P)$.} The composite $\Omega^{\B}_{m+1}(P)\to\widetilde{\mathcal H}(P)\to\Aut_\bullet(P)$ is trivial. If $(W,F)\mapsto\id_P$, then $f=F|_{\partial_2W}\simeq c$ by a based homotopy $h$, and $Z=W\cup(M\times I)$, with the ends of $M\times I$ glued to $\partial_2W$ and $\partial_1W$, and the map $F\cup h$, is a closed manifold over $P$. As in the proof of Lemma~\ref{exactA}, $(W,F)$ is equivalent to the image of $[Z,F\cup h]$.

\emph{Exactness at $\Omega^{\B}_{m+1}(P)$.} A class $[N,g]$ maps to the base point iff there is a homotopy $h$ from $c$ to $c$, that is, a loop $\lambda$ in $\mathrm{Map}_\bullet(M,P)$ based at $c$, such that $(M\times I\sqcup N)\cup(M\times I)$ with the map $c\circ\mathrm{pr}_1\sqcup g\cup h$ vanishes in $\Omega^{\B}_{m+1}(P)$. This closed manifold is $N\sqcup M\times S^1$ with the maps $g$ and the loop, so the condition says $[N,g]=-[M\times S^1,h]$. Since $c^*\colon\mathrm{Map}_\bullet(P,P)\to\mathrm{Map}_\bullet(M,P)$ is a weak equivalence ($P$ is an $n$-type and $c$ is $(n+1)$-connected), every loop $\lambda$ in $\mathrm{Map}_\bullet(M,P)$ at $c$ is of the form $c^*$ of a loop in $\mathcal E_\bullet(P)$ at $\id$, and $-[M\times S^1,h]=[M\times S^1,\bar h]$ for the reversed loop. Hence the kernel is the image of $\pi_1(\mathcal E_\bullet(P))$.
\end{proof}

Write $\theta\colon\Omega^{\B}_{m+1}(P)\to\widetilde{\mathcal H}(M)$ for the composite of the map of Lemma~\ref{exactB} with the bijection of Lemma~\ref{HeM=HeP}, and $\iota\colon\Omega^{\B}_{m+1}(M)\to\widetilde{\mathcal H}(M)$ for the first map of Lemma~\ref{exactA}. Then $\theta\circ c_*=\iota$ by definition.

\begin{lemma}\label{lem:fibres}
For $x,x'\in\Omega^{\B}_{m+1}(P)$ we have $\theta(x)=\theta(x')$ if and only if $x-x'$ lies in the image of $\pi_1(\mathcal E_\bullet(P))\to\Omega^{\B}_{m+1}(P)$.
\end{lemma}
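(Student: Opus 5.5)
Exactness of Lemma~\ref{exactB} at $\Omega^{\B}_{m+1}(P)$ only identifies the fibre of $\theta$ over the base point. To get the general fibre I would compare $\theta(x)$ and $\theta(x')$ directly, working in $\widetilde{\mathcal H}(P)$ via the bijection $c_\sharp$ of Lemma~\ref{HeM=HeP}. This is legitimate because $\theta(x)=\theta(x')$ in $\widetilde{\mathcal H}(M)$ if and only if the two images agree in $\widetilde{\mathcal H}(P)$. I do not want to rely on the group structure of $\widetilde{\mathcal H}$, since the paper uses only the pointed set structure. Instead I would unwind the equivalence relation with the explicit representatives: $x=[N,g]$ maps to $(M\times I\sqcup N,\,c\circ\mathrm{pr}_1\sqcup g)$, and likewise for $x'=[N',g']$.

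Both representatives restrict to $c$ on $\partial_2$. By definition they are equivalent if and only if there is a based homotopy $h\colon c\simeq c$ in $P$ such that the closed manifold
\[
-(M\times I\sqcup N')\cup_{\partial_1}(M\times I\sqcup N)\cup_{\partial_2}(M\times I),
\]
with map $c\,\mathrm{pr}_1\cup c\,\mathrm{pr}_1\cup h$ (and $g$, $g'$ on the closed pieces), vanishes in $\Omega^{\B}_{m+1}(P)$. The three cylinders glue to $M\times S^1$. Up to reparametrization, the map on it is the loop $h$ in $\mathrm{Map}_\bullet(M,P)$ at $c$, and the $\B$-structure is the product structure with the bounding structure on $S^1$. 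The orientation sign on the reversed cylinder must be checked so that the glued manifold carries that product structure. So the condition reads
\[
[N,g]-[N',g']+[M\times S^1,h]=0 .
\]

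Now I would use, exactly as in the proof of Lemma~\ref{exactB}, that $c^*\colon\mathrm{Map}_\bullet(P,P)\to\mathrm{Map}_\bullet(M,P)$ is a weak equivalence. Every such loop $h$ is therefore $c^*\lambda$ for some $\lambda\in\pi_1(\mathcal E_\bullet(P))$. The class $[M\times S^1,h]$ depends only on the homotopy class of the loop, and reversing the loop negates it. Hence the condition says precisely that $x-x'$ lies in the image of $\pi_1(\mathcal E_\bullet(P))$. For the converse direction, given $x-x'=[M\times S^1,\widehat\lambda(c\times\id)]$, I would take $h=c^*\bar\lambda$, the reversed loop. The same closed manifold then vanishes, which gives the equivalence.

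The main obstacle is the bookkeeping in identifying the glued closed manifold with $N\sqcup -N'\sqcup M\times S^1$ carrying the correct $\B$-structure and orientation. In particular one must check that the $\B$-structure on $M\times S^1$ obtained by gluing is the one with the bounding framing on $S^1$. This holds because the structure is pulled back from $M$ along the projection, so it extends over $M\times D^2$ as a structure, although the map need not extend. Everything else is formal.
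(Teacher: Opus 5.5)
Your proposal is correct and follows essentially the same route as the paper. The paper likewise reruns the exactness argument of Lemma~\ref{exactB} with a general $x'$ in place of the base point: it identifies the glued closed manifold with $N\sqcup(-N')\sqcup M\times S^1$ carrying the loop $h$, and then uses the weak equivalence $c^*$ and loop reversal to land in the image of $\pi_1(\mathcal E_\bullet(P))$. Your extra checks on the orientation of the reversed cylinder and on the bounding $\B$-structure on $M\times S^1$ make explicit what the paper leaves implicit.
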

\begin{proof}
The argument for exactness at $\Omega^{\B}_{m+1}(P)$ in Lemma~\ref{exactB} applies verbatim: if $x=[N,g]$ and $x'=[N',g']$, then $\theta(x)=\theta(x')$ means that for some based loop $h$ in $\mathrm{Map}_\bullet(M,P)$ at $c$ the closed manifold $N\sqcup(-N')\sqcup M\times S^1$ with the maps $g$, $g'$ and $h$ is null-bordant over $P$, that is $x-x'=-[M\times S^1,h]$. The converse is clear.
\end{proof}

\subsection{The map $\gamma$}
\label{subsec:gamma}

Let $\xi\in\widehat\Omega^{\B}_{m+1}(P,M)$. By Lemma~\ref{lem:normal_form} choose a representative $(W,F)$ in standard form, with $\partial W=\partial_1W\sqcup N$, $\partial_1W=-M$, $F|_{\partial_1W}=c$ and $g=F|_N\colon N\to M$ of degree one. Apply Lemma~\ref{lem:surgery} with $A=N$, $C=\partial_1W$ and $d=m+1$. This replaces $(W,F)$ by a bordant triple rel boundary, hence the same class $\xi$, and we may assume $F_*$ is an isomorphism on $\pi_j$ for $j\le\ell_{m+1}$. By Lemma~\ref{lem:lifting} there is a lift $r\colon W\to M$ of $F$ extending $g$ on $N$. Put
\[
\gamma(\xi):=[\,r|_{\partial_1W}\,]\in[M,M]_\bullet .
\]

\begin{lemma}\label{gamma_well_defined}
The class $\gamma(\xi)$ is a well-defined element of $\Aut_\bullet(M)$, independent of the representative, the surgeries and the lift. Moreover $\pi(\gamma(\xi))=1$.
\end{lemma}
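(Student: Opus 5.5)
The proof has four steps: (i) $\gamma(\xi)$ is a homotopy equivalence with trivial image under $\pi$; (ii) it does not depend on the lift; (iii) it does not depend on the surgeries; (iv) it does not depend on the standard form representative.

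\emph{Step (i).} Put $f:=r|_{\partial_1W}$, and note first that $c\circ f\simeq F|_{\partial_1W}=c$. So $f$ induces the identity on $\pi_1$, and indeed on $\pi_j$ for $j\le n$, by the bijection $c^*\colon[P,P]_\bullet\to[M,M]_\bullet\to[M,P]_\bullet$. Next, $(W,r)$ is a $\B$-bordism over $M$ between $(M,f)$ and $(N,g)$. Hence $f_*[M]=g_*[N]=[M]$, and $f$ has degree one. Lemma~\ref{lem:PD_upgrade} then shows that $f$ is a homotopy equivalence, and it preserves orientation. Since $c\circ f\simeq c=\id_P\circ c$, uniqueness of $\phi_f$ gives $\pi(f)=1$.

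\emph{Steps (ii) and (iii).} Fix the surgered representative and take two lifts $r,r'$ extending $g$ on $N$. Apply the relative lifting argument to $V=W\times I$ with $\Phi=F\circ\mathrm{pr}_1$. On $A=W\times\{0,1\}\cup N\times I$ we are given $r\cup r'\cup(g\circ\mathrm{pr}_1)$, and $C=\partial_1W\times I$. The maps $\Phi|_C$ are isomorphisms on $\pi_j$ for $j\le n$, and since $W$ already satisfies the connectivity of Lemma~\ref{lem:surgery}, $(W\times I,\partial_1W\times I)$ is $\ell_{m+1}$-connected. I expect this bound to be enough. The Lefschetz dual degrees are $d-i-1\le m+2-n-3$, and for $d=m+2$ this is at most $m-n-1\le\ell_{m+1}$. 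The obstruction-theoretic argument of Lemma~\ref{lem:lifting} therefore applies, and no new surgery on $W\times I$ is needed. The resulting lift restricted to $\partial_1W\times I$ is a based homotopy $r|\simeq r'|$, which proves (ii). Surgeries performed for (iii) are absorbed in step (iv), since two surgered versions are bordant rel boundary.

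\emph{Step (iv).} Let $(W,F)$ and $(W',F')$ be standard form representatives of $\xi$, each surgered and lifted to $r$ and $r'$. Lemma~\ref{lem:normal_form} gives an $(m+2)$-manifold $T$ with
\[
\partial T=W\cup W'\cup(M\times I)\cup Y_N
\]
and a map $\Phi\colon T\to P$. Set $A=W\cup W'\cup Y_N$, carrying the lift $r\cup r'\cup G_N$. This requires $c\circ G_N$ to agree with $\Phi|_{Y_N}$, which holds since the map on $Y_N$ was given over $M$. Set $C=M\times I$, with map $c\circ\mathrm{pr}_1$. Make $T$ connected by connected sums, then apply Lemma~\ref{lem:surgery} with $d=m+2$ rel boundary and Lemma~\ref{lem:lifting}. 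This yields a lift on $T$ whose restriction to $C=M\times I$ is a based homotopy from $r|_{\partial_1W}$ to $r'|_{\partial_1W'}$. The basepoint arc convention keeps the homotopy based.

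The main obstacle I expect is step (iv). The issue is making $T$ and the boundary decomposition fit the hypotheses of Lemma~\ref{lem:surgery}: $C$ must be connected with $\Phi|_C$ a $\pi_{\le n}$-isomorphism, and $A\cap C$ must be exactly $\partial_1W\sqcup\partial_1W'$, so that $A$ and $C$ meet correctly. Orientation signs, $-M$ versus $M$, also need to be tracked in the gluing.
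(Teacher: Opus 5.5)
Your steps (i) and (iv) are the paper's proof. Step (i) uses degree one together with Lemma~\ref{lem:PD_upgrade}. Step (iv) takes the manifold $T$ of Lemma~\ref{lem:normal_form} with $A=W\cup W'\cup Y_N$ and $C=M\times I$, does surgery in the interior of $T$, and applies Lemma~\ref{lem:lifting}. The hypotheses you flag as the main obstacle in (iv) hold trivially: $C=M\times I$ is connected, $\Phi|_C=c\circ\mathrm{pr}_1$, and $A\cap C=M\times\{0,1\}$.

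\textbf{Step (ii) fails for $m=2n$.} Because $X$ is only $n$-connected, the obstructions lie in $H^{i+1}(V,A;\pi_i(X))$ for $i\ge n+1$. For $d=m+2$ the dual degree $d-i-1$ therefore reaches $m-n$, not $m-n-1$. The term $i=n+1$ really occurs: $\pi_{n+1}(X)\cong\pi_{n+1}(M)$ since $P$ is an $n$-type, and for example $\pi_4(S^3\times S^3)\neq0$.
\begin{itemize}
\item For $m=2n-1$ you have $m-n=n-1=\ell_{m+1}$, so your shortcut works.
\item For $m=2n$, which covers every $S^p\times S^p$ example, the relevant group is $H_n(W\times I,\partial_1W\times I;\pi_{n+1}(X))\cong H_n(W,\partial_1W;\pi_{n+1}(X))$. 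Surgery on the $(2n+1)$-manifold $W$ only gives connectivity $\ell_{m+1}=n-1$. Degree $n$ is the middle dimension of $W$, so the kernel of $\pi_n(W)\to\pi_n(P)$ generally cannot be killed there.
\end{itemize}
Consequently the difference class of $r$ and $r'$ in $H^{n+1}(W,N;\pi_{n+1}(X))$ may be non-zero. The two lifts need not be homotopic rel $N$ on all of $W$, and your claim that no new surgery on $W\times I$ is needed is unjustified.

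\textbf{The fix.} What you actually need is weaker: only the restrictions of $r$ and $r'$ to $\partial_1W$ must be homotopic. The paper obtains this by taking $T=W\times I$ in the same argument as step (iv). It does surgery in the interior of $W\times I$, rel boundary, so the pieces $A=W\times\{0,1\}\cup N\times I$ and $C=\partial_1W\times I$ and the given maps on them are unchanged. Afterwards $(T,C)$ is $\ell_{m+2}=(m-n)$-connected, which is exactly what Lemma~\ref{lem:lifting} requires for $d=m+2$. The lift restricted to $C$ is then the based homotopy $r|_{\partial_1W}\simeq r'|_{\partial_1W}$.

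Equivalently, you can simply delete step (ii). Your step (iv), applied with $(W',F')=(W,F)$ and two different lifts, already covers this case. With that change your proof is complete and coincides with the paper's.
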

\begin{proof}
Put $f=r|_{\partial_1W}$. Since $c\circ f\simeq F|_{\partial_1W}=c$, the map $f$ induces isomorphisms on $\pi_i$ for $i\le n$, in particular on $\pi_1$, and $\pi(f)=1$ once we know $f$ is an equivalence. The triple $(W,r)$ is a $\B$-bordism over $M$ between $(N,g)$ and $(M,f)$, since $\partial W=-M\sqcup N$. So $f_*[M]=g_*[N]=[M]$, and $f$ has degree one. By Lemma~\ref{lem:PD_upgrade}, $f$ is a homotopy equivalence.

For independence, let $(W,F,r)$ and $(W',F',r')$ be two such choices for the same class. Lemma~\ref{lem:normal_form} gives a $\B$-manifold $T$ over $P$ with $\partial T=W\cup W'\cup(M\times I)\cup Y_N$, with $W$, $W'$ and $Y_N$ carrying the lifts $r$, $r'$ and $G_N$; these agree on $N$ and $N'$ by construction. Put $A=W\cup W'\cup Y_N$ and $C=M\times I$. After surgery in the interior of $T$ (Lemma~\ref{lem:surgery}; here $\dim T=m+2$ and $\Phi|_C=c\circ\mathrm{pr}_1$), Lemma~\ref{lem:lifting} gives a lift $\widehat\Phi\colon T\to M$ extending the given map on $A$. Its restriction to $C=M\times I$ is a based homotopy from $r|_{\partial_1W}$ to $r'|_{\partial_1W'}$. The same argument applies with $W=W'$ and different lifts $r,r'$, taking $T=W\times I$.
\end{proof}

\begin{lemma}\label{alpha_gamma}
$\alpha\circ\gamma=\partial\colon\widehat\Omega^{\B}_{m+1}(P,M)\to\widehat\Omega^{\B}_m(M)$.
\end{lemma}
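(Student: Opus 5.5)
We take a representative $(W,F)$ of $\xi$ in standard form, with $\partial W=-M\sqcup N$, and after surgery a lift $r\colon W\to M$ of $F$ with $r|_N=g$, as in the construction of $\gamma$. Put $f=r|_{\partial_1W}$, so that $\gamma(\xi)=[f]$. The triple $(W,r)$ is a $\B$-bordism over $M$ between $(M,f)$ and $(N,g)$; in $\Omega^{\B}_m(M)$ this reads $[N,g]-[M,f]=0$, that is, $[N,g]=[M,f]$. Here $\partial W=-M\sqcup N$ with the orientation conventions of the standard form. We will take care to use the same orientation convention that was used in the degree computation $f_*[M]=g_*[N]$ of Lemma~\ref{gamma_well_defined}.

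Next we compute $\partial\xi$. By definition $\partial\xi=[\partial W,F|_{\partial W}]=[-M,c]+[N,g]$, with $c$ regarded here as the identity of $M$ under the mapping-cylinder identification $M\subset P$. Indeed, $F|_{\partial_1W}=c$ is literally the inclusion $M\subset P$, so as a map into $M$ it is $\id_M$. Hence
\[
\partial\xi=[N,g]-[M,\id]=[M,f]-[M,\id]=\alpha(f)=\alpha(\gamma(\xi)).
\]

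The only point needing care, and the one I expect to be the main obstacle, is the bookkeeping of what "restriction to the boundary" means in the relative group. We must check three things. First, the boundary map uses the reference map $F|_{\partial W}$ into $M$, which requires $F(\partial W)\subset M$. This holds in standard form because $F|_N=g$ lands in $M$ and $F|_{\partial_1W}=c$ is the inclusion. Second, replacing $(W,F)$ by the surgered representative does not change $\partial\xi$, since the surgeries are in the interior and rel boundary. Third, the lift $r$ agrees with $F$ on $N$ by construction, while on $\partial_1W$ it is only homotopic to $c$ through the homotopy $c\circ r\simeq F$. The bordism identity uses $r$ on all of $\partial W$, whereas $\partial\xi$ uses $F$. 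On $\partial_1W$ these differ: $f$ versus $\id$. That difference is exactly what produces $\alpha$, so the computation above already accounts for it.

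The sign convention $-[N,g]=[-N,g]$ must be applied consistently. If the orientation convention yields $\partial\xi=-\alpha(\gamma(\xi))$ instead, we will recheck it against the degree computation: $\partial\xi$ has degree $-1+1=0$. This matches the paper's remark that the two maps $\Omega^{\B}_{m+1}(P)\to\Aut_\bullet(M)$ agree only up to inversion. With the standard-form orientation $\partial_1W=-M$ as stated, I expect the identity to hold on the nose.
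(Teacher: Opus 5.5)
Your proof is correct and is essentially the paper's own argument. You read off $\partial\xi=[N,g]-[M,\id]$ from the standard-form representative, and you use the lift $r$ as a bordism over $M$ to get $[N,g]=[M,f]$, which gives $\partial\xi=\alpha(\gamma(\xi))$.

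The orientation hedging at the end is unnecessary: with $\partial W=-M\sqcup N$ the identity holds on the nose. The inversion in Lemma~\ref{invcom} is a separate phenomenon (a group inverse in $\Aut_\bullet(M)$), not a sign in bordism, so it has no bearing here.
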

\begin{proof}
With $(W,F,r)$ as in the definition of $\gamma$ we have $\partial\xi=[\partial W,F|_{\partial W}]=[N,g]-[M,\id]$. The lift $r$ gives a bordism over $M$ between $(N,g)$ and $(M,f)$ with $f=\gamma(\xi)$, so $[N,g]=[M,f]$ in $\Omega^{\B}_m(M)$ and $\alpha(\gamma(\xi))=[M,f]-[M,\id]=\partial\xi$.
\end{proof}

\begin{lemma}\label{invcom}
Let $j\colon\Omega^{\B}_{m+1}(P)\to\widehat\Omega^{\B}_{m+1}(P,M)$ be the map of Lemma~\ref{bordism exact}, and let $\rho\colon\Omega^{\B}_{m+1}(P)\to\Aut_\bullet(M)$ be the composite of $\theta$ with the map $\widetilde{\mathcal H}(M)\to\Aut_\bullet(M)$ of Lemma~\ref{exactA}. Then $\gamma(j(x))=\rho(x)^{-1}$ for all $x\in\Omega^{\B}_{m+1}(P)$.
\end{lemma}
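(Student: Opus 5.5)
We unwind both sides on a common representative and compare the two self-equivalences they produce. Let $x=[N,g]\in\Omega^{\B}_{m+1}(P)$.

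\emph{The side $\rho(x)$.} By definition $\theta(x)\in\widetilde{\mathcal H}(P)$ is represented by $(M\times I\sqcup N,\ c\circ\mathrm{pr}_1\sqcup g)$. To pass to $\widetilde{\mathcal H}(M)$ we use the surjectivity argument of Lemma~\ref{HeM=HeP}.
\begin{itemize}
\item Make the cobordism connected by an interior connected sum, giving $W_0=(M\times I)\connsum N$ with map $F_0$.
\item Do surgery in the interior of $W_0$ (Lemma~\ref{lem:surgery} with $A=M\times\{0\}$, $C=M\times\{1\}$).
\item Lift rel $M\times\{0\}$ to $\widehat F\colon W_0\to M$ with $\widehat F|_{M\times 0}=\id$ (Lemma~\ref{lem:lifting}).
\end{itemize}
Then $\rho(x)=[\widehat F|_{M\times 1}]=:f$.

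\emph{The side $\gamma(j(x))$.} Here $j(x)$ is $(N,g)$ regarded as a relative class with empty boundary. Following Lemma~\ref{lem:normal_form}, its standard form representative is $W_1=N\sqcup M\times I$, made connected. It has $\partial_1W_1=-M\times\{0\}$ carrying $c$, and $N_1=M\times\{1\}$ carrying $\id$. So $W_1$ is the same manifold $W_0$, but with the roles of the ends exchanged. Lemma~\ref{gamma_well_defined} lets us use any surgeries and any lift, so we may reuse the surgered $W_0$. We lift rel the end carrying $\id$, getting $r\colon W_0\to M$ with $r|_{M\times1}=\id$, and then $\gamma(j(x))=[r|_{M\times0}]=:f'$.

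\emph{Comparing $f$ and $f'$.} We must show $f'\circ f\simeq\id$. Consider $\widehat F\circ\ldots$; the cleaner route is to compose $r$ with $f$. The map $f\circ r\colon W_0\to M$ restricts to $f$ on $M\times1$ and to $f\circ f'$ on $M\times 0$. It lifts $c\circ f\circ r\simeq\phi_f\circ F_0=F_0$, using $\pi(f)=1$ from Lemma~\ref{gamma_well_defined} and exactness.

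Now take $T=W_0\times I$, which has dimension $m+2$ and maps to $P$ by $F_0\circ\mathrm{pr}_1$. On the part $A=W_0\times\{0,1\}\cup(M\times1)\times I$ we prescribe $\widehat F$ on $W_0\times0$, $f\circ r$ on $W_0\times1$, and the constant homotopy at $f$ on $(M\times 1)\times I$. We take $C=(M\times0)\times I$. After surgery in the interior of $T$, which keeps the boundary fixed, Lemma~\ref{lem:lifting} extends this to a lift of $T$. Its restriction to $C$ is a based homotopy from $\id$ to $f\circ f'$.

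Hence $f\circ f'\simeq\id$. By Lemma~\ref{gamma_well_defined} both maps are equivalences, so $f'=f^{-1}$, i.e.\ $\gamma(j(x))=\rho(x)^{-1}$.

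\emph{Main obstacle.} The delicate point is the homotopy $c\circ f\circ r\simeq F_0$ rel the relevant ends. The issue is the based homotopy $\phi_f\simeq\id_P$: it must be chosen compatibly with the stationarity requirement in the definition of a lift. If it cannot be made stationary on $M\times1$ directly, we would first try absorbing the homotopy into a collar of that end, where it costs nothing for bordism.

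A second point to verify is that the surgery of Lemma~\ref{lem:surgery} applies to $T$. This requires $\Phi|_C=c\circ\mathrm{pr}_1$ on $(M\times 0)\times I$, which holds, and the requirement that $C$ be connected, which holds after the connected-sum step.
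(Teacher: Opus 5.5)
Your route differs from the paper's and can be completed. As written, however, the step where Lemma~\ref{lem:lifting} is applied to $T=W_0\times I$ is not justified, and this is exactly the point you leave open. To lift rel $A$ you need $c\circ\Phi_A$ to be homotopic on all of $A$ to $F_0\circ\mathrm{pr}_1|_A$. Equivalently, you need a homotopy $c\circ\widehat F\simeq c\circ f\circ r$ on $W_0$ that is constant at $c\circ f$ on $M\times1$.

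What this requires is not that a homotopy $c\circ f\circ r\simeq F_0$ be stationary on $M\times1$; that is impossible, since $c\circ f\neq c$ there. It requires that its restriction to $M\times1$ be homotopic rel endpoints to $\lambda_1:=H_0|_{M\times1}\colon c\circ f\simeq c$. Here $H_0\colon c\circ\widehat F\simeq F_0$ is the lifting homotopy of $\widehat F$, stationary on $M\times0$. A homotopy built from an arbitrary $\phi_f\simeq\id_P$ differs from $\lambda_1$ by a loop in $\mathrm{Map}_\bullet(M,P)$ at $c$, and nothing in your argument makes that loop trivial.

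Your collar fallback does not come for free either. Absorbing such a loop into a collar changes the class of $(W_0,F_0)$ by the class of $M\times S^1$ carrying the loop. That replaces $x$ by $x$ plus an element of the image of $\kappa$. The fact that $\gamma\circ j$ is blind to such a change is derived from the present lemma, in Corollary~\ref{im}, so using it here would be circular.

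The fix is to choose the homotopy concretely. Let $H_1\colon c\circ r\simeq F_0$ be the lifting homotopy of $r$, stationary on $M\times1$. Take $\lambda_1\circ(r\times\id_I)$ followed by $H_1$ as the homotopy $c\circ f\circ r\simeq F_0$. On $M\times1$, where $r=\id$, this is $\lambda_1$ followed by a constant path. Together with $H_0$ on $W_0\times0$ and a homotopy rel endpoints on $(M\times1)\times I$, it gives the required homotopy on $A$. Surgery on $T$ rel $\partial T$ and Lemma~\ref{lem:lifting} then yield $\id\simeq f\circ f'$, as you claim. Note also that $\lambda_1$ is the direct witness of $\pi(f)=1$; Lemma~\ref{gamma_well_defined} concerns values of $\gamma$, not $f=\rho(x)$.

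The paper needs no new lifting problem at this point. It writes down the lift $r_0=\widehat f^{-1}\circ\widehat F$ extending $\id$ on $M\times1$. It repairs the same kind of loop on $M\times1$ by re-choosing the homotopy $c\circ\widehat f^{-1}\simeq c$, using that precomposition is bijective on $\pi_1$ of the mapping space. It then reads off $\gamma(j(x))=[\widehat f^{-1}]$ from the independence of the lift in Lemma~\ref{gamma_well_defined}. Your version instead re-runs that uniqueness argument for the two lifts $\widehat F$ and $f\circ r$, which agree on $M\times1$. It uses the invertibility of $f$ only in the last line, at the cost of an extra surgery-and-lifting step in dimension $m+2$.
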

\begin{proof}
Let $x=[N,g]$ and let $W_0$ be the interior connected sum of $M\times I$ and $N$ with the map $c\circ\mathrm{pr}_1\sharp g$. After surgery in the interior as in Lemma~\ref{lem:surgery} (with $C=M\times\{0\}$, equivalently $C=M\times\{1\}$) we obtain $(W,F)$ with $F_*$ an isomorphism on $\pi_j$ for $j\le\ell_{m+1}$, which represents $\theta(x)$ in $\widetilde{\mathcal H}(P)$ and, in standard form with $\partial_1W=M\times\{0\}$ and $N'=M\times\{1\}$, $g'=\id_M$, represents $j(x)$ in the relative group.

By the construction of the bijection in Lemma~\ref{HeM=HeP}, $\rho(x)=[\widehat f]$, where $\widehat F\colon W\to M$ is a lift of $F$ extending $\id_M$ on $M\times\{0\}$ (it exists by Lemma~\ref{lem:lifting}) and $\widehat f=\widehat F|_{M\times1}$. Since $F|_{M\times1}=c$, we have $c\circ\widehat f\simeq c$, so $\widehat f$ is a homotopy equivalence in $\ker\pi$, and $c\circ\widehat f^{-1}\simeq c\circ\widehat f\circ\widehat f^{-1}\simeq c$. Put $r_0=\widehat f^{-1}\circ\widehat F$. Then $c\circ r_0\simeq c\circ\widehat F\simeq F$, and $r_0|_{M\times1}=\widehat f^{-1}\circ\widehat f\simeq\id_M$. Using the homotopy extension property we change $r_0$ by a homotopy so that it restricts to $\id_M$ on $M\times\{1\}$. The resulting homotopy $c\circ r_0\simeq F$ restricts on $M\times\{1\}$ to a loop in $\mathrm{Map}(M,P)$ based at $c$. Since $\widehat f$ is a homotopy equivalence, precomposition with $\widehat f^{-1}$ is a bijection on $\pi_1$ of the mapping space, so changing the homotopy $c\circ\widehat f^{-1}\simeq c$ by a suitable loop makes this loop trivial. Then $r_0$ is a lift of $F$ extending $\id_M$ on $M\times\{1\}$ in the sense of the definition, and by Lemma~\ref{gamma_well_defined} the class $\gamma(j(x))$ may be computed with it. Hence $\gamma(j(x))=[r_0|_{M\times0}]=[\widehat f^{-1}]=\rho(x)^{-1}$.
\end{proof}

\begin{corollary}\label{im}
The sequence of pointed sets
\[
\pi_1(\mathcal E_\bullet(P))\xrightarrow{\ \ j\circ\kappa\ }\widehat\Omega^{\B}_{m+1}(P,M)\xrightarrow{\ \gamma\ }\Aut_\bullet(M)\xrightarrow{\ \pi\ }\Aut_\bullet(P)
\]
is exact, where $\kappa\colon\pi_1(\mathcal E_\bullet(P))\to\Omega^{\B}_{m+1}(P)$ is the map of Lemma~\ref{exactB}. In particular $\ker\pi=\operatorname{im}\gamma$.
\end{corollary}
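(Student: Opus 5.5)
We have to prove exactness at two places: at $\Aut_\bullet(M)$, i.e.\ $\ker\pi=\operatorname{im}\gamma$, and at $\widehat\Omega^{\B}_{m+1}(P,M)$, i.e.\ the fibre of $\gamma$ over $1$ is the image of $j\circ\kappa$. The plan is to reduce both to the lemmas already proved: Lemma~\ref{gamma_well_defined}, Lemma~\ref{invcom}, Lemma~\ref{HeM=HeP} and Lemma~\ref{exactB}.

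\textbf{Exactness at $\Aut_\bullet(M)$.} Lemma~\ref{gamma_well_defined} already gives $\operatorname{im}\gamma\subset\ker\pi$. For the converse, let $f\in\ker\pi$, so $c\circ f\simeq c$ by a based homotopy $h$. Take $W=M\times I$ with $F=c\circ\mathrm{pr}_1$. Let $\partial_1W=-M\times\{0\}$ and $N=M\times\{1\}$, but with reference map $g=f^{-1}$ on $N$, adjusting $F$ near $M\times\{1\}$ by the homotopy $c\circ f^{-1}\simeq c$. This is a relative class in standard form, since $g$ has degree one, and it lies in $\widehat\Omega^{\B}_{m+1}(P,M)$.

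The surgery step is unnecessary here, because $F_*$ is already an isomorphism on $\pi_j$ for $j\le n$. A lift extending $g$ on $N$ is $r=f^{-1}\circ\mathrm{pr}_1$, suitably modified by the homotopy. Then $\gamma$ of this class is $[f^{-1}]$. Since $\ker\pi$ is a subgroup, applying the construction to $f^{-1}$ gives $f$.

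The delicate point in this step is making the strict conditions $F|_{\partial_1W}=c$ and $c\circ r\simeq F$ rel $N$ hold simultaneously. I would do this by letting $F$ be the concatenation of $c\circ\mathrm{pr}_1$ with the reversed homotopy, and letting $r$ be $f^{-1}$ composed with the corresponding path in $M$. An alternative is Lemma~\ref{invcom}: $f\in\ker\pi$ means $c_\sharp$ applied to a suitable element of $\widetilde{\mathcal H}(M)$ maps to $\id_P$. But $f$ need not satisfy $\alpha(f)=0$, so I prefer the direct construction.

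\textbf{Exactness at $\widehat\Omega^{\B}_{m+1}(P,M)$.} First, $\gamma\circ j\circ\kappa$ is trivial. By Lemma~\ref{invcom}, $\gamma(j(\kappa\lambda))=\rho(\kappa\lambda)^{-1}$, and $\rho\circ\kappa$ is trivial because $\theta\circ\kappa$ is trivial by Lemma~\ref{exactB}.

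Conversely, suppose $\gamma(\xi)=1$. Take a standard form representative $(W,F)$ with lift $r$ extending $g$ on $N$; then $r|_{\partial_1W}\simeq\id_M$. Modify $r$ by the homotopy extension property so that $r|_{\partial_1W}=\id$ strictly, which also changes the homotopy $c\circ r\simeq F$ on $\partial_1W$ by a loop. Then $(W,r)$ is a bordism over $M$ from $(M,\id)$ to $(N,g)$, so $\partial\xi=[N,g]-[M,\id]=0$ in $\Omega^{\B}_m(M)$. By Lemma~\ref{bordism exact}, $\xi=j(x)$ for some $x\in\Omega^{\B}_{m+1}(P)$. Lemma~\ref{invcom} then gives $\rho(x)=1$.

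\textbf{The main obstacle.} From $\rho(x)=1$ we must conclude that $x\in\operatorname{im}\kappa$ modulo $\ker j=\operatorname{im}(\Omega^{\B}_{m+1}(M)\to\Omega^{\B}_{m+1}(P))$. This is the step I expect to be hardest. By Lemma~\ref{exactA}, $\rho(x)=1$ says $\theta(x)=\iota(y)$ for some $y\in\Omega^{\B}_{m+1}(M)$. Since $\iota=\theta\circ c_*$, we get $\theta(x)=\theta(c_*y)$. Lemma~\ref{lem:fibres} then gives $x-c_*y\in\operatorname{im}\kappa$. As $j\circ c_*=0$, it follows that $\xi=j(x)=j(x-c_*y)\in\operatorname{im}(j\circ\kappa)$.

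Finally, ``$\ker\pi=\operatorname{im}\gamma$'' is exactly the first step.
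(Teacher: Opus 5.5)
Your proof is correct and follows the paper's argument essentially step for step. For $\ker\pi\subset\operatorname{im}\gamma$ you use the cylinder $M\times I$ over a homotopy in $P$ with lift $f^{\pm1}\circ\mathrm{pr}_1$, and for exactness at the relative group you use Lemmas~\ref{invcom}, \ref{bordism exact}, \ref{exactA} and \ref{lem:fibres}. The differences are cosmetic: the detour through $f^{-1}$ is unnecessary, since putting $g=f$ on $M\times\{1\}$ with a homotopy from $c$ to $c\circ f$ gives $\gamma(\xi)=[f]$ directly, and your inline derivation of $\partial\xi=0$ is just Lemma~\ref{alpha_gamma}.
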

\begin{proof}
\emph{Exactness at $\Aut_\bullet(M)$.} By Lemma~\ref{gamma_well_defined}, $\operatorname{im}\gamma\subset\ker\pi$. Let $f\in\ker\pi$, so there is a based homotopy $h\colon M\times I\to P$ from $c$ to $c\circ f$. Then $(M\times I,h)$ is in standard form with $\partial_1=M\times\{0\}$, $N=M\times\{1\}$ and $g=f$, which has degree one, so it defines a class $\xi\in\widehat\Omega^{\B}_{m+1}(P,M)$. Lemma~\ref{lem:lifting}, with $A=N$ and $C=\partial_1$ (note $h\simeq c\circ\mathrm{pr}_1$ is an isomorphism on $\pi_j$ for $j\le n$, so no surgery is needed), gives a lift $r\colon M\times I\to M$ of $h$ with $r|_{M\times1}=f$. Since $c\circ r\simeq h$, the map $r$ is a homotopy in $M$ from $r|_{M\times0}$ to $f$, so $\gamma(\xi)=[r|_{M\times0}]=[f]$.

\emph{Exactness at $\widehat\Omega^{\B}_{m+1}(P,M)$.} If $\xi=j(\kappa(\lambda))$, then $\theta(\kappa(\lambda))$ is the base point by Lemma~\ref{exactB}, so $\rho(\kappa(\lambda))=\id$ and $\gamma(\xi)=\id$ by Lemma~\ref{invcom}. Conversely let $\gamma(\xi)=\id$. By Lemma~\ref{alpha_gamma}, $\partial\xi=\alpha(\id)=0$, so $\xi=j(x)$ for some $x\in\Omega^{\B}_{m+1}(P)$ by Lemma~\ref{bordism exact}, and $\rho(x)=\id$ by Lemma~\ref{invcom}. Thus $\theta(x)\in\widetilde{\mathcal H}(M)$ maps to $\id$ in $\Aut_\bullet(M)$, so $\theta(x)=\iota(z)=\theta(c_*z)$ for some $z\in\Omega^{\B}_{m+1}(M)$ by Lemma~\ref{exactA}. By Lemma~\ref{lem:fibres}, $x-c_*z=\kappa(\lambda)$ for some $\lambda$, hence $\xi=j(x)=j(\kappa(\lambda))+j(c_*z)=j(\kappa(\lambda))$, since $j\circ c_*=0$ by Lemma~\ref{bordism exact}.
\end{proof}

\begin{theorem}[Theorem A]\label{thm:braid_exists}
Let $M$ be a closed, connected, oriented $\B$-manifold of dimension $m\in\{2n-1,2n\}$, $n\ge2$, $m\ge4$, whose fundamental group is finite or infinite cyclic, and assume (T). Then the following four sequences of pointed sets are exact:
\begin{align*}
&\Omega^{\B}_{m+1}(M)\to\Omega^{\B}_{m+1}(P)\to\widehat\Omega^{\B}_{m+1}(P,M)\to\widehat\Omega^{\B}_m(M)\to\Omega^{\B}_m(P),\\
&\Omega^{\B}_{m+1}(M)\to\widetilde{\mathcal H}(M)\to\Aut_\bullet(M)\xrightarrow{\alpha}\widehat\Omega^{\B}_m(M),\\
&\pi_1(\mathcal E_\bullet(P))\to\Omega^{\B}_{m+1}(P)\to\widetilde{\mathcal H}(M)\to\Aut_\bullet(P)\xrightarrow{\beta}\Omega^{\B}_m(P),\\
&\pi_1(\mathcal E_\bullet(P))\to\widehat\Omega^{\B}_{m+1}(P,M)\xrightarrow{\gamma}\Aut_\bullet(M)\xrightarrow{\pi}\Aut_\bullet(P).
\end{align*}
The diagram of Theorem~A commutes, except that the two composites $\Omega^{\B}_{m+1}(P)\to\Aut_\bullet(M)$ agree only up to inversion in $\Aut_\bullet(M)$.
\end{theorem}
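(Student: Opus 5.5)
The four exact sequences are already available from the preceding lemmas, so the proof will mostly be assembly. The first sequence is Lemma~\ref{bordism exact}. The second is Lemma~\ref{exactA}. The third is Lemma~\ref{exactB}, which uses the identification $\widetilde{\mathcal H}(M)\cong\widetilde{\mathcal H}(P)$ of Lemma~\ref{HeM=HeP}. The fourth is Corollary~\ref{im}, with $\pi_1(\mathcal E_\bullet(P))\to\widehat\Omega^{\B}_{m+1}(P,M)$ taken to be $j\circ\kappa$. Throughout, hypothesis (T), the restriction $m\in\{2n-1,2n\}$ and the condition on $\pi_1$ enter only through Lemmas~\ref{lem:surgery}, \ref{lem:lifting} and~\ref{lem:PD_upgrade}, which those lemmas already invoke.

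It remains to check commutativity of every triangle and curved arrow in the braid. I will go through them in order.

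The top left curved arrow $\Omega^{\B}_{m+1}(M)\to\widetilde{\mathcal H}(M)$ equals $\theta\circ c_*$; this is the identity $\theta\circ c_*=\iota$ noted after Lemma~\ref{exactB}. The bottom left curved arrow $\pi_1(\mathcal E_\bullet(P))\to\widehat\Omega^{\B}_{m+1}(P,M)$ is $j\circ\kappa$ by definition.

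The triangle $\widetilde{\mathcal H}(M)\to\Aut_\bullet(M)\to\Aut_\bullet(P)$ has to agree with the map $\widetilde{\mathcal H}(M)\to\Aut_\bullet(P)$ of Lemma~\ref{exactB}. If $(W,F)$ has end $f$, then $c_\sharp(W,F)$ has end $c\circ f=\phi_f\circ c$, so both routes give $\phi_f=\pi(f)$.

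The triangle $\Aut_\bullet(M)\to\widehat\Omega^{\B}_m(M)\to\Omega^{\B}_m(P)$ against $\beta\circ\pi$ follows from
\[
c_*\alpha(f)=[M,cf]-[M,c]=[M,\phi_fc]-[M,c]=\beta(\phi_f),
\]
where the middle equality uses the homotopy $cf\simeq\phi_fc$.

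The bottom right curved arrow $\widehat\Omega^{\B}_{m+1}(P,M)\to\widehat\Omega^{\B}_m(M)$ is $\partial$, and Lemma~\ref{alpha_gamma} gives $\partial=\alpha\circ\gamma$. The top right curved arrow $\widetilde{\mathcal H}(M)\to\Omega^{\B}_m(P)$ is the zero composite, and this is consistent because both routes vanish by exactness.

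The only non-commuting square involves the two maps $\Omega^{\B}_{m+1}(P)\to\Aut_\bullet(M)$. One is $\rho$, the composite through $\widetilde{\mathcal H}(M)$. The other is $\gamma\circ j$, the composite through the relative group. Lemma~\ref{invcom} gives $\gamma(j(x))=\rho(x)^{-1}$, which is exactly the stated exception.

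The main obstacle is bookkeeping rather than new mathematics. Each map has to be identified in the form used in the lemmas, for instance the first map of Lemma~\ref{exactA} versus $\theta\circ c_*$, and $j\circ\kappa$ versus the map in Corollary~\ref{im}. Well-definedness also has to be checked with the conventions on based arcs in cobordisms. Finally, the statement that image elements of $\pi$ preserve $c_*[M]$ was already observed when $\pi$ was defined.
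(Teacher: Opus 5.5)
Your proposal is correct and matches the paper's proof essentially verbatim. The four sequences are Lemmas~\ref{bordism exact}, \ref{exactA}, \ref{exactB} and Corollary~\ref{im}, and the regions of the braid are checked via $\theta\circ c_*=\iota$, via $c\circ f\simeq\phi_f\circ c$, via $c_*\alpha=\beta\circ\pi$, via Lemma~\ref{alpha_gamma}, and via Lemma~\ref{invcom} for the one square that commutes only up to inversion, just as you do. One small misreading: the braid has no curved arrow $\widetilde{\mathcal H}(M)\to\Omega^{\B}_m(P)$, since the top right curved arrow is $\widetilde{\mathcal H}(M)\to\Aut_\bullet(P)$, which you already treated in the triangle through $\Aut_\bullet(M)$, so that remark is superfluous but not wrong.
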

\begin{proof}
The four sequences are Lemma~\ref{bordism exact}, Lemma~\ref{exactA}, Lemma~\ref{exactB} and Corollary~\ref{im}. For the commutativity: the diagram has the following squares. (i) $\Omega^{\B}_{m+1}(M)\to\Omega^{\B}_{m+1}(P)\to\widetilde{\mathcal H}(M)$ equals $\iota$, since $\theta\circ c_*=\iota$. (ii) $\widetilde{\mathcal H}(M)\to\Aut_\bullet(M)\xrightarrow{\pi}\Aut_\bullet(P)$ equals $\widetilde{\mathcal H}(M)\xrightarrow{c_\sharp}\widetilde{\mathcal H}(P)\to\Aut_\bullet(P)$, since $\phi_f\circ c\simeq c\circ f$. (iii) $\Aut_\bullet(M)\xrightarrow{\alpha}\widehat\Omega^{\B}_m(M)\to\Omega^{\B}_m(P)$ equals $\beta\circ\pi$, since $\beta(\phi_f)=[M,\phi_fc]-[M,c]=[M,cf]-[M,c]=c_*\alpha(f)$. (iv) $\alpha\circ\gamma=\partial$ by Lemma~\ref{alpha_gamma}. (v) The two maps $\Omega^{\B}_{m+1}(P)\to\Aut_\bullet(M)$ are inverse to each other by Lemma~\ref{invcom}. The remaining triangles hold by definition.
\end{proof}

\begin{remark}\label{rem:image}
The image of $\pi$ is contained in the stabilizer of $c_*[M]\in H_m(P;\mathbb{Z})$, since $\phi_f\circ c\simeq c\circ f$ and $f_*[M]=[M]$. The braid also describes part of the image: by exactness of the third sequence in Theorem~\ref{thm:braid_exists} and Lemma~\ref{HeM=HeP}, $\ker\beta$ is the image of $\widetilde{\mathcal H}(M)\to\Aut_\bullet(P)$, that is $\ker\beta=\pi(\ker\alpha)$. Here $\ker\beta$ can be a proper subset of $\operatorname{im}\pi$, since $\beta(\pi(f))=c_*\alpha(f)$ need not vanish for $f\notin\ker\alpha$.
\end{remark}
\section{Examples: products and connected sums of spheres}
\label{sec:applications}

All manifolds in this section are products or connected sums of spheres. They are stably parallelizable, hence $\B$-manifolds for every $n$ (we take the $\B$-structure induced by a stable framing), and simply connected except $S^1\times S^{m-1}$, where (T) holds because $c$ is a trivial fibration. Bordism groups of products of spheres are computed from the stable splitting $\Sigma^\infty(A\times B)_+\simeq S^0\vee\Sigma^\infty A\vee\Sigma^\infty B\vee\Sigma^\infty(A\wedge B)$, which splits any generalized homology theory.

\subsection{Pinch-fold twists and maps out of a product of two spheres}
\label{subsec:twists}

The computations for $S^3\times S^3$, for its connected sums, for $S^3\times S^4$ and for $S^4\times S^4$ all use the same family of self-maps. Let $X$ be a closed manifold of dimension $m\ge2$, or more generally a connected CW complex with a single top cell of dimension $m$ that is attached to the rest by a map. Let $\mathrm{pinch}\colon X\to X\vee S^m$ be the map that collapses the boundary of a small disc $D$ contained in the open top cell, and let $q\colon X\to S^m$ be the composite of $\mathrm{pinch}$ with the projection to $S^m$. For $\tilde\kappa\in\pi_m(X)$ the \emph{pinch-fold map} is
\[
t_{\tilde\kappa}:=\nabla\circ(\id\vee\tilde\kappa)\circ\mathrm{pinch}\colon X\to X .
\]
More generally, $\pi_m(Y)$ acts on $[X,Y]_\bullet$ for every space $Y$ by $a\cdot\varphi=\nabla\circ(\varphi\vee a)\circ\mathrm{pinch}$, and $\varphi\circ t_{\tilde\kappa}=(\varphi\circ\tilde\kappa)\cdot\varphi$. If $X$ is simply connected, or if $\pi_1(X)$ acts trivially on $\pi_m(X)$, the class of $t_{\tilde\kappa}$ does not depend on the position of the disc. If the Hurewicz image of $\tilde\kappa$ in $H_m(X;\mathbb{Z})$ vanishes and $X$ is simply connected, then $t_{\tilde\kappa}$ induces the identity on homology, hence is a homotopy equivalence of degree one.

\begin{lemma}\label{lem:coaction}
\begin{enumerate}
\item[(a)] Let $G$ be a topological group, $\varphi\colon X\to G$ and $\psi\colon S^m\to G$ based maps. Then $\nabla\circ(\varphi\vee\psi)\circ\mathrm{pinch}\simeq\varphi\cdot(\psi\circ q)$ (pointwise product in $[X,G]_\bullet$). Consequently $\varphi\circ t_{\tilde\kappa}\simeq\varphi\cdot q^*(\varphi\circ\tilde\kappa)$.
\item[(b)] If $q\circ\tilde\kappa\simeq*$, then $q\circ t_{\tilde\kappa}\simeq q$. Let $K\subset X$ be the complement of the open top cell. If $\tilde\kappa$ and $\tilde\kappa'$ are represented by maps into $K$, then $t_{\tilde\kappa}$ and $t_{\tilde\kappa'}$ restrict to the inclusion on $K$ and $t_{\tilde\kappa}\circ t_{\tilde\kappa'}\simeq t_{\tilde\kappa+\tilde\kappa'}\simeq t_{\tilde\kappa'}\circ t_{\tilde\kappa}$.
\item[(c)] If $h\colon X\to X$ is an orientation-preserving diffeomorphism, then $h\circ t_{\tilde\kappa}\circ h^{-1}\simeq t_{h_*\tilde\kappa}$.
\end{enumerate}
\end{lemma}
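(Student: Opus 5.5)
All three parts are elementary homotopy-theoretic manipulations with the pinch map. In each I will write the relevant composite explicitly as a map built from $\mathrm{pinch}$, $\nabla$ and wedge maps, and compare representatives directly, using only that $\mathrm{pinch}$ is a based co-action and that $D$ lies inside the open top cell.

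For (a), the pointwise product $\varphi\cdot(\psi\circ q)$ is the composite
\[
X\xrightarrow{\Delta}X\times X\xrightarrow{\varphi\times\psi q}G\times G\xrightarrow{\mu}G .
\]
I will write $\Delta$ followed by $\id\times q$ as the composite of $\mathrm{pinch}\colon X\to X\vee S^m$ with the map $X\vee S^m\to X\times S^m$ given by $(\id,*)$ on $X$ and $(x_0',\cdot)$ on $S^m$. Here one uses that $q$ is constant off $D$, and that on $D$ the first coordinate $\id|_D$ is homotopic rel $\partial D$ to the collapsed map sending $D$ to a point. This holds because $D$ is a disc: contract it radially inside the top cell and then slide to the basepoint along a path, which is the standard "pinch is homotopic to the co-H map" argument. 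After this homotopy, $\mu\circ(\varphi\times\psi)$ restricted to the wedge is $\varphi\vee\psi$ up to multiplication by the constant $e$. This gives $\nabla\circ(\varphi\vee\psi)\circ\mathrm{pinch}$. The consequence follows by putting $\psi=\varphi\circ\tilde\kappa$, since $\varphi\circ t_{\tilde\kappa}=\nabla(\varphi\vee\varphi\tilde\kappa)\,\mathrm{pinch}$.

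For (b), I will take $q$ to be the collapse of the complement of $D$, or equivalently its composite with $\mathrm{pinch}$. Then $q\circ t_{\tilde\kappa}=\nabla\circ(q\vee q\tilde\kappa)\circ\mathrm{pinch}$, which is homotopic to $\nabla(q\vee *)\,\mathrm{pinch}=q$. For the composition statement, first observe that $\mathrm{pinch}$ is the identity on $K$, because $K$ misses $D$. So if $\tilde\kappa$ and $\tilde\kappa'$ factor through $K$, each $t$ is the inclusion on $K$. Using two disjoint discs $D$ and $D'$ in the top cell, which does not change the classes (the remark before the lemma, or directly by isotopy of discs in the open cell), I get
\[
t_{\tilde\kappa}\circ t_{\tilde\kappa'}=\nabla\circ(\id\vee\tilde\kappa\vee\tilde\kappa')\circ\mathrm{pinch}_{D,D'} .
\]
This holds because $t_{\tilde\kappa}$ is the identity on $D'$ and the image of $\tilde\kappa'$ lies in $K$. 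Pinching two discs is homotopic to pinching one disc followed by the comultiplication on $S^m$, which produces $\tilde\kappa+\tilde\kappa'$. The argument is symmetric, so the composite in the other order gives the same class.

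For (c), $h\circ t_{\tilde\kappa}\circ h^{-1}=\nabla\circ(\id\vee h\tilde\kappa)\circ\mathrm{pinch}_{h(D)}$, where the $S^m$ summand is identified via $h|_D$. Since $h$ preserves orientation, $h|_{D}\colon D\to h(D)$ is isotopic to the standard orientation-preserving identification. The disc $h(D)$ can be isotoped back to $D$, and so the composite is $t_{h_*\tilde\kappa}$.

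The one delicate step, and the main obstacle, is the identification in (a) of pinch followed by inclusion into the product with $(\id,q)$ up to homotopy. I would handle it with an explicit radial homotopy on $D$.
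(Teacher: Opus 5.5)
Your proposal is correct and follows the paper's proof in all three parts. In (a) you compare the components of $j\circ(\varphi\vee\psi)\circ\mathrm{pinch}$ using $\mathrm{pr}_X\circ\mathrm{pinch}\simeq\id_X$ and then apply $\mu$; in (b) you write $t_{\tilde\kappa}\circ t_{\tilde\kappa'}$ as a two-disc pinch and merge the discs by the comultiplication of $S^m$; and in (c) you use the isotopy uniqueness of orientation-preserving discs, exactly as the paper does. One small correction: the homotopy $\id_X\simeq\mathrm{pr}_X\circ\mathrm{pinch}$ in (a) cannot be rel $\partial D$, since it sends $\partial D$ to a point; the radial homotopy you describe is instead stationary outside a slightly larger disc containing $D$, and that is all the argument needs.
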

\begin{proof}
(a) Let $j\colon G\vee G\to G\times G$ be the inclusion and $\mu$ the multiplication, so $\mu\circ j=\nabla$. The map $j\circ(\varphi\vee\psi)\circ\mathrm{pinch}$ has components $(\varphi\circ\mathrm{pr}_X\circ\mathrm{pinch},\ \psi\circ q)\simeq(\varphi,\psi\circ q)$, because $\mathrm{pr}_X\circ\mathrm{pinch}\simeq\id_X$. Apply $\mu$. Taking $\psi=\varphi\circ\tilde\kappa$ gives the second statement.

(b) The first claim holds since $q\circ t_{\tilde\kappa}=\nabla(q\vee q\tilde\kappa)\circ\mathrm{pinch}\simeq q\circ\mathrm{pr}_X\circ\mathrm{pinch}\simeq q$. For the second, $t_{\tilde\kappa}\circ\tilde\kappa'\simeq\tilde\kappa'$, so $t_{\tilde\kappa}\circ t_{\tilde\kappa'}\simeq\nabla(t_{\tilde\kappa}\vee\tilde\kappa')\circ\mathrm{pinch}$. This is the map that collapses the boundaries of two disjoint discs $D_1,D_2$ in the top cell and folds in $\tilde\kappa'$ on $D_1$ and $\tilde\kappa$ on $D_2$. For $m\ge2$ an isotopy of the open top cell exchanges $D_1$ and $D_2$, so the composite is symmetric in $\tilde\kappa,\tilde\kappa'$. If $D\supset D_1\cup D_2$ is a disc, collapsing the boundaries of $D_1$ and $D_2$ inside $D$ is the comultiplication $S^m\to S^m\vee S^m$ applied to the sphere obtained by collapsing $\partial D$. So the two-disc map is the single-disc map for $\nabla(\tilde\kappa\vee\tilde\kappa')\circ(\text{comultiplication})=\tilde\kappa+\tilde\kappa'$.

(c) Since $h$ is a diffeomorphism, $\mathrm{pinch}_D\circ h^{-1}=(h^{-1}\vee\id)\circ\mathrm{pinch}_{h(D)}$, so $h\,t_{\tilde\kappa}\,h^{-1}=\nabla(\id\vee h\tilde\kappa)\circ\mathrm{pinch}_{h(D)}$. An orientation-preserving embedded disc in a connected manifold is isotopic to any other, so this is homotopic to $t_{h_*\tilde\kappa}$.
\end{proof}

We also need to count the maps $S^p\times S^q\to Y$ with prescribed restriction to $S^p\vee S^q$.

\begin{lemma}\label{lem:orbit}
Let $p,q\ge2$, $M=S^p\times S^q$, $Y$ a space, and $g\in\pi_q(Y)$. Let $\varphi_0=g\circ\mathrm{pr}_2\colon M\to Y$. Then the fibre of the restriction map $[M,Y]_\bullet\to[S^p\vee S^q,Y]_\bullet$ over the class of $\varphi_0|_{S^p\vee S^q}=(*,g)$ is the orbit of $\varphi_0$ under $\pi_{p+q}(Y)$, and the stabilizer of $\varphi_0$ is the subgroup
\[
[\pi_{p+1}(Y),g]:=\{[a,g]\mid a\in\pi_{p+1}(Y)\}\subset\pi_{p+q}(Y).
\]
Hence the fibre is in bijection with $\pi_{p+q}(Y)/[\pi_{p+1}(Y),g]$. The same holds with the factors interchanged: for $g\in\pi_p(Y)$ and $\varphi_0=g\circ\mathrm{pr}_1$ the stabilizer is $[\pi_{q+1}(Y),g]$.
\end{lemma}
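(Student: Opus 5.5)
We use the cell structure $S^p\times S^q=(S^p\vee S^q)\cup_{w}e^{p+q}$, where the attaching map $w\colon S^{p+q-1}\to S^p\vee S^q$ is the universal Whitehead product $[\iota_p,\iota_q]$. The plan has three steps: identify the fibre as a $\pi_{p+q}(Y)$-orbit via the Puppe coaction, reduce the stabilizer to a differential in the Barratt--Puppe sequence, and compute that differential as a Whitehead product.

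\emph{The fibre is an orbit.} The Barratt--Puppe sequence of the cofibration $S^p\vee S^q\to M\xrightarrow{q}S^{p+q}$ is an exact sequence of pointed sets with a group action:
\[
[\Sigma(S^p\vee S^q),Y]_\bullet\xrightarrow{(\Sigma w)^*}\pi_{p+q}(Y)\to[M,Y]_\bullet\to[S^p\vee S^q,Y]_\bullet .
\]
Here $\pi_{p+q}(Y)$ acts on $[M,Y]_\bullet$ through the coaction $M\to M\vee S^{p+q}$, which is exactly the pinch action of Section~\ref{subsec:twists}. The standard fact for cofibrations is that two maps $M\to Y$ have the same restriction to $S^p\vee S^q$ if and only if they lie in the same orbit. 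This gives the first claim.

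\emph{Reduction of the stabilizer.} The stabilizer of an element $\varphi$ is the image of the homomorphism
\[
\pi_1\bigl(\mathrm{Map}_\bullet(S^p\vee S^q,Y),\varphi|\bigr)\to\pi_{p+q}(Y)
\]
given by the connecting map of the fibration $\mathrm{Map}_\bullet(M,Y)\to\mathrm{Map}_\bullet(S^p\vee S^q,Y)$. Its fibre over the restriction is a torsor for $\Omega^{p+q}Y$, so this is a well-defined homomorphism. We have
\[
\pi_1\bigl(\mathrm{Map}_\bullet(S^p\vee S^q,Y)\bigr)=\pi_1\bigl(\Omega^pY\times\Omega^qY\bigr),
\]
taken at the components $(*,g)$. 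Since $\Omega^qY$ is an H-space, every component has $\pi_1\cong\pi_{q+1}(Y)$, and likewise $\pi_1$ of $\Omega^pY$ at the basepoint is $\pi_{p+1}(Y)$. So the input group is $\pi_{p+1}(Y)\oplus\pi_{q+1}(Y)$, and the task is to compute the image of a pair $(a,b)$.

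\emph{Computing the connecting map.} This is the main obstacle. A loop $(a,b)$ of restrictions determines a map $(S^p\vee S^q)\times S^1\to Y$ that restricts to $(*,g)$ on one copy. Extending it over $M\times S^1$ fails, and the obstruction lies on the cell $e^{p+q}\times e^1$. The obstruction is the composite of the attaching map of that cell with the given map. That attaching map is a relative Whitehead product formed from $w$ and $\iota_1$.

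The cleanest way to carry this out is to use the naturality of Whitehead products. The obstruction equals the generalized Whitehead product of the classes $\tilde a\in[\Sigma S^{p},Y]$ and $\tilde b\in[\Sigma S^{q},Y]$ obtained by adjoining the loop, paired with the restriction. In the form used in the Puppe sequence, $(\Sigma w)^*$ applied to the map determined by the pair gives
\[
[a,g]+[\,\cdot\,,b],
\]
where $[\,\cdot\,,b]$ is the product of $b$ with the restriction to $S^p$, which is null. This uses bilinearity of the Whitehead product in the stable range available here, since $p,q\ge2$. The $b$-term vanishes because the $S^p$-restriction is trivial, and the $a$-term is $[a,g]$ up to sign.

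I would verify these sign and vanishing claims on the universal example $Y=S^p\vee S^q\vee S^{p+1}\vee S^{q+1}$. There one can compute directly with the formula $\Sigma w\simeq*$ together with the known cells of $(S^p\times S^q)\times S^1$.

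The stabilizer is therefore $[\pi_{p+1}(Y),g]$, and the fibre is in bijection with $\pi_{p+q}(Y)/[\pi_{p+1}(Y),g]$. The interchanged statement follows by the symmetry of $M$, which swaps the factors.
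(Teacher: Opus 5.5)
Your outline has the same skeleton as the paper's proof. The orbit statement comes from the cofibre (equivalently fibration) sequence of $S^{p+q-1}\xrightarrow{w}S^p\vee S^q\to M$. The stabilizer is the image of $\pi_1(\mathrm{Map}_\bullet(S^p\vee S^q,Y),(*,g))\cong\pi_{p+1}(Y)\oplus\pi_{q+1}(Y)$. The $\pi_{q+1}(Y)$-factor contributes nothing, and $a\in\pi_{p+1}(Y)$ contributes $\pm[a,g]$. But this last identification is the entire content of the stabilizer statement, and you assert it rather than prove it. The mechanism you write down also cannot work. The map $(\Sigma w)^*\colon[\Sigma(S^p\vee S^q),Y]_\bullet\to\pi_{p+q}(Y)$ in the Barratt--Puppe sequence is identically zero, because $\Sigma w$ is the suspension of a Whitehead product and hence null, as you note yourself. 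That map computes the stabilizer of the constant map, not of $\varphi_0$. The stabilizer of $\varphi_0$ is governed by the connecting map at the non-trivial base point $(*,g)$, and the term $[a,g]$ arises precisely from $g$. Bilinearity of Whitehead products does not produce it, and it has nothing to do with a stable range. The universal-example check you defer is exactly the missing computation.

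The paper closes this step by adjunction. Maps $M\to Y$ restricting to $(*,g)$ are based maps $S^p\to F_g=\mathrm{Map}_\bullet(S^q,Y)_g$. The $\pi_{q+1}(Y)$-factor acts through the action of $\pi_1(F_g)$ on $\pi_p(F_g)$, which is trivial since $F_g\simeq\Omega^qY$ is an $H$-group. An element $a\in\pi_{p+1}(Y)$ acts by the boundary map of the evaluation fibration $F_g\to\mathrm{Map}(S^q,Y)_g\to Y$, which G.~W.~Whitehead showed to be $a\mapsto\pm[a,g]$.

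Your obstruction-theoretic route also works once the universal computation is actually done, and it is short. Let $\sigma\colon S^p\times S^1\to S^p\wedge S^1=S^{p+1}$ be the quotient map, $L=(S^p\vee S^q)\times S^1\cup M\times\{*\}$, and $\Phi(x,y,t)=(\sigma(x,t),y)\in S^{p+1}\times S^q$. The data for $(a,0)$ on $L$ is $(a\vee g)\circ\Phi|_L$, since $\Phi$ maps $L$ into $S^{p+1}\vee S^q$. Moreover $\Phi$ carries the remaining cell $e^p\times e^q\times e^1$ of $M\times S^1$ homeomorphically onto the top cell of $S^{p+1}\times S^q$. Hence the difference obstruction on that cell is $\pm(a\vee g)_*[\iota_{p+1},\iota_q]=\pm[a,g]$. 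For $(0,b)$, let $\beta_t$ be the corresponding loop at $g$. The data then extends over $M\times S^1$ by $(x,y,t)\mapsto\beta_t(y)$, so it contributes nothing. Either argument supplies the missing step.
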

\begin{proof}
Write $K=S^p\vee S^q$, so $M=K\cup_we^{p+q}$ with $w=[\iota_p,\iota_q]$, and consider the cofibre sequence $S^{p+q-1}\xrightarrow{w}K\to M\xrightarrow{q}S^{p+q}$. The fibration sequence $\mathrm{Map}_\bullet(S^{p+q},Y)\to\mathrm{Map}_\bullet(M,Y)\to\mathrm{Map}_\bullet(K,Y)\xrightarrow{w^*}\mathrm{Map}_\bullet(S^{p+q-1},Y)$ shows that the fibre of the restriction map over $[(*,g)]$ is the quotient of $\pi_{p+q}(Y)$ by the action of $\pi_1(\mathrm{Map}_\bullet(K,Y),(*,g))$, and that $\pi_{p+q}(Y)$ acts on the fibre through the coaction, by the definition of the action, since $w^*(*,g)=[*,g]=0$.

It remains to identify the stabilizer. Maps $\varphi\colon M\to Y$ with $\varphi|_{K}=(*,g)$ strictly are adjoint to based maps $S^p\to F_g$, where $F_g=\mathrm{Map}_\bullet(S^q,Y)_g$ is the component of $g$; under the translation $F_g\simeq\Omega^qY$ they are elements of $\pi_p(\Omega^qY)=\pi_{p+q}(Y)$, and the element $b$ corresponds to $\varphi_0+b\circ q$. The group $\pi_1(\mathrm{Map}_\bullet(K,Y),(*,g))=\pi_{p+1}(Y)\times\pi_1(F_g,g)$ acts on $\pi_p(F_g,g)$. The factor $\pi_1(F_g,g)=\pi_{q+1}(Y)$ acts trivially, because $F_g\simeq\Omega^qY$ is a loop space and $\pi_1$ acts trivially on the homotopy groups of an $H$-group. The factor $\pi_{p+1}(Y)$ acts by translation by the boundary map of the evaluation fibration $F_g\to\mathrm{Map}(S^q,Y)_g\to Y$, which by G.~W.~Whitehead \cite{whitehead46} is $a\mapsto\pm[a,g]$. Hence the stabilizer is $[\pi_{p+1}(Y),g]$. The statement with the factors interchanged follows by symmetry.
\end{proof}

\subsection{The case $M=S^1\times S^{m-1}$}
\label{subsec:S1xSm}

Let $M=S^1\times S^{m-1}$ with $m\ge4$ and put $n=\lceil m/2\rceil$. Then $n\ge2$ and $m-1\ge n+1$, so $\pi_1(M)=\mathbb{Z}$ and $\pi_i(M)=\pi_i(S^{m-1})=0$ for $2\le i\le n$. Hence $P\simeq S^1$, and $c\colon M\to S^1$ is the projection, which is a trivial fibration, so (T) holds. Theorem~\ref{thm:braid_exists} applies. Since $n\ge2$ the spectrum $M\B$ agrees with the sphere spectrum through dimension $1$, so $\Omega_1=\pi_1^s\cong\mathbb{Z}/2$, generated by the circle with its Lie group framing.

Write $\Omega_k=\Omega^{\B}_k(*)$. The stable splitting $\Sigma^\infty M_+\simeq S^0\vee S^1\vee S^{m-1}\vee S^m$ gives
\[
\Omega^{\B}_k(M)\cong\Omega_k\oplus\Omega_{k-1}\oplus\Omega_{k-m+1}\oplus\Omega_{k-m},\qquad
\Omega^{\B}_k(S^1)\cong\Omega_k\oplus\Omega_{k-1},
\]
and $c_*$ is the projection onto the first two summands. The degree is the projection to $\Omega_0=\mathbb{Z}$ in $\Omega^{\B}_m(M)$. Hence $\widehat\Omega^{\B}_m(M)\cong\Omega_m\oplus\Omega_{m-1}\oplus\Omega_1$ and
\[
\ker\bigl(c_*\colon\widehat\Omega^{\B}_m(M)\to\Omega^{\B}_m(S^1)\bigr)\cong\Omega_1\cong\mathbb{Z}/2 .
\]
Since $c$ has a section, $\Omega^{\B}_{m+1}(M)\to\Omega^{\B}_{m+1}(S^1)$ is onto, so by Lemma~\ref{bordism exact}
\[
\partial\colon\widehat\Omega^{\B}_{m+1}(S^1,M)\xrightarrow{\ \cong\ }\ker\bigl(\widehat\Omega^{\B}_m(M)\to\Omega^{\B}_m(S^1)\bigr)\cong\mathbb{Z}/2 .
\]

\begin{theorem}\label{splitting}
For $M=S^1\times S^{m-1}$ with $m\ge4$ we have $\Aut_\bullet(M)\cong\mathbb{Z}/2\oplus\mathbb{Z}/2$.
\end{theorem}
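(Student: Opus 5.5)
We will compute $\Aut_\bullet(M)$ through the exact sequence of Corollary~\ref{im},
\[
\pi_1(\mathcal E_\bullet(S^1))\to\widehat\Omega^{\B}_{m+1}(S^1,M)\xrightarrow{\gamma}\Aut_\bullet(M)\xrightarrow{\pi}\Aut_\bullet(S^1),
\]
and then check the result against explicit self-maps. The plan has three steps: bound the kernel of $\pi$ using the computation $\widehat\Omega^{\B}_{m+1}(S^1,M)\cong\mathbb{Z}/2$ made just before the statement; identify the image of $\pi$; and show that the resulting extension is $\mathbb{Z}/2\oplus\mathbb{Z}/2$.

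\emph{Image.} Here $\Aut_\bullet(S^1)=\{\pm1\}$. By Remark~\ref{rem:image}, every element of the image preserves $c_*[M]\in H_m(S^1)=0$, so that remark gives no restriction. The element $-1$ is realized by the orientation-preserving diffeomorphism $\rho\times r$, where $\rho(z)=\bar z$ on $S^1$ and $r$ is a reflection of $S^{m-1}$ fixing the basepoint. Hence $\pi$ is onto $\mathbb{Z}/2$, and $\rho\times r$ is an involution splitting $\pi$.

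\emph{Kernel.} Since $\widehat\Omega^{\B}_{m+1}(S^1,M)\cong\mathbb{Z}/2$, the map $\gamma$ has at most two values, so $|\ker\pi|\le2$. To see that the kernel is nontrivial, we use Lemma~\ref{alpha_gamma}: $\alpha\circ\gamma=\partial$, and $\partial$ is an isomorphism onto $\ker(c_*)\cong\Omega_1$. Therefore $\gamma$ of the generator has $\alpha\neq0$, so it is not the identity, and $\ker\pi\cong\mathbb{Z}/2$.

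For a concrete representative, take the twist $\tau(z,x)=(z,\eta(z)\cdot x)$, where $\eta\colon S^1\to SO(m)$ is the nontrivial loop. This map lies over $\mathrm{id}_{S^1}$, and we expect its class $\alpha(\tau)$ to be the framing-twisted circle summand $\Omega_1$. Identifying $\gamma$ of the generator with $\tau$ is not needed for the count, but it makes the group structure transparent.

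The group then has order $4$ and contains the two involutions $\tau$ and $\rho\times r$, the latter not lying in the kernel. To conclude that the group is $(\mathbb{Z}/2)^2$ rather than $\mathbb{Z}/4$, it suffices to know that the kernel is central or that the splitting involution exists. A group of order $4$ with a subgroup $\mathbb{Z}/2$ and an involution outside that subgroup is $(\mathbb{Z}/2)^2$, since $\mathbb{Z}/4$ has only one involution. So the splitting $\rho\times r$ alone finishes the argument.

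\textbf{Main obstacle.} The delicate point is the claim that $\ker\pi$ has exactly two elements. This requires $\partial$ to be an isomorphism, which was established above. It also requires that nonzero $\alpha$ detects non-identity, which holds trivially because $\alpha(\mathrm{id})=0$. The other subtle check is that $\rho\times r$ is orientation-preserving and based. It is orientation-preserving because it is the product of two orientation-reversing maps, and it is based once the reflection $r$ fixes the basepoint of $S^{m-1}$.

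For the remark about all self-equivalences, we would add the orientation-reversing map $\mathrm{id}\times r$. It commutes with the other generators and has order $2$, which gives $(\mathbb{Z}/2)^3$.
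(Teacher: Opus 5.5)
Your proposal is correct and follows essentially the same route as the paper. You get $|\ker\pi|\le 2$ because $\gamma$ maps the two-element group $\widehat\Omega^{\B}_{m+1}(S^1,M)$ onto $\ker\pi$, nontriviality of the kernel from $\alpha\circ\gamma=\partial$ with $\partial$ injective, surjectivity of $\pi$ from the involution $(z,x)\mapsto(\bar z,r(x))$, and finally a group-of-order-$4$ argument. The only differences are minor: the paper notes that $\tau$ is central and checks $(s\tau)^2=1$, whereas you use that $\mathbb{Z}/4$ has a unique involution, and your explicit rotation twist is, as you say, not needed for the count.
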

\begin{proof}
By Corollary~\ref{im} there is an exact sequence $\widehat\Omega^{\B}_{m+1}(S^1,M)\xrightarrow{\gamma}\Aut_\bullet(M)\xrightarrow{\pi}\Aut_\bullet(S^1)$, where $\Aut_\bullet(S^1)\cong\mathbb{Z}/2$ is generated by complex conjugation. So $\ker\pi=\operatorname{im}\gamma$ has at most two elements. By Lemma~\ref{alpha_gamma}, $\alpha(\gamma(w))=\partial w$. If $w\ne0$ then $\partial w\ne0$, whereas $\alpha(\id)=0$, so $\gamma(w)\ne\id$. Thus $\ker\pi$ has order $2$; let $\tau$ be its non-trivial element. A normal subgroup of order $2$ is central, so $\tau$ is central.

Let $r\colon S^{m-1}\to S^{m-1}$ be a reflection and $s(x,y)=(\bar x,r(y))$. It is an orientation-preserving based self-diffeomorphism of order $2$ with $\pi(s)\ne1$, so $\pi$ is onto and $|\Aut_\bullet(M)|=4$. Since $\tau$ is central, $(s\tau)^2=s^2\tau^2=1$, so every non-trivial element has order $2$, and $\Aut_\bullet(M)\cong(\mathbb{Z}/2)^2$.
\end{proof}

\begin{remark}\label{rem:generators}
Let $\Aut(M)$ be the group of all based self-equivalences, including those of degree $-1$. It contains the commuting involutions $\psi(x,y)=(\bar x,y)$ and $\rho(x,y)=(x,r(y))$, both of degree $-1$, and $\tau$. The subgroup $\Aut_\bullet(M)$ has index $2$, so $|\Aut(M)|=8$. The kernel of $\Aut(M)\to\Aut(S^1)$ is normal and contains $\rho$ and $\tau$; it has order $4$. As $\tau$ generates the unique subgroup of order $2$ of the normal subgroup $\ker\pi$ of $\Aut_\bullet(M)$, which is itself normal in $\Aut(M)$ (it is $\ker\pi\cap\Aut_\bullet(M)$), $\tau$ is central in $\Aut(M)$. Hence $\psi,\rho,\tau$ commute and $\Aut(M)\cong(\mathbb{Z}/2)^3$. This agrees with Sieradski's computation for $S^1\times S^3$ and $S^1\times S^7$ \cite{sieradski}.

The element $\tau$ is a pinch-fold map. Let $\eta^{m-3}\colon S^m\to S^{m-1}$ be the $(m-3)$-fold suspension of the Hopf map $\eta\colon S^3\to S^2$, let $j\colon S^{m-1}\to M$ be the inclusion of the fibre, and $\tilde\kappa=j\circ\eta^{m-3}\in\pi_m(M)$. The fundamental group acts trivially on $\pi_m(M)=\pi_m(S^{m-1})$, and the Hurewicz image of $\tilde\kappa$ is zero, so $t_{\tilde\kappa}$ has degree one and induces the identity on $\pi_1$. By Lemma~\ref{lem:PD_upgrade} it is a homotopy equivalence, with $\pi(t_{\tilde\kappa})=1$ and $\alpha(t_{\tilde\kappa})=[S^m,j\eta^{m-3}]$. Under the splitting above this class is the image of the generator of $\pi_m(S^{m-1})\cong\pi_1^s$ in the summand $\Omega_1=\mathbb{Z}/2$, so it is non-zero. Hence $t_{\tilde\kappa}=\tau$.
\end{remark}
\subsection{The case $M = S^3 \times S^3$}
\label{subsec:S3xS3}

Let $M=S^3\times S^3$. This is a parallelizable $6$-manifold, hence spin, that is a $BO\langle4\rangle$-manifold with $n=3$ and $m=2n$. Since $\pi_1(M)=\pi_2(M)=0$ and $\pi_3(M)\cong\mathbb{Z}^2$, its Postnikov $3$-type is $P=K(\mathbb{Z}^2,3)$ and $\Aut_\bullet(P)=GL_2(\mathbb{Z})$. Hypothesis (T) holds because $M$ is simply connected, so Theorem~\ref{thm:braid_exists} applies with $\B=BSpin$. We first compute what the braid gives, namely a bound for the relative bordism group, and then determine $\Aut_\bullet(M)$ by an elementary argument that identifies the kernel of $\pi$ and the action of the quotient on it.

\begin{lemma}\label{lem:homology_K(ZZ,3)}
The integral homology of $P = K(\mathbb{Z}^2, 3)$ in degrees $\le 8$ is
\[
  H_i(P) \cong \begin{cases}
  \mathbb{Z} & i = 0,\\
  0 & i = 1, 2, 4,\\
  \mathbb{Z}^2 & i = 3, \\
  (\mathbb{Z}/2)^2 & i = 5, \\
  \mathbb{Z} & i = 6, \\
  (\mathbb{Z}/3)^2 & i = 7, \\
  (\mathbb{Z}/2)^4 & i = 8.
  \end{cases}
\]
In terms of the two fundamental classes $\iota_1,\iota_2\in H^3(P;\mathbb{Z}/2)$, the groups $H^k(P;\mathbb{Z}/2)$ for $k=3,5,6,8$ have bases $\{\iota_a\}$, $\{\mathrm{Sq}^2\iota_a\}$, $\{\iota_1^2,\iota_1\iota_2,\iota_2^2\}$ and $\{\iota_a\,\mathrm{Sq}^2\iota_b\}$ respectively, and $H^4(P;\mathbb{Z}/2)=H^7(P;\mathbb{Z}/2)=0$.
\end{lemma}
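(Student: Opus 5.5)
The plan is to compute $H_*(K(\mathbb{Z},3))$ through degree $8$ and then pass to $P=K(\mathbb{Z},3)\times K(\mathbb{Z},3)$ with the Künneth formula; the mod $2$ statements will come from Serre's computation of $H^*(K(\mathbb{Z},3);\mathbb{Z}/2)$.

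\emph{Step 1: mod $2$ and mod $3$ cohomology of $K(\mathbb{Z},3)$.} By Serre, $H^*(K(\mathbb{Z},3);\mathbb{Z}/2)$ is the polynomial algebra on the admissible Steenrod operations of excess $<3$ applied to $\iota$, with $\mathrm{Sq}^1$ excluded at the end. The generators are $\iota$, $\mathrm{Sq}^2\iota$ and $\mathrm{Sq}^4\mathrm{Sq}^2\iota$, in degrees $3,5,9$. Through degree $8$ this gives a basis
\[
1,\ \iota,\ \mathrm{Sq}^2\iota,\ \iota^2,\ \iota\,\mathrm{Sq}^2\iota
\]
in degrees $0,3,5,6,8$. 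At the prime $3$ the generators are $\iota$ and $P^1\iota$ (degree $7$) together with $\beta P^1\iota$ (degree $8$). Since $\iota^2=0$ at an odd prime, the classes through degree $8$ are $1,\iota,P^1\iota,\beta P^1\iota$. Primes $\ge5$ contribute nothing through degree $8$, because the first $p$-torsion sits in degree $2p+2\ge 12$.

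\emph{Step 2: integral homology of $K(\mathbb{Z},3)$.} The rational homology is $\Lambda[\iota]$, so the only free parts are $\mathbb{Z}$ in degrees $0$ and $3$. For the $2$-torsion, use the Bockstein action: $\mathrm{Sq}^1\mathrm{Sq}^2\iota=\mathrm{Sq}^3\iota=\iota^2$, so $\mathrm{Sq}^2\iota$ and $\iota^2$ are linked by a first-order Bockstein. Hence $H^6(K(\mathbb{Z},3);\mathbb{Z})$ has a $\mathbb{Z}/2$, and dually $H_5$ has a $\mathbb{Z}/2$. The class $\iota\,\mathrm{Sq}^2\iota$ in degree $8$ has $\mathrm{Sq}^1$ equal to $\iota^3$, which lives in degree $9$, so it pairs with degree $9$ and gives $2$-torsion in $H_8$. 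At $3$, $\beta P^1\iota\ne0$, which gives $\mathbb{Z}/3$ in $H_7$.

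Altogether $H_i(K(\mathbb{Z},3))$ for $i=0,\dots,8$ is
\[
\mathbb{Z},\ 0,\ 0,\ \mathbb{Z},\ 0,\ \mathbb{Z}/2,\ 0,\ \mathbb{Z}/3,\ \mathbb{Z}/2 .
\]
The main obstacle is checking the degree-$8$ entry, in particular that $H_8$ is exactly $\mathbb{Z}/2$ and carries no further extension. I would settle it by comparing with the known table of the integral homology of $K(\mathbb{Z},3)$ (Cartan/Breen) or by a direct universal-coefficient count with both $\mathbb{Z}/2$ and $\mathbb{Z}/4$ coefficients. Since $P$ is a $K(\mathbb{Z},3)$-space, I would take the single-factor values from the literature rather than rederive them.

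\emph{Step 3: Künneth.} For a product of two copies $A$ of $K(\mathbb{Z},3)$,
\[
H_k(A\times A)=\bigoplus_{i+j=k}H_i\otimes H_j\oplus\bigoplus_{i+j=k-1}\mathrm{Tor}(H_i,H_j).
\]
Through degree $8$ the contributions are as follows.
\begin{itemize}
\item Degree $3$: $\mathbb{Z}^2$.
\item Degree $5$: $(\mathbb{Z}/2)^2$.
\item Degree $6$: $H_3\otimes H_3=\mathbb{Z}$.
\item Degree $7$: $(\mathbb{Z}/3)^2$, since $H_4=0$.
\item Degree $8$: the two copies of $H_8$ together with $H_3\otimes H_5$ and $H_5\otimes H_3$, giving $(\mathbb{Z}/2)^4$.
\end{itemize}
There is no Tor term through degree $8$: the first nonzero one would be $\mathrm{Tor}(H_5,H_5)$, which lies in degree $11$. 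Degrees $1,2,4$ are zero.

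The mod $2$ cohomology bases follow from the mod $2$ Künneth theorem applied to Step 1. In degree $6$ the basis is $\iota_1^2,\iota_1\iota_2,\iota_2^2$. In degree $8$ it is $\iota_a\,\mathrm{Sq}^2\iota_b$ for $a,b\in\{1,2\}$, which are four classes. Degrees $4$ and $7$ are empty because no monomial has those degrees.
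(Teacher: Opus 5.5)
Your proposal is correct and takes the paper's route. Both take the classical values of $H_*(K(\mathbb{Z},3))$ through degree $8$, apply the Künneth formula, and read off the mod $2$ bases from Serre's theorem plus Künneth; your remark that the first nonzero Tor term is $\mathrm{Tor}(H_5,H_5)$ in degree $11$ is a more precise reason than the paper's for the absence of Tor terms, and the degree-$8$ worry you flag is already settled by your own Bockstein computation. Indeed, $\mathrm{Sq}^2\iota\mapsto\iota^2$, $\iota\,\mathrm{Sq}^2\iota\mapsto\iota^3$ and $\mathrm{Sq}^4\mathrm{Sq}^2\iota\mapsto(\mathrm{Sq}^2\iota)^2$ leave the mod $2$ Bockstein spectral sequence with $E_2$ only in degrees $0$ and $3$ through degree $9$, so all $2$-torsion there has order exactly $2$ and $H_8=\mathbb{Z}/2$.
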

\begin{proof}
The integral homology of $K(\mathbb{Z},3)$ in degrees $0,\dots,8$ is $\mathbb{Z},0,0,\mathbb{Z},0,\mathbb{Z}/2,0,\mathbb{Z}/3,\mathbb{Z}/2$; this is the classical computation from the Serre spectral sequence of the path fibration $K(\mathbb{Z},2)\to *\to K(\mathbb{Z},3)$ (see \cite[Chapter~6]{moshertangora} or \cite[Section~4.L]{hatcher}). The stated groups follow from the K\"unneth formula for $P=K(\mathbb{Z},3)\times K(\mathbb{Z},3)$, noting that $H_3$ is free so the Tor terms vanish in this range. The mod $2$ cohomology of $K(\mathbb{Z},3)$ is a polynomial algebra on $\mathrm{Sq}^I\iota_3$ for admissible $I$ of excess less than $3$ not ending in $1$, which in degrees $\le 8$ gives the classes $\iota_3$, $\mathrm{Sq}^2\iota_3$, and their products. The statement for $P$ follows from the K\"unneth formula.
\end{proof}

\begin{theorem}\label{thm:bordism_S3S3}
Let $M = S^3 \times S^3$ and $P = K(\mathbb{Z}^2, 3)$. Then
\begin{itemize}
\item[(i)] $\widehat{\Omega}^{\mathrm{spin}}_6(M) = 0$, and
\item[(ii)] $\widehat{\Omega}^{\mathrm{spin}}_7(P,M)$ is a finite abelian group of order at most $144=2^4\cdot 3^2$.
\end{itemize}
\end{theorem}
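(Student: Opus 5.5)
For (i) we use the stable splitting $\Sigma^\infty M_+\simeq S^0\vee S^3\vee S^3\vee S^6$:
\[
\Omega^{spin}_6(M)\cong\Omega^{spin}_6\oplus\Omega^{spin}_3\oplus\Omega^{spin}_3\oplus\Omega^{spin}_0 .
\]
The degree is the projection onto the last summand $\Omega^{spin}_0=\mathbb{Z}$. So $\widehat\Omega^{spin}_6(M)\cong\Omega^{spin}_6\oplus(\Omega^{spin}_3)^2$, and this vanishes because $\Omega^{spin}_3=\Omega^{spin}_6=0$.

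For (ii), Lemma~\ref{bordism exact} together with (i) shows that $\widehat\Omega^{spin}_7(P,M)$ is the cokernel of $c_*\colon\Omega^{spin}_7(M)\to\Omega^{spin}_7(P)$. It therefore suffices to show $|\Omega^{spin}_7(P)|\le 144$, since then the cokernel is finite of order at most $144$. We compute $\Omega^{spin}_7(P)$ through the Atiyah--Hirzebruch spectral sequence
\[
E^2_{p,q}=H_p(P;\Omega^{spin}_q)\Rightarrow\Omega^{spin}_{p+q}(P).
\]
The coefficients are $\Omega^{spin}_q=\mathbb{Z},\mathbb{Z}/2,\mathbb{Z}/2,0,\mathbb{Z},0,0,0$ for $q=0,\dots,7$. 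On the line $p+q=7$ this leaves four terms:
\begin{itemize}
\item $H_7(P;\mathbb{Z})=(\mathbb{Z}/3)^2$;
\item $H_6(P;\mathbb{Z}/2)$;
\item $H_5(P;\mathbb{Z}/2)$;
\item $H_3(P;\mathbb{Z})=\mathbb{Z}^2$ in position $(3,4)$.
\end{itemize}
Lemma~\ref{lem:homology_K(ZZ,3)} and universal coefficients give $H_6(P;\mathbb{Z}/2)\cong(\mathbb{Z}/2)^3$ and $H_5(P;\mathbb{Z}/2)\cong(\mathbb{Z}/2)^2$. The $(3,4)$ term is reduced to a finite subquotient by the rational Hurewicz map. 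Any spin $7$-manifold over $P$ whose image in $H_3$ is a class $x$ has the $E_\infty$-component in position $(3,4)$ computed by the signature or $\hat A$-type invariant of the dual $4$-dimensional submanifold. Since $\Omega_7^{spin}(P)$ is a torsion group (rationally it is $H_*(P;\mathbb{Q})\otimes\Omega^{spin}_*\otimes\mathbb{Q}$, which vanishes in degree $7$ because $H_3(P;\mathbb{Q})\otimes\Omega_4\otimes\mathbb{Q}$ is killed by $d_4$ from $H_7\otimes\Omega_0$? no, that is torsion), the $\mathbb{Z}^2$ must be hit by a rational differential.

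This last step is the delicate one. What I would really do is argue that $d^4\colon E^4_{7,0}$ cannot kill a free group, and instead observe that the rational bordism of $P$ is $H_*(P;\mathbb{Q})\otimes\Omega_*^{spin}\otimes\mathbb{Q}$, which in degree $7$ contains $H_3(P;\mathbb{Q})\otimes\Omega_4\otimes\mathbb{Q}=\mathbb{Q}^2$. So $\Omega_7^{spin}(P)$ is not finite. The correct approach is to keep the free part inside the image of $c_*$. $\Omega^{spin}_7(M)\supset(\Omega^{spin}_4)^2$ comes from $S^3\times K3$-type classes, and it maps onto the $E^\infty_{3,4}$ part up to finite index. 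Using naturality of the spectral sequence for $c$, which is an isomorphism on $H_3$, the $(3,4)$ term lies entirely in the image of $c_*$.

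With that in place, the cokernel is bounded by the remaining terms:
\begin{itemize}
\item the $3$-part is at most $|H_7(P)|=9$;
\item the $2$-part is at most $|E^\infty_{6,1}|\cdot|E^\infty_{5,2}|$.
\end{itemize}
The main obstacle is the $2$-primary part. We need differentials to cut $(\mathbb{Z}/2)^3\oplus(\mathbb{Z}/2)^2$, of order $32$, down to order at most $16$. For this I would use the known $d_2$ differentials: $d_2\colon E_{p,0}\to E_{p-2,1}$ is reduction mod $2$ composed with the dual of $\mathrm{Sq}^2$, and $d_2\colon E_{p,1}\to E_{p-2,2}$ is the dual of $\mathrm{Sq}^2$. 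By Lemma~\ref{lem:homology_K(ZZ,3)}, $\mathrm{Sq}^2\colon H^3\to H^5$ and $\mathrm{Sq}^2\colon H^6\to H^8$ are computed with the Cartan formula: $\mathrm{Sq}^2(\iota_a\iota_b)=\iota_a\mathrm{Sq}^2\iota_b+\mathrm{Sq}^2\iota_a\,\iota_b$, which is injective on $H^6$. This makes $d_2\colon E_{8,1}\to E_{6,2}$ hit the $\Omega_2$-row, and $E_{8,0}=H_8(P)\to E_{6,1}$ surjective onto $(\mathbb{Z}/2)^3$, thereby killing the $(6,1)$ term. Also $d_2\colon E^2_{7,1}=H_7(P;\mathbb{Z}/2)=0$, so on $E_{5,2}$ we keep at most $(\mathbb{Z}/2)^2$. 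One further $\mathbb{Z}/2$ may survive from $E_{6,1}$ if the differential is only rank $2$, giving at most $2^4$.

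Combining the two primes gives $|\widehat\Omega^{spin}_7(P,M)|\le 2^4\cdot 3^2=144$.
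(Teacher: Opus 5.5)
Part (i) is correct and is the paper's argument. In (ii), your opening reduction ``it suffices to show $|\Omega^{spin}_7(P)|\le 144$'' is false, as you notice yourself. Indeed $\Omega^{spin}_7(P)\otimes\mathbb{Q}\cong H_3(P;\mathbb{Q})\otimes\Omega^{spin}_4\otimes\mathbb{Q}\cong\mathbb{Q}^2$, so that sentence and the digression about $d^4$ and signatures should go. The naturality argument you put in its place does work, and it is equivalent to the paper's cellular argument ($P^{(3)}=S^3\vee S^3\subset M$). State it precisely, though. In total degree $7$ one has $F_2=0$ for both $M$ and $P$, and no differential leaves $E_{3,4}$. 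So $E^\infty_{3,4}(P)$ is a quotient of $E^2_{3,4}(P)\cong E^2_{3,4}(M)=E^\infty_{3,4}(M)$. Since also $E^2_{4,3}=0$, you get $F_4\Omega^{spin}_7(P)=F_3\Omega^{spin}_7(P)\subset\operatorname{im}c_*$, and the cokernel is a quotient of $\Omega^{spin}_7(P)/F_3$.

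The genuine error is in the $2$-primary step: $\mathrm{Sq}^2$ is not injective on $H^6(P;\mathbb{Z}/2)$. Since $\mathrm{Sq}^1\iota_a=0$, the full Cartan formula gives $\mathrm{Sq}^2(\iota_a^2)=2\,\iota_a\mathrm{Sq}^2\iota_a+(\mathrm{Sq}^1\iota_a)^2=0$; your own formula with $a=b$ already gives $0$. Only $\mathrm{Sq}^2(\iota_1\iota_2)$ is non-zero, so the rank is exactly one. Thus $d_2\colon E^2_{8,0}\to E^2_{6,1}$ is not onto, and it does not kill the $(6,1)$ term. Had it done so, you would get the bound $4\cdot 9=36$, which is false: by Corollary~\ref{im} and Theorem~\ref{thm:S3xS3_split}, $\gamma$ maps this group onto $\ker\pi\cong(\mathbb{Z}/12)^2$, which has order $144$.

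Your fallback is also miscounted. A rank-two differential would leave $|E^3_{6,1}|=2$ and a $2$-part of order at most $8$, not $2^4$. What you need, and what is true, is rank at least one. That gives $|E^3_{6,1}|\le 4$ and hence $|E^\infty_{6,1}|\cdot|E^\infty_{5,2}|\le 16$.

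To deduce this from $\mathrm{Sq}^2(\iota_1\iota_2)\ne 0$ you must also check a point you skip. On the bottom row, $d_2$ is reduction mod $2$ followed by the dual of $\mathrm{Sq}^2$, so you need reduction $H_8(P;\mathbb{Z})\to H_8(P;\mathbb{Z}/2)$ to be onto. This holds because $H_7(P;\mathbb{Z})\cong(\mathbb{Z}/3)^2$ has no $2$-torsion. With these corrections your argument becomes the paper's.
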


\begin{proof}
We use the spin bordism groups of a point in low degrees \cite{abp}, namely $\Omega^{\mathrm{spin}}_q\cong\mathbb{Z},\ \mathbb{Z}/2,\ \mathbb{Z}/2,\ 0,\ \mathbb{Z},\ 0,\ 0,\ 0$ for $q=0,\dots,7$, and the two lowest differentials of the AHSS. By \cite{teichner}, for a space $X$ the differential $d_2\colon E^2_{p,0}=H_p(X;\mathbb{Z})\to E^2_{p-2,1}=H_{p-2}(X;\mathbb{Z}/2)$ is the composite of reduction mod $2$ with the dual of $\mathrm{Sq}^2$, and $d_2\colon E^2_{p,1}\to E^2_{p-2,2}$ is the dual of $\mathrm{Sq}^2$.

\smallskip
\noindent\textbf{Part (i).} The stable splitting $\Sigma^\infty M_+\simeq S^0\vee S^3\vee S^3\vee S^6$ gives $\Omega^{\mathrm{spin}}_k(M)\cong\Omega^{\mathrm{spin}}_k\oplus(\Omega^{\mathrm{spin}}_{k-3})^2\oplus\Omega^{\mathrm{spin}}_{k-6}$. For $k=6$ this is $0\oplus0\oplus\mathbb{Z}\cong\mathbb{Z}$, and the degree map is an isomorphism onto $\mathbb{Z}$. Hence $\widehat{\Omega}^{\mathrm{spin}}_6(M)=\ker(\deg)=0$. For $k=7$ the splitting gives $\Omega^{\mathrm{spin}}_7(M)\cong\mathbb{Z}^2\oplus\mathbb{Z}/2$, where $\mathbb{Z}^2=\Omega_4^{\mathrm{spin}}\otimes H_3(M)$ is the bottom filtration piece of the AHSS.

\smallskip
\noindent\textbf{Part (ii).} By Lemma~\ref{bordism exact} and Part~(i), $\partial=0$ and $\widehat{\Omega}^{\mathrm{spin}}_7(P,M)\cong\operatorname{coker}\bigl[c_*\colon\Omega^{\mathrm{spin}}_7(M)\to\Omega^{\mathrm{spin}}_7(P)\bigr]$. We use the AHSS for $P$ in total degree $7$. Let $F_p\Omega^{\mathrm{spin}}_7(P)$ denote the image of $\Omega^{\mathrm{spin}}_7(P^{(p)})\to\Omega^{\mathrm{spin}}_7(P)$, where $P^{(p)}$ is the $p$-skeleton, so that $F_p\Omega^{\mathrm{spin}}_7(P)/F_{p-1}\Omega^{\mathrm{spin}}_7(P)\cong E^\infty_{p,7-p}$. By Lemma~\ref{lem:homology_K(ZZ,3)} and universal coefficients, $H_6(P;\mathbb{Z}/2)\cong(\mathbb{Z}/2)^3$, $H_5(P;\mathbb{Z}/2)\cong(\mathbb{Z}/2)^2$, $H_4(P;\mathbb{Z}/2)=0$ and $H_7(P;\mathbb{Z}/2)=0$. The terms of total degree $7$ are
\[
E^2_{7,0}\cong(\mathbb{Z}/3)^2,\quad E^2_{6,1}\cong(\mathbb{Z}/2)^3,\quad E^2_{5,2}\cong(\mathbb{Z}/2)^2,\quad E^2_{4,3}=0,\quad E^2_{3,4}\cong\mathbb{Z}^2,
\]
and $E^2_{p,q}=0$ for $p\le2$ in this total degree.

The differential $d_2\colon E^2_{8,0}\to E^2_{6,1}$ has rank one. Indeed, reduction mod $2$ is an isomorphism $H_8(P;\mathbb{Z})\cong(\mathbb{Z}/2)^4\to H_8(P;\mathbb{Z}/2)$, because $H_7(P;\mathbb{Z})$ has no $2$-torsion, and $d_2$ is then dual to $\mathrm{Sq}^2\colon H^6(P;\mathbb{Z}/2)\to H^8(P;\mathbb{Z}/2)$. Since $\mathrm{Sq}^1\iota_a=0$, the Cartan formula gives $\mathrm{Sq}^2(\iota_a^2)=(\mathrm{Sq}^1\iota_a)^2=0$ and
\[
\mathrm{Sq}^2(\iota_1\iota_2)=\mathrm{Sq}^2\iota_1\cdot\iota_2+\iota_1\cdot\mathrm{Sq}^2\iota_2\ne0,
\]
so $\mathrm{Sq}^2$ has rank one. Hence $E^3_{6,1}$ has order $4$. The term $E^2_{6,1}$ supports no non-zero differential, since all possible targets $E^2_{4,2}$, $E^2_{3,3}$, $E^2_{2,4}$, $E^2_{1,5}$, $E^2_{0,6}$ vanish. Also $E^2_{5,2}$ has order $4$ and $E^2_{7,0}$ has order $9$. Since $E^2_{4,3}=0$, we have $F_4\Omega^{\mathrm{spin}}_7(P)=F_3\Omega^{\mathrm{spin}}_7(P)$, and the successive quotients of $\Omega^{\mathrm{spin}}_7(P)/F_3\Omega^{\mathrm{spin}}_7(P)$ are $E^\infty_{5,2}$, $E^\infty_{6,1}$ and $E^\infty_{7,0}$. Hence this quotient has order at most $4\cdot4\cdot9=144$.

Choose the cell structure of $P$ so that $P=M\cup(\text{cells of dimension}\ge5)$, hence $P^{(3)}=M^{(3)}=S^3\vee S^3\subset M$. Then $F_3\Omega^{\mathrm{spin}}_7(P)$ is the image of $\Omega^{\mathrm{spin}}_7(S^3\vee S^3)\to\Omega^{\mathrm{spin}}_7(P)$, which factors through $c_*$, so $F_3\Omega^{\mathrm{spin}}_7(P)$ lies in the image of $c_*$. Therefore the cokernel of $c_*$ is a quotient of $\Omega^{\mathrm{spin}}_7(P)/F_3\Omega^{\mathrm{spin}}_7(P)$, and has order at most $144$.
\end{proof}

\begin{remark}
A first reading of the AHSS suggests that $E^2_{6,1}$ contributes $(\mathbb{Z}/2)^3$, which would give a bound of $288$. The differential from $E^2_{8,0}$ kills the reduction mod $2$ of the class $c_*[M]$.
\end{remark}

The intersection form $s_M$ on $\pi_3(M) \cong \mathbb{Z} \oplus \mathbb{Z}$ is skew-symmetric, not symmetric: for $u, v \in H^3(M;\mathbb{Z})$, graded-commutativity of cup product gives $u \smile v = (-1)^{3\cdot 3} v \smile u = -v \smile u$, since $3 \cdot 3 = 9$ is odd.  In the basis $e_1, e_2 \in H_3(M;\mathbb{Z})$ dual to the two $S^3$ factors, with $s_M(e_1,e_2)=1$, this forces $s_M(e_2,e_1)=-1$ and $s_M(e_i,e_i)=0$, i.e.
\[
  s_M = \begin{pmatrix} 0 & 1 \\ -1 & 0 \end{pmatrix}.
\]
For a $2\times 2$ integer matrix $A$, $A^T s_M A = \det(A) \cdot s_M$ always, so
\[
  \Isom[\pi_3(M), s_M] \cong Sp_2(\mathbb{Z}) = SL_2(\mathbb{Z}),
\]
the subgroup of $GL_2(\mathbb{Z})$ of matrices of determinant $1$. This is consistent with $\Aut_\bullet(M)$ consisting of orientation-preserving equivalences: an automorphism of $\pi_3(M)$ of determinant $-1$ sends $s_M$ to $-s_M$, so it is not induced by an orientation-preserving self-map. Every self-equivalence induces an isometry, so the image of $\pi\colon\Aut_\bullet(M)\to\Aut_\bullet(P)=GL_2(\mathbb{Z})$ lies in $SL_2(\mathbb{Z})$.
The group $\Aut_\bullet(S^3\times S^3)$ can be described completely in elementary terms, because $S^3=SU(2)$ is a topological group. The argument uses the Samelson product $\langle\iota_3,\iota_3\rangle$, whose order is computed by Bott \cite{bott}.

\begin{lemma}\label{lem:AutN}
Let $M=S^3\times S^3$, let $N:=[M,S^3]_\bullet$, a group under pointwise multiplication in $S^3$, and let $x,y\in N$ be the two coordinate projections.
\begin{enumerate}
\item[(a)] The element $c:=[x,y]$ is central in $N$ and has order $12$, $N/\langle c\rangle\cong\mathbb{Z}^2$ is generated by the images of $x$ and $y$, and
\[
  N=\langle x,y,c\mid c\ \text{central},\ c^{12}=1,\ [x,y]=c\rangle .
\]
\item[(b)] The assignment $f\mapsto f^*$, $f^*(\varphi)=\varphi\circ f$ for a based map $\varphi\colon M\to S^3$ representing an element of $N$, is an anti-isomorphism from $\Aut_\bullet(M)$ onto the group $\Aut^+(N)$ of automorphisms of $N$ that induce a matrix of determinant $1$ on $N/\langle c\rangle\cong\mathbb{Z}^2$. In particular $\Aut_\bullet(M)\cong\Aut^+(N)$ as groups. The matrix induced by $f^*$ on $N/\langle c\rangle=H^3(M;\mathbb{Z})$ is the transpose of $\pi(f)$ in the dual basis.
\end{enumerate}
\end{lemma}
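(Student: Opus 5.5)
For (a) I would start from the cofibre sequence $S^5\xrightarrow{w}S^3\vee S^3\to M\xrightarrow{q}S^6$. Since $S^3$ is a group, mapping it into $S^3$ gives an exact sequence of groups
\[
\pi_4(S^3)^2\xrightarrow{w^*}\pi_6(S^3)\xrightarrow{q^*}N\to\pi_3(S^3)^2\xrightarrow{w^*}\pi_5(S^3).
\]
Whitehead products vanish in the group $S^3$, so the last $w^*$ is zero and $N\to\mathbb{Z}^2$ is onto, with $x,y$ mapping to the standard basis. The first $w^*$ is also zero, because $[\iota_3,\iota_3]$ composed into an $H$-space is trivial; more precisely $\Sigma w$ is null, so precomposition by $w$ on suspension classes vanishes. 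Hence $q^*\colon\pi_6(S^3)\cong\mathbb{Z}/12\to N$ is injective. The image is central, since $q^*$ of anything commutes with every map through $S^3\vee S^3$ up to homotopy, as one sees from the coaction formula of Lemma~\ref{lem:coaction}(a) and the pinch argument.

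The commutator $[x,y]$ restricts to the trivial map on $S^3\vee S^3$, so it equals $q^*$ of the Samelson product $\langle\iota_3,\iota_3\rangle\in\pi_6(S^3)$. By Bott \cite{bott} this element has order $12$, so it generates $\pi_6(S^3)$. These facts give the central extension
\[
1\to\mathbb{Z}/12\to N\to\mathbb{Z}^2\to1
\]
with $[x,y]=c$, and the stated presentation follows. Every element of $N$ can be written as $x^ay^bc^k$ with $(a,b,k)$ unique.

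For (b) the map $f\mapsto f^*$ is an anti-homomorphism into $\Aut(N)$, because pointwise multiplication commutes with precomposition. The induced map on $N/\langle c\rangle=H^3(M)$ is $f^*$ on cohomology, which is the transpose of $\pi(f)$. It has determinant $1$ because $f$ is orientation-preserving of degree one: $f^*(u\smile v)=u\smile v$ forces the $2\times2$ matrix to have determinant $1$.

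\emph{Injectivity.} If $\varphi\circ f\simeq\varphi$ for all $\varphi\in N$, then $x\circ f\simeq x$ and $y\circ f\simeq y$. The product map $(x,y)\colon M\to S^3\times S^3$ is the identity, so $f\simeq\id$.

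\emph{Surjectivity.} Given $\Phi\in\Aut^+(N)$, set $f=(\Phi(x),\Phi(y))\colon M\to M$. Then $f^*$ agrees with $\Phi$ on the generators $x,y$, so $f^*=\Phi$, since $c=[x,y]$ is determined by them. Next, $f$ induces a determinant-one matrix on $H^3$, so it is a homology isomorphism of simply connected spaces and hence an orientation-preserving equivalence. An anti-isomorphism composed with inversion yields the group isomorphism $\Aut_\bullet(M)\cong\Aut^+(N)$.

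The main obstacle is part (a): pinning down that $q^*$ is injective and that the commutator is exactly $q^*$ of the Samelson generator. I would handle this by identifying commutators of maps into $S^3$ that are trivial on the wedge with the Samelson product, in the standard way, and by invoking Bott for the order $12$.
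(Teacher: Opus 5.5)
Your proposal is correct and follows the paper's proof: the same Puppe sequence with $\Sigma w\simeq 0$ giving injectivity of $q^*$, Bott's computation of $\langle\iota_3,\iota_3\rangle$, and for (b) the same injectivity via $f=(f^*x,f^*y)$ and surjectivity via $f=(\Phi(x),\Phi(y))$ together with Whitehead's theorem. The only local difference is centrality of $q^*\pi_6(S^3)$: you get it from Lemma~\ref{lem:coaction}(a), applied to $S^3$ and to its opposite group, so that both products equal $\nabla\circ(\varphi\vee\psi)\circ\mathrm{pinch}$, whereas the paper factors every commutator through the reduced diagonal $M\to M\wedge M$ and deforms it into the $6$-skeleton. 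Also, the step you flag as the main obstacle is immediate, since $[x,y]$ is literally the commutator map $S^3\times S^3\to S^3$, which factors through $S^3\wedge S^3$ as the Samelson product by definition.
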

\begin{proof}
(a) Write $M=(S^3\vee S^3)\cup_w e^6$ with $w=[\iota_1,\iota_2]$, and let $q\colon M\to S^6$ be the pinch map. The Puppe sequence gives an exact sequence of groups
\[
  \pi_4(S^3)^2\xrightarrow{(\Sigma w)^*}\pi_6(S^3)\xrightarrow{q^*}N\xrightarrow{i^*}\pi_3(S^3)^2=\mathbb{Z}^2 .
\]
Since the suspension of a Whitehead product is null, $\Sigma w\simeq0$, so $q^*$ is injective and its image is a copy of $\pi_6(S^3)\cong\mathbb{Z}/12$. The map $i^*$ is onto because $i^*(x),i^*(y)$ form a basis.

The commutator of $g,h\in N$ is the composite of the diagonal $M\to M\times M$, the map $g\times h$, and the commutator map $S^3\times S^3\to S^3$. The commutator map is trivial on $S^3\vee S^3$, so $[g,h]$ factors as
\[
  M\xrightarrow{\ \bar\Delta\ }M\wedge M\xrightarrow{\ g\wedge h\ }S^3\wedge S^3\xrightarrow{\ \langle\iota_3,\iota_3\rangle\ }S^3,
\]
where $\bar\Delta$ is the reduced diagonal and $\langle\iota_3,\iota_3\rangle\in\pi_6(S^3)$ is the Samelson product. Since $\dim M=6$, the map $\bar\Delta$ may be deformed into the $6$-skeleton $(S^3\vee S^3)\wedge(S^3\vee S^3)$ of $M\wedge M$. If $h=q^*\alpha$, then $h$ is trivial on $S^3\vee S^3$, so $g\wedge h$ is trivial on this skeleton and $[g,h]=1$. Hence $q^*\pi_6(S^3)$ is central. For $g=x$ and $h=y$ the composite $(x\wedge y)\circ\bar\Delta$ is the quotient map $M\to S^6$, so $[x,y]=q^*\langle\iota_3,\iota_3\rangle$. By Bott \cite{bott}, the Samelson product $\langle\iota_3,\iota_3\rangle$ generates $\pi_6(S^3)\cong\mathbb{Z}/12$. Thus $c$ generates $\ker i^*$, which proves the first two assertions. The presentation follows, since the group defined by it is a central extension of $\mathbb{Z}^2$ by $\mathbb{Z}/12$ with the same generators and the same commutator.

(b) Because $S^3$ is a group, precomposition $f^*\colon N\to N$ is a homomorphism, and $(f\circ g)^*=g^*\circ f^*$. A based map $f\colon M\to M$ is determined up to based homotopy by $(f_1,f_2)=(\mathrm{pr}_1f,\mathrm{pr}_2f)=(f^*x,f^*y)$, so $f\mapsto f^*$ is injective. If $f$ is a self-equivalence, $f^*$ is an automorphism, and $f^*$ induces on $N/\langle c\rangle\cong H^3(M;\mathbb{Z})$ the map $f^*$ in cohomology, whose determinant is the degree of $f$, because $H^6(M)=H^3(M)\wedge H^3(M)$. So orientation-preserving equivalences map into $\Aut^+(N)$. Conversely, given $\psi\in\Aut^+(N)$ put $f=(\psi(x),\psi(y))\colon M\to S^3\times S^3$. Then $f^*$ and $\psi$ are homomorphisms that agree on the generators $x,y$, so $f^*=\psi$. The map $f$ induces an isomorphism on $H^3$ and, having degree $\det=1$, on $H^6$, so it is a homology isomorphism of simply connected spaces and hence a homotopy equivalence.
\end{proof}
\begin{theorem}\label{thm:S3xS3_split}
There is a split short exact sequence
\[
  0 \longrightarrow \mathbb{Z}/12 \oplus \mathbb{Z}/12 \longrightarrow \Aut_\bullet(S^3 \times S^3) \xrightarrow{\pi} SL_2(\mathbb{Z}) \longrightarrow 0 .
\]
Identify the kernel with $V=\Hom(\mathbb{Z}^2,\mathbb{Z}/12)$. Then $SL_2(\mathbb{Z})$ acts on $V$ by $A\cdot\lambda=\lambda\circ A^{-1}$, the dual of the standard action on $(\mathbb{Z}/12)^2$. Here $A$ is the matrix induced on $N/\langle c\rangle$ by an automorphism of $N$; since $f\mapsto f^*$ reverses composition, the action of $\pi(f)$ in $\Aut_\bullet(M)$ differs from this one by the inverse transpose, which is an inner automorphism of $SL_2(\mathbb{Z})$, so the semidirect product is the same. Hence
\[
  \Aut_\bullet(S^3 \times S^3) \cong (\mathbb{Z}/12)^2\rtimes SL_2(\mathbb{Z}).
\]
This group is not a direct product $(\mathbb{Z}/12)^2\times SL_2(\mathbb{Z})$: its abelianization is $\mathbb{Z}/12$, while that of the direct product is $(\mathbb{Z}/12)^3$.
\end{theorem}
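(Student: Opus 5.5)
By Lemma~\ref{lem:AutN}(b) it suffices to work with the group $\Aut^+(N)$, where $N=\langle x,y,c\mid c \text{ central},\ c^{12}=1,\ [x,y]=c\rangle$. The plan has four steps: identify the kernel, construct a splitting, read off the action, and compute the abelianization.

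\emph{Kernel.} An automorphism $\psi$ of $N$ is determined by the pair $(\psi(x),\psi(y))$. It is a homomorphism precisely when $\psi(c)=[\psi(x),\psi(y)]$ is central of order dividing $12$; this holds automatically because commutators in $N$ are central. Write $\psi(x)=x^ay^bc^s$ and $\psi(y)=x^{a'}y^{b'}c^t$. Using bilinearity of the commutator (valid since $N$ is nilpotent of class $2$), we get $[\psi(x),\psi(y)]=c^{ab'-ba'}$. Hence a pair gives an automorphism in $\Aut^+(N)$ exactly when the matrix has determinant $1$; in that case $\psi(c)=c$. Surjectivity onto the inverse image is then automatic. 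The automorphisms inducing the identity on $N/\langle c\rangle$ are $\psi_{s,t}\colon x\mapsto xc^s,\ y\mapsto yc^t$. They compose additively, which identifies the kernel with $\Hom(\mathbb{Z}^2,\mathbb{Z}/12)=V$ via $\lambda\mapsto(g\mapsto g\,c^{\lambda(\bar g)})$. In particular the map to $SL_2(\mathbb{Z})$ is onto, and through $\pi$ this shows that the kernel of $\pi$ is $(\mathbb{Z}/12)^2$.

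\emph{Splitting.} The projection $N\to N/\langle c\rangle$ has no homomorphic section, so the splitting must be chosen more carefully. I would first try the following map $\sigma\colon SL_2(\mathbb{Z})\to\Aut^+(N)$: send $A$ to the automorphism with $x\mapsto x^ay^b c^{\binom{a}{2}\cdots}$, normalized using a quadratic correction. Concretely, choose the set-theoretic section $s(x^iy^j)=x^iy^jc^{-ij\cdot 6}$, or a symmetric normalization of the form $x^iy^jc^{q(i,j)}$ with $q$ quadratic, and check that $\sigma(AB)=\sigma(A)\sigma(B)$. Since $N$ is a Heisenberg-type group, this amounts to the classical fact that $SL_2$ acts on the Heisenberg group through the Weil-type normalization. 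A more robust alternative is to realize the generators $S=\mmatrix{0}{1}{-1}{0}$ and $T=\mmatrix{1}{1}{0}{1}$ by diffeomorphisms of $S^3\times S^3$, namely $(u,v)\mapsto(v,u^{-1})$ and $(u,v)\mapsto(uv,v)$, and then verify the relations $S^4=1$ and $(ST)^3=S^2$ in $\Aut_\bullet(M)$. These can be checked directly on the generators $x,y$ of $N$ using the group law of $S^3$: for instance $S^2$ is $(u,v)\mapsto(u^{-1},v^{-1})$, and one computes $(ST)^3$ as an explicit map. This verification of the relations is the step I expect to be the main obstacle. If a relation fails by an element of $V$, I would adjust the lift of $T$ by a twist $\psi_{s,t}$ and solve the resulting congruences mod $12$.

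\emph{Action.} Conjugating $\psi_\lambda$ by a lift $\tilde A$ gives $\tilde A\psi_\lambda\tilde A^{-1}(g)=g\,c^{\lambda(A^{-1}\bar g)}$, because $\tilde A$ fixes $c$. This is the action $\lambda\mapsto\lambda\circ A^{-1}$. The comparison with the action of $\pi(f)$ follows from the anti-isomorphism and the transpose statement in Lemma~\ref{lem:AutN}(b). The inverse transpose on $SL_2(\mathbb{Z})$ is conjugation by $S$, so it gives an isomorphic semidirect product.

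\emph{Abelianization.} This is $(V)_{SL_2}\times SL_2(\mathbb{Z})^{ab}=V_{SL_2}\times\mathbb{Z}/12$. The coinvariants vanish: $(T-1)$ and $(S-1)$ have images spanning $V$ (for instance $T-1$ hits $e_1$ and $S-1$ maps $e_1$ to $\pm e_2-e_1$). Hence the abelianization is $\mathbb{Z}/12$, whereas the direct product would have abelianization $(\mathbb{Z}/12)^3$. So the group is not a direct product.
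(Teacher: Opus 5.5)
\paragraph{Verdict.} Your treatment of the kernel, of the conjugation action $\lambda\mapsto\lambda\circ A^{-1}$, and of the abelianization (vanishing coinvariants and $SL_2(\mathbb{Z})^{ab}\cong\mathbb{Z}/12$) is correct and matches the paper. The gap is the splitting, which is the substantive content of the statement. You do not produce a section, and neither route you sketch works as written.

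\paragraph{The two routes you sketch.} The Weil-type normalization replaces the cocycle by $\tfrac12\omega$, so it needs $2$ to be invertible in the centre. That fails for $\mathbb{Z}/12$. Indeed, a bilinear cocycle $\left(\begin{smallmatrix}p&q\\r&s\end{smallmatrix}\right)$ with $q-r=\pm1$ is invariant under $T$ only if $p=0$ and $q+r\equiv0\pmod{12}$, which is impossible by parity. In the second route the relation you want to verify is false. For your $S=\left(\begin{smallmatrix}0&1\\-1&0\end{smallmatrix}\right)$ and $T=\left(\begin{smallmatrix}1&1\\0&1\end{smallmatrix}\right)$, the product $ST$ has trace $-1$, so $(ST)^3=I\neq S^2$ already in $SL_2(\mathbb{Z})$. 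The two diffeomorphisms even satisfy $(f_Sf_T)^3=\id$ exactly. No twist can repair a relation that fails in the quotient.

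\paragraph{Why correcting only $T$ cannot work.} Even with a correct presentation, keeping the diffeomorphism $f_S(u,v)=(v,u^{-1})$ and adjusting only $T$ fails.
\begin{itemize}
\item In $\Aut^+(N)$, $f_S^*$ is $x\mapsto y,\ y\mapsto x^{-1}$. This is a lift of $S_0=\left(\begin{smallmatrix}0&-1\\1&0\end{smallmatrix}\right)$, and its square is $x\mapsto x^{-1},\ y\mapsto y^{-1}$.
\item Every lift of $U=\left(\begin{smallmatrix}0&-1\\1&1\end{smallmatrix}\right)$ has the form $x\mapsto yc^{\alpha},\ y\mapsto x^{-1}yc^{\beta}$. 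Since $y^{-1}x^{-1}y=x^{-1}c^{-1}$, its cube is $x\mapsto x^{-1}c^{2\beta-1},\ y\mapsto y^{-1}c^{2\beta-2\alpha-1}$.
\item Because $SL_2(\mathbb{Z})=\langle S_0,U\mid S_0^4=1,\ S_0^2=U^3\rangle$, a section through $f_S^*$ would require $2\beta\equiv1\pmod{12}$. So $f_S$ lies in no splitting at all.
\end{itemize}
More generally, a lift $x\mapsto yc^{\gamma},\ y\mapsto x^{-1}c^{\delta}$ of $S_0$ extends to a section if and only if $\gamma+\delta$ is odd.

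\paragraph{The missing idea.} You need a central correction on the lift of $S_0$ as well, for example $\tilde S\colon x\mapsto y,\ y\mapsto x^{-1}c$ and $\tilde U\colon x\mapsto y,\ y\mapsto x^{-1}yc$. Then check $\tilde S^4=1$ and $\tilde U^3=\tilde S^2$ directly in the class-$2$ group $N$, as the paper does. Without this, your argument gives the exact sequence, the action on the kernel and the abelianization, but not the semidirect product decomposition.
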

\begin{proof}
By Lemma~\ref{lem:AutN} we may work in $\Aut^+(N)$. Taking the induced matrix on $N/\langle c\rangle=\mathbb{Z}^2$ gives a homomorphism $\Aut^+(N)\to SL_2(\mathbb{Z})$. It is onto: for $A=\left(\begin{smallmatrix}a&b\\c'&d\end{smallmatrix}\right)$ with $ad-bc'=1$, the assignment $x\mapsto x^ay^{c'}$, $y\mapsto x^by^d$, $c\mapsto c$ respects the defining relations, because $N$ has class $2$ and so $[x^ay^{c'},x^by^d]=c^{ad-bc'}=c$, and it induces $A$ on $N/\langle c\rangle$ and the identity on $\langle c\rangle$, hence is bijective.

An automorphism in the kernel has the form $\tau_\lambda(g)=g\,c^{\lambda(\bar g)}$ for $\lambda\in V$, where $\bar g$ is the image of $g$ in $\mathbb{Z}^2$ (these are the pinch-fold twists considered below). This identifies the kernel with $V\cong(\mathbb{Z}/12)^2$. The kernel is thus a copy of $\pi_6(S^3)^2$, where $\pi_6(S^3)\cong\mathbb{Z}/12$ by Toda \cite{toda}; it is the kernel in Sieradski's exact sequence \cite[Example~3]{sieradski}. That the commutator $c=[x,y]$ has order $12$ is Bott's computation of the Samelson product \cite{bott}, used in Lemma~\ref{lem:AutN}. If $\varphi\in\Aut^+(N)$ induces $A$, then $\varphi(c)=c$, and a direct computation gives $\varphi\tau_\lambda\varphi^{-1}=\tau_{\lambda\circ A^{-1}}$. This is the stated action.

For the splitting we use the presentation $SL_2(\mathbb{Z})=\langle S,U\mid S^4=1,\ S^2=U^3\rangle$ with
\[
S=\begin{pmatrix}0&-1\\1&0\end{pmatrix},\qquad U=\begin{pmatrix}0&-1\\1&1\end{pmatrix}
\]
(the amalgam $\mathbb{Z}/4*_{\mathbb{Z}/2}\mathbb{Z}/6$, with $S^2=U^3=-I$). Define $\tilde S,\tilde U\in\Aut^+(N)$ by
\[
\tilde S\colon x\mapsto y,\ y\mapsto x^{-1}c,\ c\mapsto c;\qquad \tilde U\colon x\mapsto y,\ y\mapsto x^{-1}y\,c,\ c\mapsto c .
\]
They induce $S$ and $U$ on $N/\langle c\rangle$ (we write the matrix of an automorphism with the images of $x$ and $y$ as columns). In the class-$2$ group $N$ the commutator is bilinear and $c$ is central, so one computes directly:
\begin{align*}
\tilde S^2(x)&=x^{-1}c,&\tilde S^2(y)&=y^{-1}c,&\tilde S^4(x)&=(x^{-1}c)^{-1}c=x,&\tilde S^4(y)&=y,\\
\tilde U^2(x)&=x^{-1}yc,&\tilde U^2(y)&=x^{-1}c,&\tilde U^3(x)&=x^{-1}c,&\tilde U^3(y)&=y^{-1}c.
\end{align*}
Hence $\tilde S^4=1$ and $\tilde U^3=\tilde S^2$. (For the lifts without the factor $c$ the relation $\tilde U^3=\tilde S^2$ fails, so the central correction matters.) By the presentation, $S\mapsto\tilde S$, $U\mapsto\tilde U$ defines a homomorphism $SL_2(\mathbb{Z})\to\Aut^+(N)$ that is a section of the projection, so the sequence splits.

The action is not trivial: $S$ sends $\tau_\lambda$ to $\tau_{\lambda\circ S^{-1}}$, which differs from $\tau_\lambda$ for $\lambda=(1,0)$. For $G=V\rtimes SL_2(\mathbb{Z})$ we have $G^{ab}=V_{SL_2(\mathbb{Z})}\times SL_2(\mathbb{Z})^{ab}$, where $V_{SL_2(\mathbb{Z})}$ denotes the coinvariants. The map $\lambda\mapsto\lambda\circ U^{-1}-\lambda$ on $V$ has determinant $\det(U^{-1}-I)=1$ and is therefore invertible on $(\mathbb{Z}/12)^2$, so the coinvariants vanish. Since $SL_2(\mathbb{Z})^{ab}\cong\mathbb{Z}/12$, we get $G^{ab}\cong\mathbb{Z}/12$.
\end{proof}

\begin{corollary}\label{cor:S3S3_braid}
The group $\widehat\Omega^{\mathrm{spin}}_7(P,M)$ has order $144$, and $\gamma$ maps it bijectively onto $\ker\pi\cong(\mathbb{Z}/12)^2$.
\end{corollary}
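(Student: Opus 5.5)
By Corollary~\ref{im}, $\gamma$ maps $\widehat\Omega^{\mathrm{spin}}_7(P,M)$ onto $\ker\pi$. By Theorem~\ref{thm:S3xS3_split} the kernel has order $144$. By Theorem~\ref{thm:bordism_S3S3}(ii) the source has order at most $144$. A surjection from a set of at most $144$ elements onto a set of exactly $144$ elements is a bijection, and then the source has order exactly $144$. So the plan is to verify these three inputs and put them together.

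First, the kernel of $\pi$ in Theorem~\ref{thm:S3xS3_split} is the kernel of $\Aut^+(N)\to SL_2(\mathbb{Z})$, transported through the anti-isomorphism of Lemma~\ref{lem:AutN}(b). I will check that this is the same as the kernel of $\pi\colon\Aut_\bullet(M)\to\Aut_\bullet(P)=GL_2(\mathbb{Z})$. The lemma says the matrix induced by $f^*$ on $N/\langle c\rangle=H^3(M;\mathbb{Z})$ is the transpose of $\pi(f)$. Hence $f^*$ induces the identity exactly when $\pi(f)=1$, and $|\ker\pi|=|V|=144$.

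Second, surjectivity of $\gamma$ onto $\ker\pi$ is the exactness at $\Aut_\bullet(M)$ in Corollary~\ref{im}. Its hypotheses hold here: $M$ is a spin manifold with $n=3$, $m=6$, it is simply connected, and (T) holds. This requires no further work.

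Third, Theorem~\ref{thm:bordism_S3S3}(ii) gives $|\widehat\Omega^{\mathrm{spin}}_7(P,M)|\le144$. Since $\gamma$ is only a map of pointed sets, I will use only cardinalities: a surjection onto $\ker\pi$ forces $|\widehat\Omega^{\mathrm{spin}}_7(P,M)|\ge144$. Combined with the upper bound, the order is exactly $144$. A surjection between finite sets of equal cardinality is injective, so $\gamma$ is a bijection onto $\ker\pi\cong(\mathbb{Z}/12)^2$.

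As a byproduct, every inequality in the AHSS estimate of Theorem~\ref{thm:bordism_S3S3} must be an equality. Consequently, the image of $\pi_1(\mathcal E_\bullet(P))$ in the relative group is trivial, and the remaining differentials on $E_{5,2}$, $E_{6,1}$, $E_{7,0}$ vanish. The corollary does not need these facts.

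The only delicate point is the identification of the two kernels in the first step. It is not hard once the transpose relation from Lemma~\ref{lem:AutN}(b) is invoked.
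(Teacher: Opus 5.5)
Your proposal is correct and is essentially the paper's proof: $\gamma$ maps onto $\ker\pi$ by Corollary~\ref{im}, $|\ker\pi|=144$ by Theorem~\ref{thm:S3xS3_split}, the domain has at most $144$ elements by Theorem~\ref{thm:bordism_S3S3}(ii), and counting gives the bijection. One small point on your side remark: the triviality of the image of $\pi_1(\mathcal E_\bullet(P))$ in $\widehat\Omega^{\mathrm{spin}}_7(P,M)$ follows from the injectivity of $\gamma$ together with exactness at that group, not from the AHSS estimate being sharp; what sharpness gives, as the paper notes, is the vanishing of $d_3\colon E^3_{8,0}\to E^3_{5,2}$.
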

\begin{proof}
By Corollary~\ref{im}, $\gamma$ maps onto $\ker\pi$, which has $144$ elements by Theorem~\ref{thm:S3xS3_split}. By Theorem~\ref{thm:bordism_S3S3}(ii), the domain has at most $144$ elements. Hence $\gamma$ is a bijection.
\end{proof}

Thus the upper bound of Theorem~\ref{thm:bordism_S3S3}(ii) is attained. In particular $E^\infty_{5,2}\cong(\mathbb{Z}/2)^2$, so the differential $d_3\colon E^3_{8,0}\to E^3_{5,2}$ vanishes.

\begin{remark}
Sieradski \cite[Example~3]{sieradski} obtains an exact sequence $0\to\mathbb{Z}/12\oplus\mathbb{Z}/12\to G(S^3\times S^3)\to GL_2(\mathbb{Z})\to1$, where $G$ also contains the equivalences of degree $-1$. Its orientation-preserving part is the sequence of Theorem~\ref{thm:S3xS3_split}. The splitting and the action of $SL_2(\mathbb{Z})$ on the kernel are not part of the exact sequence as stated by Sieradski.
\end{remark}
\subsection{Connected sums of copies of $S^3\times S^3$}
\label{subsec:connsum}

For $r\ge1$ let $M_r=\connsum_r(S^3\times S^3)$, with basepoint in the connecting region. Let $s_{i,1},s_{i,2}\colon S^3\to M_r$ be the inclusions of the two spheres of the $i$-th summand, so that $M_r=(\bigvee_{2r}S^3)\cup_w e^6$ with $w=\sum_i[s_{i,1},s_{i,2}]$, and let $q\colon M_r\to S^6$ be the pinch map onto the top cell. Put $N_r:=[M_r,S^3]_\bullet$, a group under pointwise multiplication in $S^3$, and let $x_i,y_i\in N_r$ be the classes obtained by collapsing all summands except the $i$-th and projecting onto the first, respectively second, $S^3$-factor. Thus $x_i s_{k,1}$ is the identity of $S^3$ for $k=i$ and null otherwise, $x_is_{k,2}$ is null, and similarly for $y_i$.

For $\kappa\in\pi_6(S^3)$ put $t_{i,j}(\kappa):=t_{\tilde\kappa}$ with $\tilde\kappa=s_{i,j}\circ\kappa\in\pi_6(M_r)$, and let $f_{i,j}:=t_{i,j}(\varpi)$, where $\varpi$ is a generator of $\pi_6(S^3)\cong\mathbb{Z}/12$.

\begin{theorem}\label{non-splitting_extension}
Let $r\ge2$.
\begin{enumerate}
\item[(a)] The elements $f_{i,j}$ commute and generate a subgroup $K_r^0\cong(\mathbb{Z}/12)^{2r}$ of the kernel of $\pi\colon\Aut_\bullet(M_r)\to\Aut_\bullet(P)$.
\item[(b)] Let $\tau_{ik}$ be an orientation-preserving diffeomorphism of $M_r$ that exchanges the $i$-th and $k$-th summands, in the sense that $\tau_{ik}\circ s_{i,j}\simeq s_{k,j}$, $\tau_{ik}\circ s_{k,j}\simeq s_{i,j}$ and $\tau_{ik}\circ s_{l,j}\simeq s_{l,j}$ for $l\ne i,k$ and $j=1,2$. Then $\tau_{ik}f_{i,j}\tau_{ik}^{-1}=f_{k,j}$ and $\tau_{ik}f_{k,j}\tau_{ik}^{-1}=f_{i,j}$ for $j=1,2$, and $\tau_{ik}f_{l,j}\tau_{ik}^{-1}=f_{l,j}$ for $l\ne i,k$. In particular the diffeomorphisms $\tau_{ik}$ normalize $K_r^0$, they generate a group that acts on $K_r^0$ through $\Sigma_r$ by permuting the $r$ pairs of generators, and so the conjugation action of $\Aut_\bullet(M_r)$ on $K_r^0$ is not trivial.
\end{enumerate}
\end{theorem}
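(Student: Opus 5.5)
Following Lemma~\ref{lem:AutN}, I will work through the group $N_r=[M_r,S^3]_\bullet$ and precomposition. Here $N_r$ sits in the Puppe sequence
\[
\pi_4(S^3)^{2r}\xrightarrow{(\Sigma w)^*}\pi_6(S^3)\xrightarrow{q^*}N_r\to\pi_3(S^3)^{2r}.
\]
Since $\Sigma w\simeq 0$, the image $q^*\pi_6(S^3)\cong\mathbb{Z}/12$ is central. This follows by the argument of Lemma~\ref{lem:AutN}(a): the commutator factors through the reduced diagonal, which deforms into the $6$-skeleton of $M_r\wedge M_r$, and $q^*\alpha$ is trivial on $\bigvee S^3$.

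\emph{Well-definedness of $f_{i,j}$.} Each $f_{i,j}$ is a pinch-fold map $t_{\tilde\kappa}$ with $\tilde\kappa=s_{i,j}\varpi$ mapping into $K=\bigvee_{2r}S^3$. Its Hurewicz image vanishes, so by the remarks before Lemma~\ref{lem:coaction} it is a degree-one homotopy equivalence. It restricts to the inclusion on $K$, so $\pi(f_{i,j})=1$.

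\emph{Part (a).} Lemma~\ref{lem:coaction}(b) gives that the $t_{\tilde\kappa}$ with $\tilde\kappa$ in the image of $\pi_6(K)$ commute, and that $\tilde\kappa\mapsto t_{\tilde\kappa}$ is a homomorphism from $\pi_6(K)$ to $\Aut_\bullet(M_r)$. Hence $(\kappa_{i,j})\mapsto\prod t_{i,j}(\kappa_{i,j})$ is a homomorphism $\pi_6(S^3)^{2r}\to\ker\pi$ with image $K_r^0$; in particular each $f_{i,j}$ has order dividing $12$. For injectivity I will use Lemma~\ref{lem:coaction}(a) with $G=S^3$. For $\varphi\in N_r$,
\[
\varphi\circ t_{\tilde\kappa}=\varphi\cdot q^*(\varphi\circ\tilde\kappa).
\]
Take $\varphi=x_k$ or $y_k$. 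Then $x_k\circ\bigl(\prod_{i,j} t_{i,j}(\kappa_{i,j})\bigr)$ equals $x_k\cdot q^*(\kappa_{k,1})$, because $x_k s_{i,j}$ is $\iota_3$ for $(i,j)=(k,1)$ and null otherwise. The twists are iterated pinch-folds on disjoint discs, so their coaction contributions add, and $q^*$ is central. Similarly $y_k\circ(\cdots)=y_k\cdot q^*(\kappa_{k,2})$. Since $q^*$ is injective, the product is nullhomotopic only if all $\kappa_{i,j}=0$. This uses only that a self-map of $M_r$ which is trivial after composing with all $x_k,y_k$ detects these central classes; I do not need that $f\mapsto f^*$ is injective. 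Hence $K_r^0\cong(\mathbb{Z}/12)^{2r}$.

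\emph{Part (b).} This is Lemma~\ref{lem:coaction}(c): $\tau_{ik}f_{l,j}\tau_{ik}^{-1}\simeq t_{\tau_{ik*}(s_{l,j}\varpi)}$, and by hypothesis $\tau_{ik}\circ s_{l,j}$ is $s_{k,j}$, $s_{i,j}$ or $s_{l,j}$ according as $l=i$, $l=k$ or neither. So conjugation by $\tau_{ik}$ swaps $f_{i,j}\leftrightarrow f_{k,j}$ and fixes the others. The $\tau_{ik}$ therefore normalize $K_r^0$ and act on the basis $\{f_{i,j}\}$ through transpositions, hence through $\Sigma_r$ permuting the pairs $(f_{i,1},f_{i,2})$. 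Since $r\ge2$ and $f_{1,1}\ne f_{2,1}$ by part (a), the action is nontrivial.

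The main obstacle I expect is the injectivity step, which requires the coaction formula for several simultaneous twists. The fallback is to treat one twist at a time: detect $\kappa_{k,1}$ by composing with $x_k$ using Lemma~\ref{lem:coaction}(a) on the product, with the twists written as a single $t_{\tilde\kappa}$, $\tilde\kappa=\sum s_{i,j}\kappa_{i,j}$, via part (b).
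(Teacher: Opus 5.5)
Your proposal is correct and follows essentially the same route as the paper: the twists are combined via Lemma~\ref{lem:coaction}(b), injectivity is detected in $N_r$ via Lemma~\ref{lem:coaction}(a) together with injectivity of $q^*$ (from $\Sigma w\simeq0$), and part (b) follows from Lemma~\ref{lem:coaction}(c). The only cosmetic difference is that you merge all twists into a single $t_{\tilde\kappa}$ with $\tilde\kappa=\sum s_{i,j}\kappa_{i,j}$ before applying the coaction formula, so centrality of $q^*\pi_6(S^3)$ is not actually needed; the paper instead composes one twist at a time, using that precomposition is a homomorphism of $N_r$ and that $q\circ t\simeq q$.
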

\begin{proof}
(a) Each $\tilde\kappa=s_{i,j}\kappa$ lies in the $3$-skeleton $\bigvee_{2r}S^3$, on which every pinch-fold map $t_{\tilde\kappa'}$ with $\tilde\kappa'$ in the $3$-skeleton restricts to the inclusion. Lemma~\ref{lem:coaction} therefore shows that all the maps $t_{i,j}(\kappa)$ commute and that $t_{i,j}(\kappa)\circ t_{i,j}(\kappa')\simeq t_{i,j}(\kappa+\kappa')$. Hence $a\mapsto F_a=\prod_{i,j}t_{i,j}(a_{i,j}\varpi)$ is a homomorphism $(\mathbb{Z}/12)^{2r}\to\Aut_\bullet(M_r)$ with image $K_r^0$. Each $t_{i,j}(\kappa)$ is the identity on the $3$-skeleton, so it induces the identity on $\pi_3(M_r)$ and on $P$, and $K_r^0\subset\ker\pi$.

For injectivity we compute the action on $N_r$. Let $t=t_{i,2}(\kappa)$. Lemma~\ref{lem:coaction} with $G=S^3$ gives $\varphi\circ t\simeq\varphi\cdot q^*(\varphi\circ s_{i,2}\circ\kappa)$ for every $\varphi\in N_r$. For $\varphi=y_i$ we have $y_is_{i,2}=\id$, so $y_i\circ t=y_i\cdot q^*\kappa$. For the other generators, $x_k s_{i,2}$ and $y_ks_{i,2}$ ($k\ne i$) are null, so $x_k\circ t=x_k$ and $y_k\circ t=y_k$, and $x_i\circ t=x_i$. Also $q\circ s_{i,2}\circ\kappa\simeq*$, so $q\circ t\simeq q$ and $t$ fixes every class $q^*\beta$. The twists $t_{i,1}$ act in the same way on $x_i$. Since precomposition with a map is a homomorphism of $N_r$, composing gives
\[
  y_i\circ F_a=y_i\cdot q^*(a_{i,2}\varpi),\qquad x_i\circ F_a=x_i\cdot q^*(a_{i,1}\varpi).
\]
The cofibre sequence $S^5\xrightarrow{w}\bigvee_{2r}S^3\to M_r\xrightarrow{q}S^6\xrightarrow{\Sigma w}\bigvee_{2r}S^4$ gives an exact sequence of groups
\[
  \pi_4(S^3)^{2r}\xrightarrow{(\Sigma w)^*}\pi_6(S^3)\xrightarrow{q^*}N_r .
\]
Since $w$ is a sum of Whitehead products, $\Sigma w\simeq0$, so $q^*$ is injective. If $F_a\simeq\id$, then $q^*(a_{i,j}\varpi)=1$ for all $i,j$, hence $a_{i,j}\varpi=0$ and $a=0$.

(b) For an orientation-preserving diffeomorphism $h$ one has $h\circ t_{\tilde\kappa}\circ h^{-1}\simeq t_{h\circ\tilde\kappa}$ by Lemma~\ref{lem:coaction}(c). The hypotheses on $\tau_{ik}$ give $\tau_{ik}\circ s_{i,j}\kappa\simeq s_{k,j}\kappa$, $\tau_{ik}\circ s_{k,j}\kappa\simeq s_{i,j}\kappa$ and $\tau_{ik}\circ s_{l,j}\kappa\simeq s_{l,j}\kappa$, and the claim follows.
\end{proof}

\begin{theorem}\label{thm:Mr_kernel}
Let $r\ge1$ and $M_r=\connsum_r(S^3\times S^3)$.
\begin{enumerate}
\item[(a)] The image of $\pi\colon\Aut_\bullet(M_r)\to\Aut_\bullet(P)$ is $\Isom[\pi_3(M_r),s_{M_r}]\cong Sp_{2r}(\mathbb{Z})$.
\item[(b)] The group $\widehat\Omega^{\mathrm{spin}}_7(P,M_r)$ has order $144^r$, $\gamma$ maps it bijectively onto $\ker\pi$, and $\ker\pi=K_r^0\cong(\mathbb{Z}/12)^{2r}$.
\end{enumerate}
Hence there is an exact sequence $1\to(\mathbb{Z}/12)^{2r}\to\Aut_\bullet(M_r)\to Sp_{2r}(\mathbb{Z})\to1$.
\end{theorem}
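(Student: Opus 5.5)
Part (a) splits into an upper bound and realization. For the upper bound, $P=K(\mathbb{Z}^{2r},3)$ and $\Aut_\bullet(P)=GL_{2r}(\mathbb{Z})$. Every orientation-preserving self-equivalence preserves the cup product pairing $H^3\otimes H^3\to H^6$. It also preserves the fundamental class, so it induces an isometry of the skew form $s_{M_r}$, which is the orthogonal sum of $r$ copies of $\left(\begin{smallmatrix}0&1\\-1&0\end{smallmatrix}\right)$. Hence $\operatorname{im}\pi\subset Sp_{2r}(\mathbb{Z})$.

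For surjectivity I would use the classical fact that $Sp_{2r}(\mathbb{Z})$ is generated by elementary symplectic transvections. Each generator is realized by a diffeomorphism of $M_r$. Inside a single summand $S^3\times S^3$, the generators $S,U$ of $SL_2(\mathbb{Z})$ are realized by self-equivalences (Theorem~\ref{thm:S3xS3_split}). One wants diffeomorphisms here, so that they extend over the connected sum by the identity. For this I would use the diffeomorphisms $(u,v)\mapsto(v,u^{-1})$ and $(u,v)\mapsto(u,uv)$ of $S^3\times S^3$, both of which fix a disc up to isotopy. The transvections mixing two summands are realized by the standard handle-slide diffeomorphisms of $(S^3\times S^3)\connsum(S^3\times S^3)$, as in the Wall/Kreck description of the mapping class group of $\connsum_r(S^p\times S^p)$.

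Part (b) starts with an upper bound on $\widehat\Omega^{\mathrm{spin}}_7(P,M_r)$, following the proof of Theorem~\ref{thm:bordism_S3S3} with $\mathbb{Z}^{2r}$ in place of $\mathbb{Z}^2$.

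The first step is $\widehat\Omega^{\mathrm{spin}}_6(M_r)=0$. This comes from the stable splitting $\Sigma^\infty M_r\simeq\bigvee_{2r}S^3\vee S^6$, which holds because the attaching map is a sum of Whitehead products and so suspends to zero. Hence $\widehat\Omega_7(P,M_r)\cong\operatorname{coker}c_*$.

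In the AHSS for $P$ in total degree $7$, $F_3$ lies in $\operatorname{im}c_*$ as before. The remaining terms are as follows:
\begin{itemize}
\item $E^2_{7,0}=H_7(P)$, which has order $3^{2r}$, since the cross terms $H_3\otimes H_4$ and $\mathrm{Tor}$ vanish.
\item $E^2_{5,2}=(\mathbb{Z}/2)^{2r}$.
\item $E^2_{6,1}=H_6(P;\mathbb{Z}/2)$, with basis dual to the $\iota_a\iota_b$ for $a\le b$, so of dimension $2r+\binom{2r}{2}$.
\end{itemize}
Here I hit the main obstacle: the naive bound exceeds $144^r$. Since $\mathrm{Sq}^2(\iota_a^2)=0$, the $d_2$ from $E_{8,0}$ kills exactly the span of the $\iota_a\iota_b$ with $a<b$, and $E^3_{6,1}$ is dual to the squares $\iota_a^2$. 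That leaves order $2^{2r}$, and the total bound is $2^{2r}\cdot2^{2r}\cdot3^{2r}=144^r$. What needs checking is that $\mathrm{Sq}^2(\iota_a\iota_b)=\mathrm{Sq}^2\iota_a\,\iota_b+\iota_a\,\mathrm{Sq}^2\iota_b$ for $a<b$ are linearly independent in $H^8(P;\mathbb{Z}/2)$, whose basis is the products $\iota_c\,\mathrm{Sq}^2\iota_d$. I expect this to be routine, since distinct pairs involve disjoint basis monomials. The dual $d_2$ then has rank $\binom{2r}{2}$, as required.

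With the bound in hand, Corollary~\ref{im} gives $|\ker\pi|\le|\widehat\Omega_7(P,M_r)|\le144^r$. Theorem~\ref{non-splitting_extension}(a) supplies $K_r^0\subset\ker\pi$ of order $144^r$; I would reprove its injectivity argument for $r=1$ as well. Hence all inequalities are equalities, $\gamma$ is a bijection, and $\ker\pi=K_r^0$. The exact sequence follows by combining this with (a).
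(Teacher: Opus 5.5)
Your part (b) follows the paper's argument: $\widehat\Omega^{\mathrm{spin}}_6(M_r)=0$, $F_3=F_4\subset\operatorname{im}c_*$, and the bound $2^{2r}\cdot2^{2r}\cdot3^{2r}$ obtained from the rank-$\binom{2r}{2}$ differential $d_2\colon E^2_{8,0}\to E^2_{6,1}$. One step is left implicit. Reduction mod $2$ maps $H_8(P;\mathbb{Z})$ onto $H_8(P;\mathbb{Z}/2)$ because $H_7(P;\mathbb{Z})$ has no $2$-torsion, and this is what lets you read the rank of $d_2$ off the rank of $\mathrm{Sq}^2$. Your caution about $r=1$ is well placed, since Theorem~\ref{non-splitting_extension} is stated for $r\ge2$; Theorem~\ref{thm:S3xS3_split} covers $r=1$.

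The genuine difference is in (a). The paper uses neither a generating set of $Sp_{2r}(\mathbb{Z})$ nor diffeomorphisms. For arbitrary $F$ it defines $f_0s_a=\sum_bF_{ba}s_b$ on $\bigvee_{2r}S^3$. It then checks $f_0\circ w=w$ directly, using naturality, bilinearity and skew-symmetry of Whitehead products on $\pi_3$, together with $[s_a,s_a]=0$ and the splitting $J=J^+-(J^+)^T$. So $f_0$ extends over the $6$-cell to a degree-one homology isomorphism, which is short and self-contained.

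Your route proves more: every element of $Sp_{2r}(\mathbb{Z})$ is induced by a diffeomorphism. However, its key step, realizing the transvections that mix summands, is only cited. It rests on Wall's realization theorem for $(n-1)$-connected $2n$-manifolds. That theorem gives all of $Sp_{2r}(\mathbb{Z})$ here because the normal-bundle invariant takes values in $\pi_2(SO(3))=0$. Since the statement concerns $\Aut_\bullet(M_r)$, the homotopy-theoretic extension over the top cell is enough and avoids this external input.
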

\begin{proof}
(a) Every self-equivalence of degree one preserves $s_{M_r}$, so the image lies in the isometry group. Conversely, let $F\in Sp_{2r}(\mathbb{Z})$, and write $s_1,\dots,s_{2r}$ for the inclusions of the $3$-spheres, ordered so that $w=\sum_{a<b}J_{ab}[s_a,s_b]$ with $J$ the standard skew form. Define $f_0\colon\bigvee_{2r}S^3\to\bigvee_{2r}S^3$ by $f_0s_a=\sum_bF_{ba}s_b$. The Whitehead product is natural, bilinear and, on $\pi_3$, skew: $[s_b,s_a]=-[s_a,s_b]$; moreover $[s_a,s_a]=0$ since $S^3$ is an $H$-space. Let $J^+_{ab}=J_{ab}$ for $a<b$ and $J^+_{ab}=0$ otherwise, so that $J=J^+-(J^+)^T$, and put $G=FJ^+F^T$. Then
\[
f_0\circ w=\sum_{c,d}G_{cd}[s_c,s_d]=\sum_{c<d}(G_{cd}-G_{dc})[s_c,s_d]=\sum_{c<d}(FJF^T)_{cd}[s_c,s_d]=w,
\]
because $FJF^T=J$ (the group $Sp_{2r}(\mathbb{Z})$ is closed under transposition). Hence $f_0$ extends to a map $f\colon M_r\to M_r$. It induces $F$ on $\pi_3$, and since $F$ preserves the cup product pairing $H^3\times H^3\to H^6=\mathbb{Z}$ the degree of $f$ is $1$. So $f$ is a homology isomorphism of simply connected spaces, hence a homotopy equivalence, and $\pi(f)=F$.

(b) By Corollary~\ref{im}, $\gamma$ maps $\widehat\Omega^{\mathrm{spin}}_7(P,M_r)$ onto $\ker\pi$, and $K_r^0\subseteq\ker\pi$ has order $144^r$ by Theorem~\ref{non-splitting_extension}(a). So it suffices to show that $|\widehat\Omega^{\mathrm{spin}}_7(P,M_r)|\le144^r$. Here $P=K(\mathbb{Z}^{2r},3)$. The proof of Theorem~\ref{thm:bordism_S3S3} applies with the following changes. The stable splitting of $M_r$ gives $\widehat\Omega^{\mathrm{spin}}_6(M_r)=0$, so $\widehat\Omega^{\mathrm{spin}}_7(P,M_r)=\operatorname{coker}[c_*\colon\Omega^{\mathrm{spin}}_7(M_r)\to\Omega^{\mathrm{spin}}_7(P)]$. By the K\"unneth formula and Lemma~\ref{lem:homology_K(ZZ,3)} applied to each factor of $P=K(\mathbb{Z},3)^{2r}$, $H_7(P;\mathbb{Z})\cong(\mathbb{Z}/3)^{2r}$ and $H_4(P;\mathbb{Z})=0$. The mod $2$ cohomology of $P$ is polynomial on the classes $\iota_a$ and $\mathrm{Sq}^2\iota_a$, $1\le a\le2r$, in degrees at most $8$. So $H^5(P;\mathbb{Z}/2)\cong(\mathbb{Z}/2)^{2r}$, $H^7(P;\mathbb{Z}/2)=0$, and $H^6(P;\mathbb{Z}/2)$ has the basis $\iota_a^2$ and $\iota_a\iota_b$ ($a<b$), of dimension $2r+\binom{2r}{2}$. Thus $E^2_{5,2}\cong(\mathbb{Z}/2)^{2r}$, $E^2_{6,1}\cong(\mathbb{Z}/2)^{2r+\binom{2r}{2}}$, $E^2_{7,0}\cong(\mathbb{Z}/3)^{2r}$ and $E^2_{4,3}=0$. Since $H_7(P;\mathbb{Z})$ has no $2$-torsion, reduction mod $2$ maps $H_8(P;\mathbb{Z})$ onto $H_8(P;\mathbb{Z}/2)$, so $d_2\colon E^2_{8,0}\to E^2_{6,1}$ has the rank of $\mathrm{Sq}^2\colon H^6(P;\mathbb{Z}/2)\to H^8(P;\mathbb{Z}/2)$. We have $\mathrm{Sq}^2(\iota_a^2)=(\mathrm{Sq}^1\iota_a)^2=0$ and $\mathrm{Sq}^2(\iota_a\iota_b)=\mathrm{Sq}^2\iota_a\cdot\iota_b+\iota_a\cdot\mathrm{Sq}^2\iota_b$, and these $\binom{2r}{2}$ elements involve disjoint sets of monomials in the basis of $H^8(P;\mathbb{Z}/2)$, so the rank is $\binom{2r}{2}$ and $E^3_{6,1}$ has order $2^{2r}$. As in the case $r=1$, $F_3=F_4$ lies in the image of $c_*$, because $P^{(3)}=\bigvee_{2r}S^3\subset M_r$, and the cokernel of $c_*$ has order at most $|E^2_{5,2}|\cdot|E^3_{6,1}|\cdot|E^2_{7,0}|=2^{2r}\cdot2^{2r}\cdot3^{2r}=144^r$.
\end{proof}

For $r=1$ this recovers Theorem~\ref{thm:S3xS3_split} and Corollary~\ref{cor:S3S3_braid}. For $r\ge2$ the kernel is abstractly isomorphic to $H^3(M_r;\pi_6(S^3))$, and by Theorem~\ref{non-splitting_extension}(b) the permutation diffeomorphisms act on it non-trivially. The classes $[\iota_a,\iota_b]\circ\eta_5$ in $\pi_6(M_r)$ give no new elements of the kernel. Indeed, each of them is a Whitehead product $[s_d,s_c\circ\eta]$ of a generator with an element of $\pi_4(M_r)$, and such products lie in the subgroup of $\pi_6(M_r)$ that gives trivial twists.

The braid does not decide whether the extension splits. This question is answered by Crowley \cite{crowley}, who describes the extensions $0\to H_p(M_r)\otimes\pi_{2p}(S^p)\to\mathcal{E}(M_r)\to Sp_{2r}(\mathbb{Z})\to1$ for $M_r=\connsum_r(S^p\times S^p)$ with $p=3,7$ and shows that they split if and only if $r=1$. In particular the extension of Theorem~\ref{thm:Mr_kernel} does not split for $r\ge2$. Crowley's identification of the kernel and the image agrees with Theorem~\ref{thm:Mr_kernel}; here both are obtained from the braid.

\subsection{The case $M=S^3\times S^4$}
\label{subsec:S3xS4}

Let $M=S^3\times S^4$. Then $m=7=2n-1$ with $n=4$, so $\B=BO\langle5\rangle=BString$. The manifold is parallelizable, hence string, and simply connected, so (T) holds and Theorem~\ref{thm:braid_exists} applies. Here $\pi_3(M)=\mathbb{Z}$, $\pi_4(M)=\pi_4(S^3)\oplus\pi_4(S^4)\cong\mathbb{Z}/2\oplus\mathbb{Z}$, and $P$ is a two-stage Postnikov system $K(\mathbb{Z}/2\oplus\mathbb{Z},4)\to P\to K(\mathbb{Z},3)$ with $k$-invariant $(\mathrm{Sq}^2\iota_3,0)$. An equivalence of $P$ that induces the identity on $\pi_3$ and $\pi_4$ is homotopic to the identity: since $H^4(K(\mathbb{Z},3);\pi_4)=0$, the restriction $H^4(P;\pi_4)\to\mathrm{Hom}(\pi_4,\pi_4)$ is injective, and it records the effect on $\pi_4$. So $\pi(f)$ is determined by the action of $f$ on $\pi_3(M)$ and $\pi_4(M)$.

We use the following facts from Toda \cite[Chapter~V]{toda}. The group $\pi_7(S^3)\cong\mathbb{Z}/2$ is generated by $\nu'\eta_6$, and $\pi_7(S^4)\cong\mathbb{Z}\{\nu_4\}\oplus E\pi_6(S^3)$ with $E\pi_6(S^3)\cong\mathbb{Z}/12$. The Whitehead square is
\[
[\iota_4,\iota_4]=\pm\bigl(2\nu_4-E\nu'\bigr),
\]
where $\nu'$ generates the $2$-primary part $\mathbb{Z}/4$ of $\pi_6(S^3)$, and $\eta_3\circ\nu_4=\nu'\circ\eta_6$, where $\eta_3\in\pi_4(S^3)$ is the Hopf class. Let $D=\langle[\iota_4,\iota_4]\rangle\subset\pi_7(S^4)$ and $Q=\pi_7(S^4)/D$. Since $[\iota_4,\iota_4]=2\nu_4-u\,\vartheta$ with $\vartheta$ a generator of $E\pi_6(S^3)\cong\mathbb{Z}/12$ and $u$ odd, the group $Q$ is generated by the classes of $\nu_4$ and $\vartheta$ subject to $12\vartheta=0$ and $2\nu_4=u\vartheta$, so its $2$-primary part is $\mathbb{Z}/8$, generated by $\nu_4$, and its $3$-primary part is $\mathbb{Z}/3$. Thus $Q\cong\mathbb{Z}/24$. (This is also the image of $\pi_7(S^4)$ in the stable group $\pi_3^s$, as it must be, since $D$ is the kernel of the suspension.) Left composition with $\eta_3$ is a homomorphism $\pi_7(S^4)\to\pi_7(S^3)$, and it kills $D$, because $\eta_3\circ[\iota_4,\iota_4]=[\eta_3,\eta_3]=0$, a Whitehead product in the $H$-space $S^3$. So it induces a homomorphism $\varepsilon\colon Q\to\mathbb{Z}/2$ with $\varepsilon(\nu_4)=\nu'\eta_6\ne0$, the unique surjection.

\begin{theorem}\label{thm:S3S4}
For $M=S^3\times S^4$:
\begin{enumerate}
\item[(a)] The kernel of $\pi\colon\Aut_\bullet(M)\to\Aut_\bullet(P)$ is the group of pinch-fold twists, $\ker\pi\cong\pi_7(S^3)\oplus Q\cong\mathbb{Z}/2\oplus\mathbb{Z}/24$.
\item[(b)] The image of $\pi$ is a group $G\cong(\mathbb{Z}/2)^2$, generated by the images of the diffeomorphisms $s(x,y)=(\bar x,r(y))$ and $\sigma(x,y)=(x\cdot\eta_3(y),y)$, where $x\mapsto\bar x$ is quaternionic conjugation on $S^3$ and $r$ is a reflection of $S^4$.
\item[(c)] The extension splits, and
\[
\Aut_\bullet(S^3\times S^4)\cong(\mathbb{Z}/2\oplus\mathbb{Z}/24)\rtimes(\mathbb{Z}/2)^2,
\]
where the two generators act by $(a,b)\mapsto(a,-b)$ and $(a,b)\mapsto(a+\varepsilon(b),b)$ respectively. In particular $\Aut_\bullet(S^3\times S^4)$ is finite, of order $192$.
\end{enumerate}
\end{theorem}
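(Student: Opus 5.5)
The plan is to identify the kernel with pinch-fold twists using Lemma~\ref{lem:orbit}, to determine the image from the action on $\pi_3$ and $\pi_4$, and then to produce a splitting by explicit diffeomorphisms. Write $K=S^3\vee S^4$, and let $j_3,j_4$ be the inclusions of the two spheres. A based self-map $f$ of $M$ is determined by its two components $\mathrm{pr}_1f\in[M,S^3]_\bullet$ and $\mathrm{pr}_2f\in[M,S^4]_\bullet$.

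\emph{Kernel.} By the remark before the theorem, $\pi(f)=1$ exactly when $f$ induces the identity on $\pi_3(M)$ and on $\pi_4(M)$. This happens exactly when $f|_K$ is homotopic to the inclusion. So $\ker\pi$ is the set of $f$ whose components restrict on $K$ to $(\iota_3,*)$ and to $(*,\iota_4)$.

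For the first component, Lemma~\ref{lem:orbit} says that the fibre over $(\iota_3,*)$ is $\pi_7(S^3)/[\pi_5(S^3),\iota_3]$. This quotient is $\pi_7(S^3)\cong\mathbb{Z}/2$, since Whitehead products vanish in the $H$-space $S^3$.

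For the second component, the fibre over $(*,\iota_4)$ is $\pi_7(S^4)/[\pi_4(S^4),\iota_4]=\pi_7(S^4)/D=Q$.

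Every such pair of components comes from a pinch-fold twist $t_{\tilde\kappa}$ with $\tilde\kappa=j_3a+j_4b$, where $a\in\pi_7(S^3)$ and $b\in\pi_7(S^4)$. Indeed, $\varphi\circ t_{\tilde\kappa}=(\varphi\tilde\kappa)\cdot\varphi$, $\mathrm{pr}_1j_4=*$ and $\mathrm{pr}_2j_3=*$. Hence $\mathrm{pr}_1t_{\tilde\kappa}=a\cdot\mathrm{pr}_1$ and $\mathrm{pr}_2t_{\tilde\kappa}=b\cdot\mathrm{pr}_2$. Each such $t_{\tilde\kappa}$ has trivial Hurewicz image, so it is an equivalence of degree one. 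Since $\tilde\kappa$ lies in $K$, Lemma~\ref{lem:coaction}(b) shows that $(a,b)\mapsto t_{j_3a+j_4b}$ is a homomorphism. Its kernel is exactly $0\oplus D$, by the stabilizer computation. This gives (a): $\ker\pi\cong\mathbb{Z}/2\oplus\mathbb{Z}/24$.

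\emph{Image.} An element of $\operatorname{im}\pi$ acts on $\pi_3=\mathbb{Z}$ by a sign $\epsilon_3$. On $\pi_4=\mathbb{Z}/2\{\eta_3\}\oplus\mathbb{Z}\{\iota_4\}$ it acts by $\eta_3\mapsto\eta_3$, because this is the only automorphism of $\mathbb{Z}/2$. It sends $\iota_4\mapsto\epsilon_4\iota_4+e\eta_3$ with $e\in\mathbb{Z}/2$. The degree is $\epsilon_3\epsilon_4$, so orientation-preserving forces $\epsilon_3=\epsilon_4$. Thus $\operatorname{im}\pi$ embeds in $\{\pm1\}\times\mathbb{Z}/2$, since $\pi(f)$ is determined by these actions.

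The map $s$ realizes $(-1,0)$. The map $\sigma$ is a diffeomorphism with inverse $(x,y)\mapsto(x\cdot\eta_3(y)^{-1},y)$, it has degree one, and it realizes $(1,1)$, since $\sigma j_4=j_3\eta_3+j_4$. This gives (b).

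\emph{Splitting and action.} To split the extension, I would check that $s^2=\id$ and that $\sigma^2\simeq\id$. For the latter, $\mathrm{pr}_1\sigma^2=x\cdot(\eta_3\mathrm{pr}_2)^2$ and $(\eta_3\mathrm{pr}_2)^2=(2\eta_3)\mathrm{pr}_2\simeq*$ in the group $[M,S^3]_\bullet$. I would also check that $s\sigma\simeq\sigma s$. Here $\mathrm{pr}_1(s\sigma)=\overline{\eta_3(y)}\,\bar x$ and $\mathrm{pr}_1(\sigma s)=\bar x\,\eta_3(ry)$. Both have the same restriction to $K$, because $\eta_3\circ r=-\eta_3=\eta_3$ and conjugation is inversion. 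So they differ by at most an element of $\pi_7(S^3)$ via the coaction.

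This commutation is the step I expect to be the main obstacle. If the two composites differ by the nontrivial twist $t_{j_3\nu'\eta_6}$, I would first try replacing $\sigma$ by $\sigma$ composed with a suitable kernel element, and choose it so that $\sigma'^2\simeq\id$ still holds. The group $[M,S^3]_\bullet$ is a central extension, so this becomes a finite cocycle check.

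Finally, the action comes from Lemma~\ref{lem:coaction}(c): conjugation by a diffeomorphism $h$ sends $t_{\tilde\kappa}$ to $t_{h_*\tilde\kappa}$.
\begin{itemize}
\item For $s$: we have $s_*j_4b=j_4(r\circ b)=-j_4b$. Also $s_*j_3a=j_3(\bar{\ }\circ a)=j_3a$, since $a$ has order $2$ and $(-\iota_3)\circ\nu'\eta_6=\nu'\eta_6$. This gives $(a,b)\mapsto(a,-b)$.
\item For $\sigma$: we have $\sigma_*j_4b=j_4b+j_3\eta_3b$ and $\sigma_*j_3a=j_3a$. This gives $(a,b)\mapsto(a+\varepsilon(b),b)$.
\end{itemize}
Counting, the group has order $48\cdot4=192$, which gives (c).
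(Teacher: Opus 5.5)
\textbf{Parts (a) and (b).} These follow the paper's proof. You describe $\ker\pi$ by Lemma~\ref{lem:orbit} and the homomorphism $\tilde\kappa\mapsto t_{\tilde\kappa}$, whose kernel is $0\oplus D$. You obtain the image from the invariants $(e,a)$ realized by $s$ and $\sigma$, and the action from Lemma~\ref{lem:coaction}(c). Two small points. First, $r\circ b=-b$ holds only modulo $D$, which suffices in $Q$. Second, to get $G\cong(\mathbb{Z}/2)^2$ rather than $\mathbb{Z}/4$, you should say that $f\mapsto(e,a)$ is a homomorphism, as the paper checks.

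\textbf{The gap is the splitting in (c).} The commutation $s\sigma\simeq\sigma s$ that you hope to verify is false. Put $X=\mathrm{pr}_1^{-1}$ and $Y=\eta_3\circ\mathrm{pr}_2$ in $[M,S^3]_\bullet$. Since $Y^2=1$ and $\eta_3\circ r\simeq\eta_3$, one has $\mathrm{pr}_1(s\sigma)=Y^{-1}X=YX$ and $\mathrm{pr}_1(\sigma s)=XY$. As in Lemma~\ref{lem:AutN}(a), the commutator of these is $q^*(\langle\iota_3,\iota_3\rangle\circ\eta_6)=q^*(\nu'\eta_6)$. This is non-zero because $q^*$ is injective on $\pi_7(S^3)$. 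So $c_0=[s,\sigma]=(a_0,0)$ with $a_0\neq0$, which is exactly your contingency case.

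Your proposed repair cannot work. For any $k=(a,b)\in\ker\pi$,
\[
[s,\sigma k]=c_0+{}^{\sigma}\bigl({}^{s}k-k\bigr)=c_0+(0,-2b).
\]
Hence the $\pi_7(S^3)$-component of the commutator stays $a_0$ whichever kernel element you compose $\sigma$ with. The elements $(a,0)$ are central, so twisting by $\pi_7(S^3)$ alone never helps either.

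\textbf{What is missing.} You need to correct $s$ instead, using a twist from the $S^4$ factor on which $\sigma$ acts non-trivially. Take $k=t_{j_4\nu_4}=(0,\nu_4)$ and $s'=sk$. Then $s'^2={}^{s}k+k=(0,-\nu_4)+(0,\nu_4)=0$. Moreover
\[
[s',\sigma]={}^{s}\bigl(k-{}^{\sigma}k\bigr)+c_0=(\varepsilon(\nu_4),0)+(a_0,0)=0,
\]
because $\varepsilon(\nu_4)=\eta_3\circ\nu_4=\nu'\eta_6\neq0$. This is the paper's construction. The paper treats both values of $a_0$ without deciding which one occurs. Since $\ker\pi$ is abelian, $s'$ acts on it as $s$ does, so the stated semidirect product follows.
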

\begin{proof}
Write $M=(S^3\vee S^4)\cup_we^7$ with $w=[\iota_3,\iota_4]$, let $j_1,j_2$ be the inclusions of the two spheres, $\mathrm{pr}_1,\mathrm{pr}_2$ the projections, and $q\colon M\to S^7$ the pinch map. A self-map $f$ of $M$ is the pair $(f_1,f_2)=(\mathrm{pr}_1f,\mathrm{pr}_2f)\in[M,S^3]_\bullet\times[M,S^4]_\bullet$.

\emph{Restrictions.} For $f\in\Aut_\bullet(M)$ we have $f_1j_1=e_3\iota_3$, $f_2j_1=0$ (as $\pi_3(S^4)=0$), $f_2j_2=e_4\iota_4$ and $f_1j_2=a\eta_3$ with $e_3,e_4\in\mathbb{Z}$ and $a\in\mathbb{Z}/2$. Since $f$ is a homotopy equivalence, $e_3,e_4\in\{\pm1\}$, and $\deg f=e_3e_4$, so $e_3=e_4=:e$ for $f\in\Aut_\bullet(M)$. The action of $f$ on $\pi_3(M)=\mathbb{Z}$ is multiplication by $e$, and on $\pi_4(M)=\mathbb{Z}/2\oplus\mathbb{Z}$ it sends $(\eta_3,0)\mapsto(\eta_3,0)$ and $(0,\iota_4)\mapsto(a\eta_3,e\iota_4)$. So $\pi(f)$ is determined by $(e,a)$, and $f\in\ker\pi$ if and only if $e=1$ and $a=0$, that is $f|_{S^3\vee S^4}\simeq j_1\vee j_2$.

\emph{The kernel.} Let $f\in\ker\pi$. Then $f_1$ restricts to $(\iota_3,0)$ and $f_2$ to $(0,\iota_4)$ on $S^3\vee S^4$, and these are the restrictions of $\mathrm{pr}_1$ and $\mathrm{pr}_2$. By Lemma~\ref{lem:orbit} (with $Y=S^3$, $g=\iota_3$ in the interchanged form, and $Y=S^4$, $g=\iota_4$) there are $b_1\in\pi_7(S^3)$ and $b_2\in\pi_7(S^4)$ with $f_1=b_1\cdot\mathrm{pr}_1$ and $f_2=b_2\cdot\mathrm{pr}_2$ for the coaction. Put $\tilde\kappa=j_1b_1+j_2b_2\in\pi_7(M)$. Since $\mathrm{pr}_i\circ t_{\tilde\kappa}=(\mathrm{pr}_i\tilde\kappa)\cdot\mathrm{pr}_i$ and $\mathrm{pr}_1\tilde\kappa=b_1$, $\mathrm{pr}_2\tilde\kappa=b_2$, we get $f=t_{\tilde\kappa}$. Conversely every $t_{\tilde\kappa}$ with $\tilde\kappa\in\pi_7(M)=\pi_7(S^3)\oplus\pi_7(S^4)$ lies in $\ker\pi$: it is the identity on $S^3\vee S^4$ and has degree one, because the Hurewicz image of $\tilde\kappa$ in $H_7(M)$ vanishes.

By Lemma~\ref{lem:coaction}(b) (take $K=S^3\vee S^4$) the map $\Psi\colon\pi_7(M)\to\ker\pi$, $\tilde\kappa\mapsto t_{\tilde\kappa}$, is a surjective homomorphism. Its kernel consists of the classes $\tilde\kappa=(b_1,b_2)$ with $b_1\cdot\mathrm{pr}_1\simeq\mathrm{pr}_1$ and $b_2\cdot\mathrm{pr}_2\simeq\mathrm{pr}_2$. By Lemma~\ref{lem:orbit} the stabilizer of $\mathrm{pr}_2$ is $[\pi_4(S^4),\iota_4]=D$, and the stabilizer of $\mathrm{pr}_1$ is $[\pi_5(S^3),\iota_3]=0$, since Whitehead products vanish in the $H$-space $S^3$. Hence $\ker\Psi=0\oplus D$ and $\ker\pi\cong\pi_7(S^3)\oplus Q\cong\mathbb{Z}/2\oplus\mathbb{Z}/24$. This proves (a).

\emph{The image.} The maps $s$ and $\sigma$ are orientation-preserving based diffeomorphisms; $s$ has degree $(-1)(-1)=1$ because both $x\mapsto\bar x$ on $S^3$ and $r$ on $S^4$ have degree $-1$, and $\sigma$ has the inverse $(x,y)\mapsto(x\eta_3(y)^{-1},y)$. They realize $(e,a)=(-1,0)$ and $(1,1)$. The assignment $f\mapsto(e,a)$ is a homomorphism $\Aut_\bullet(M)\to\{\pm1\}\times\mathbb{Z}/2$: if $g$ has invariants $(e',a')$, then $g\circ j_2=j_1(a'\eta_3)+j_2(e'\iota_4)$, so $(f\circ g)_1j_2=f_1j_1(a'\eta_3)+f_1j_2(e'\iota_4)=a'\eta_3+e'a\eta_3$, which is $(a+a')\eta_3$ in $\pi_4(S^3)=\mathbb{Z}/2$, and the signs multiply. Hence $G=\operatorname{im}\pi$ is the subgroup of $\{\pm1\}\times\mathbb{Z}/2$ generated by the invariants of $s$ and $\sigma$, which are $(-1,0)$ and $(1,1)$, and all four pairs occur, so $G\cong(\mathbb{Z}/2)^2$. This proves (b), and $|\Aut_\bullet(M)|=48\cdot4=192$.

\emph{The action and the splitting.} Write elements of $K:=\ker\pi$ as $(a,b)$, with $a\in\pi_7(S^3)=\mathbb{Z}/2$ and $b\in Q$, meaning $t_{\tilde\kappa}$ for $\tilde\kappa=j_1a+j_2b$. By Lemma~\ref{lem:coaction}(c), for a diffeomorphism $h$ of degree one, $h\,t_{\tilde\kappa}\,h^{-1}=t_{h_*\tilde\kappa}$. For $s$: $s\circ j_1a=j_1(\bar{\ }\circ a)$ and $\bar{\ }\circ a=-a=a$, since inversion in $S^3$ acts on homotopy groups by $-1$ and $\pi_7(S^3)=\mathbb{Z}/2$; and $s\circ j_2b=j_2(r\circ b)$ with $r\circ b=(-\iota_4)\circ b=-b+[\iota_4,\iota_4]\circ H(b)\equiv-b$ modulo $D$. So $s_*(a,b)=(a,-b)$. For $\sigma$: $\sigma\circ j_1a=j_1a$ and $\sigma\circ j_2b=j_1(\eta_3\circ b)+j_2b$, because $\sigma(*,y)=(\eta_3(y),y)$. So $\sigma_*(a,b)=(a+\varepsilon(b),b)$.

Now $s^2=\id$, and $\sigma^2(x,y)=(x\eta_3(y)^2,y)\simeq\id$, since $\eta_3\cdot\eta_3=2\eta_3=0$ in $\pi_4(S^3)$. The commutator $c_0=s\sigma s^{-1}\sigma^{-1}$ lies in $\ker\pi$, because $G$ is abelian, and $\mathrm{pr}_2\circ c_0=\mathrm{pr}_2$ strictly (the second coordinate is reflected twice and shifted trivially), so the $Q$-component of $c_0$ is zero by Lemma~\ref{lem:orbit}. Thus $c_0=(a_0,0)$. Put $k=\id$ if $a_0=0$ and $k=(0,\nu_4)\in K$ if $a_0\ne0$, and $s'=sk$. Then $s'^2=s\,k\,s\,k=(s_*k)\,k=(0,-\nu_4)(0,\nu_4)=1$ or $s'^2=1$ trivially. Moreover
\[
[s',\sigma]=s\,(k\sigma k^{-1}\sigma^{-1})\,s^{-1}\cdot c_0=s_*\bigl((0,\nu_4)-\sigma_*(0,\nu_4)\bigr)+c_0=(\varepsilon(\nu_4),0)+(a_0,0)=0,
\]
using $\varepsilon(\nu_4)\ne0$ and $|\mathbb{Z}/2|=2$. So $s'$ and $\sigma$ are commuting involutions, and $\langle s',\sigma\rangle\cong(\mathbb{Z}/2)^2$ maps isomorphically onto $G$. This is a splitting, and the action of $G$ on $K$ is the one stated. This proves (c).
\end{proof}

\begin{remark}
Since $\gamma$ maps $\widehat\Omega^{BString}_8(P,M)$ onto $\ker\pi$ (Corollary~\ref{im}), the relative string bordism group has at least $48$ elements.
\end{remark}

\begin{remark}
The order $192$ can be compared with Pave\v{s}i\'c's description of $\mathcal{E}(S^m\times S^n)$ for $m<n$ \cite[Proposition~5.1]{pavesic}, namely $\mathcal{E}(S^m\times S^n)=(\pi_n(\mathrm{aut}_1S^m)\rtimes\mathbb{Z}/2)\cdot(\pi_m(\mathrm{aut}_1S^n)\rtimes\mathbb{Z}/2)$. Here $\mathcal{E}(X)$ is the group of homotopy classes of self-equivalences of $X$ and $\mathrm{aut}_1X$ is the component of the identity in the space of self-maps of $X$. The formula applies when $m\ne n$, which is the case in which self-equivalences of $S^m\times S^n$ can be diagonalized in Pave\v{s}i\'c's sense \cite[Corollary~2.2]{pavesic}, namely when the components $S^m\to S^m$ and $S^n\to S^n$ of every self-equivalence are again self-equivalences. The groups $\pi_k(\mathrm{aut}_1S^m)$ are computed by the exact sequence of Koh
\[
\pi_{k+1}(S^m)\xrightarrow{[\iota_m,-]}\pi_{k+m}(S^m)\to\pi_k(\mathrm{aut}_1S^m)\to\pi_k(S^m)\xrightarrow{[\iota_m,-]}\pi_{k+m-1}(S^m).
\]
Since $S^3$ is a group, $\mathrm{aut}_1S^3\simeq S^3\times\Omega^3_0S^3$, so $\pi_4(\mathrm{aut}_1S^3)\cong\pi_4(S^3)\oplus\pi_7(S^3)$ has order $4$. Koh's sequence gives $\pi_3(\mathrm{aut}_1S^4)\cong\pi_7(S^4)/[\iota_4,\iota_4]\cong\mathbb{Z}/24$. Hence $|\mathcal{E}(S^3\times S^4)|=(4\cdot2)(24\cdot2)=384$, and the subgroup of orientation-preserving classes has order $192$, in agreement with Theorem~\ref{thm:S3S4}.
\end{remark}

\subsection{The case $M=S^4\times S^4$}
\label{subsec:S4xS4}

Let $M=S^4\times S^4$. It is parallelizable, $m=8=2n$ with $n=4$, so $\B=BO\langle5\rangle=BString$. Since $\pi_4(M)\cong\mathbb{Z}^2$ and $\pi_i(M)=0$ for $i<4$, we have $P=K(\mathbb{Z}^2,4)$ and $\Aut_\bullet(P)=GL_2(\mathbb{Z})$. Hypothesis (T) holds because $M$ is simply connected, so Theorem~\ref{thm:braid_exists} applies.

In the basis of $H^4(M;\mathbb{Z})$ dual to the two factors the intersection form is symmetric,
\[
s_M=\begin{pmatrix}0&1\\1&0\end{pmatrix},
\]
since the cup product on $H^4$ is graded commutative with sign $(-1)^{4\cdot4}=+1$. Writing $A=\left(\begin{smallmatrix}a&b\\c&d\end{smallmatrix}\right)$, the condition $A^Ts_MA=s_M$ becomes $2ac=0$, $2bd=0$, $ad+bc=1$. If $a\ne0$ then $c=0$, so $ad=1$ and $a=d=\pm1$, and then $b=0$. If $a=0$ then $bc=1$, so $b=c=\pm1$ and $d=0$. Hence
\[
\Isom[\mathbb{Z}^2,s_M]=\{\pm I,\pm\sigma_0\}\cong(\mathbb{Z}/2)^2,\qquad\sigma_0=\begin{pmatrix}0&1\\1&0\end{pmatrix}.
\]
In contrast with $S^3\times S^3$, no isometry is excluded by orientation: the swap of the two factors has degree $(-1)^{4\cdot4}=+1$.

From Toda \cite[Chapter~V]{toda} we use $\pi_5(S^4)=\mathbb{Z}/2\{\eta_4\}$ and $\pi_8(S^4)=\mathbb{Z}/2\{\nu_4\eta_7\}\oplus\mathbb{Z}/2\{E\nu'\eta_7\}$, together with $[\iota_4,\iota_4]=\pm(2\nu_4-E\nu')$. Since $\eta_4=E\eta_3$ is a suspension, $[\iota_4,\eta_4]=[\iota_4,\iota_4]\circ\eta_7=\pm(2\nu_4\eta_7-E\nu'\eta_7)=\pm E\nu'\eta_7$, because $\nu_4\eta_7$ has order $2$. So
\[
D_8:=[\pi_5(S^4),\iota_4]=\langle E\nu'\eta_7\rangle,\qquad R:=\pi_8(S^4)/D_8\cong\mathbb{Z}/2\{\nu_4\eta_7\}.
\]

\begin{theorem}\label{thm:S4S4}
For $M=S^4\times S^4$:
\begin{enumerate}
\item[(a)] The image of $\pi\colon\Aut_\bullet(M)\to GL_2(\mathbb{Z})$ is $\{\pm I,\pm\sigma_0\}\cong(\mathbb{Z}/2)^2$.
\item[(b)] The kernel of $\pi$ is the group of pinch-fold twists, $\ker\pi\cong R\oplus R\cong(\mathbb{Z}/2)^2$, generated by $t_{j_1\nu_4\eta_7}$ and $t_{j_2\nu_4\eta_7}$.
\item[(c)] The extension splits, with section generated by the diffeomorphisms $\rho=r\times r$ and $\mathrm{sw}(x,y)=(y,x)$, where $r$ is a reflection of $S^4$. The element $\mathrm{sw}$ interchanges the two generators of $\ker\pi$, and $\rho$ acts trivially. Hence
\[
\Aut_\bullet(S^4\times S^4)\cong\bigl((\mathbb{Z}/2)^2\rtimes\mathbb{Z}/2\bigr)\times\mathbb{Z}/2\cong D_4\times\mathbb{Z}/2,
\]
a group of order $16$.
\end{enumerate}
\end{theorem}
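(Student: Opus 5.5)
The plan follows the proof of Theorem~\ref{thm:S3S4}. Write $M=(S^4\vee S^4)\cup_w e^8$ with $w=[j_1,j_2]$, and describe a self-map $f$ by its pair $(f_1,f_2)=(\mathrm{pr}_1f,\mathrm{pr}_2f)\in[M,S^4]_\bullet^2$.

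\emph{Image.} Every $f\in\Aut_\bullet(M)$ induces an isometry of $s_M$, so $\pi(f)\in\{\pm I,\pm\sigma_0\}$ by the computation preceding the theorem. Conversely, $\rho=r\times r$ has degree $(-1)^2=1$ and induces $-I$. The swap $\mathrm{sw}$ has degree $+1$ and induces $\sigma_0$, and $\rho\circ\mathrm{sw}$ induces $-\sigma_0$. Since $P=K(\mathbb{Z}^2,4)$, $\pi(f)$ is just the action on $\pi_4$, and this proves (a).

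\emph{Kernel.} Let $f\in\ker\pi$. Then $f_1j_1=\iota_4$ and $f_1j_2=0$, and similarly for $f_2$, so on $S^4\vee S^4$ each $f_i$ agrees with $\mathrm{pr}_i$. Lemma~\ref{lem:orbit} with $Y=S^4$, $p=q=4$, $g=\iota_4$ gives $b_i\in\pi_8(S^4)$ with $f_i=b_i\cdot\mathrm{pr}_i$, and the stabilizer of $\mathrm{pr}_i$ is $[\pi_5(S^4),\iota_4]=D_8$. Put $\tilde\kappa=j_1b_1+j_2b_2$. Then $\mathrm{pr}_i t_{\tilde\kappa}=(\mathrm{pr}_i\tilde\kappa)\cdot\mathrm{pr}_i$ shows $f=t_{\tilde\kappa}$. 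However, $\pi_8(M)$ also contains the Whitehead-product summand $\pi_8(S^7)\cdot w$-type classes, such as $[j_1,j_2]\circ\eta_7$. This is the one point needing care: $\mathrm{pr}_i$ kills them, so twists by them have the same pair $(f_1,f_2)$ as the identity and are therefore trivial. So it suffices to use $\tilde\kappa\in j_{1*}\pi_8(S^4)\oplus j_{2*}\pi_8(S^4)$, which lie in $K=S^4\vee S^4$. Lemma~\ref{lem:coaction}(b) makes $(b_1,b_2)\mapsto t_{j_1b_1+j_2b_2}$ a homomorphism, and Lemma~\ref{lem:orbit} identifies its kernel with $D_8\oplus D_8$. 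Hence $\ker\pi\cong R\oplus R$, generated by $t_{j_i\nu_4\eta_7}$; the Hurewicz image vanishes, so these have degree one. This proves (b).

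\emph{Action and splitting.} By Lemma~\ref{lem:coaction}(c), $\mathrm{sw}\,t_{j_1b}\,\mathrm{sw}^{-1}=t_{j_2b}$, so $\mathrm{sw}$ swaps the generators. Similarly $\rho\,t_{j_ib}\,\rho^{-1}=t_{j_i(r\circ b)}$ with $r\circ b=-b+[\iota_4,\iota_4]\circ H(b)\equiv -b=b$ modulo $D_8$, because $R$ has exponent $2$. So $\rho$ acts trivially. For the splitting, $\rho$ and $\mathrm{sw}$ are honest commuting involutions of $S^4\times S^4$, since $(r\times r)\circ\mathrm{sw}=\mathrm{sw}\circ(r\times r)$ strictly. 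They therefore generate a subgroup $(\mathbb{Z}/2)^2$ mapping isomorphically onto $\operatorname{im}\pi$, and this subgroup is a section.

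The group is then $(R^2\rtimes\langle\mathrm{sw}\rangle)\times\langle\rho\rangle$. The factor $\rho$ splits off because it acts trivially on $R^2$ and commutes with $\mathrm{sw}$. Finally, $(\mathbb{Z}/2)^2\rtimes\mathbb{Z}/2$ with the swap action is $\mathbb{Z}/2\wr\mathbb{Z}/2\cong D_4$, so the group has order $16$. The main obstacle I expect is the bookkeeping in the kernel step: verifying that the Whitehead-product part of $\pi_8(M)$ gives no extra twists, and applying Lemma~\ref{lem:orbit} for $p=q$ correctly.
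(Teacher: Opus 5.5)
Your proposal is correct and follows essentially the same route as the paper: you realize the isometries of $s_M$ by $\id,\rho,\mathrm{sw},\rho\circ\mathrm{sw}$, use Lemma~\ref{lem:orbit} and Lemma~\ref{lem:coaction}(b) to get $\ker\pi\cong\pi_8(S^4)^2/(D_8\oplus D_8)$, and use Lemma~\ref{lem:coaction}(c) with $(-\iota_4)\circ b=-b+[\iota_4,\iota_4]\circ H(b)$ for the action. The point you flag as needing care is vacuous: the classes $[j_1,j_2]\circ\eta_7$ live in $\pi_8(S^4\vee S^4)$, whereas in the product $[j_1,j_2]=0$ and $(\mathrm{pr}_1,\mathrm{pr}_2)$ identifies $\pi_8(M)$ with $\pi_8(S^4)^2$, so every class is already of the form $j_1b_1+j_2b_2$ and there is nothing extra to rule out.
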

\begin{proof}
\emph{(a)} Every $f\in\Aut_\bullet(M)$ of degree one preserves the cup product pairing, so $\pi(f)\in\Isom[\mathbb{Z}^2,s_M]$. Conversely, write $M=(S^4\vee S^4)\cup_we^8$ with $w=[\iota_1,\iota_2]$. The Whitehead product is bilinear on $\pi_4\times\pi_4$ and satisfies $[\iota_2,\iota_1]=(-1)^{4\cdot4}[\iota_1,\iota_2]=[\iota_1,\iota_2]$. For $A\in\{\pm I,\pm\sigma_0\}$ let $g_A\colon S^4\vee S^4\to S^4\vee S^4$ send $(\iota_1,\iota_2)$ to $(\iota_1,\iota_2)$, $(-\iota_1,-\iota_2)$, $(\iota_2,\iota_1)$ or $(-\iota_2,-\iota_1)$ respectively. In each case bilinearity and symmetry give $g_A\circ w=[g_A\iota_1,g_A\iota_2]=[\iota_1,\iota_2]=w$, so $g_A$ extends to a map $f_A\colon M\to M$. It acts by $A$ on $H^4$ and by $ad+bc=1$ on $H^8$, since $f_A^*(a_1a_2)=f_A^*(a_1)f_A^*(a_2)$ and $a_i^2=0$. Thus $f_A$ is a homology isomorphism of simply connected spaces, hence a based homotopy equivalence of degree one with $\pi(f_A)=A$. (For $\pm I$ and $\pm\sigma_0$ we may take the diffeomorphisms $\id$, $\rho$, $\mathrm{sw}$ and $\rho\circ\mathrm{sw}$.)

\emph{(b)} Let $f\in\ker\pi$ and write $f=(f_1,f_2)$. Then $f$ restricts to the inclusion of $S^4\vee S^4$, so $f_1|_{S^4\vee S^4}=(\iota_4,0)$ and $f_2|_{S^4\vee S^4}=(0,\iota_4)$, which are the restrictions of $\mathrm{pr}_1$ and $\mathrm{pr}_2$. By Lemma~\ref{lem:orbit}, applied to $Y=S^4$ and $g=\iota_4$ (once with $\varphi_0=g\circ\mathrm{pr}_1$ and once with $\varphi_0=g\circ\mathrm{pr}_2$), there are $b_1,b_2\in\pi_8(S^4)$ with $f_i=b_i\cdot\mathrm{pr}_i$, and both stabilizers are $[\pi_5(S^4),\iota_4]=D_8$. With $\tilde\kappa=j_1b_1+j_2b_2$ we get $f=t_{\tilde\kappa}$, as in the proof of Theorem~\ref{thm:S3S4}. Conversely each $t_{\tilde\kappa}$ lies in $\ker\pi$. So $\Psi\colon\pi_8(M)=\pi_8(S^4)^2\to\ker\pi$ is a surjective homomorphism with kernel $D_8\oplus D_8$, and $\ker\pi\cong R\oplus R$.

\emph{(c)} The diffeomorphisms $\rho$ and $\mathrm{sw}$ have degree $+1$, commute, have order $2$, and satisfy $\pi(\rho)=-I$, $\pi(\mathrm{sw})=\sigma_0$. So they generate a subgroup mapping isomorphically onto $\operatorname{im}\pi$. By Lemma~\ref{lem:coaction}(c), $\mathrm{sw}\,t_{j_1b}\,\mathrm{sw}^{-1}=t_{j_2b}$ and $\rho\,t_{j_ib}\,\rho^{-1}=t_{j_i(r\circ b)}$, and for $b=\nu_4\eta_7$ we have $r\circ b=(-\iota_4)\circ\nu_4\circ\eta_7=-\nu_4\eta_7\pm[\iota_4,\iota_4]\eta_7\equiv\nu_4\eta_7$ modulo $D_8$, since $\nu_4\eta_7$ has order $2$. Thus $\ker\pi\rtimes\langle\rho,\mathrm{sw}\rangle$ is the stated group: $\mathrm{sw}$ swaps the two summands of $R\oplus R$ and $\rho$ acts trivially. The group $(\mathbb{Z}/2)^2\rtimes\mathbb{Z}/2$ with the swap action is $\mathbb{Z}/2\wr\mathbb{Z}/2\cong D_4$.
\end{proof}

\begin{remark}
By Corollary~\ref{im}, $\gamma$ maps $\widehat\Omega^{BString}_9(P,M)$ onto $\ker\pi\cong(\mathbb{Z}/2)^2$, so this relative bordism group has at least four elements.
\end{remark}

\subsection{The products $S^p\times S^p$}
\label{subsec:SpSp}

The arguments for $S^3\times S^3$ and $S^4\times S^4$ extend to every product of two spheres of the same dimension. Let $p\ge2$ and $M=S^p\times S^p$, so $m=2p=2n$ with $n=p$. The manifold $M$ is stably parallelizable, hence a $\B$-manifold for $\B=BO\langle p+1\rangle$, it is simply connected, and $P=K(\mathbb{Z}^2,p)$ with $\Aut_\bullet(P)=GL_2(\mathbb{Z})$. Thus Theorem~\ref{thm:braid_exists} applies. Let $\iota_1,\iota_2\in\pi_p(S^p\vee S^p)$ be the two inclusions, $w=[\iota_1,\iota_2]$, and let $s_M$ be the intersection form on $H^p(M;\mathbb{Z})\cong\mathbb{Z}^2$ in the basis dual to the factors. It is $\left(\begin{smallmatrix}0&1\\1&0\end{smallmatrix}\right)$ for $p$ even and $\left(\begin{smallmatrix}0&1\\-1&0\end{smallmatrix}\right)$ for $p$ odd, so
\[
\Isom[\mathbb{Z}^2,s_M]=\{\pm I,\pm\sigma_0\}\ (p\text{ even}),\qquad \Isom[\mathbb{Z}^2,s_M]=SL_2(\mathbb{Z})\ (p\text{ odd}),
\]
the first by the computation in Section~\ref{subsec:S4xS4}. We write $[\iota_p,\iota_p]\in\pi_{2p-1}(S^p)$ for the Whitehead square. Three classical facts about it are used. For $p$ even it has infinite order, since its Hopf invariant is $\pm2$. For $p$ odd we have $2[\iota_p,\iota_p]=0$, because $[\alpha,\beta]=(-1)^{pq}[\beta,\alpha]$. Finally, by Adams \cite{adams} it vanishes if and only if $p\in\{1,3,7\}$.

\begin{theorem}\label{thm:SpSp}
Let $p\ge2$ and $M=S^p\times S^p$.
\begin{enumerate}
\item[(a)] The kernel of $\pi\colon\Aut_\bullet(M)\to GL_2(\mathbb{Z})$ is the group of pinch-fold twists, and
\[
\ker\pi\cong Q_p\oplus Q_p,\qquad Q_p:=\pi_{2p}(S^p)\big/[\pi_{p+1}(S^p),\iota_p].
\]
\item[(b)] The image of $\pi$ consists of the isometries $A=\left(\begin{smallmatrix}a&b\\c&d\end{smallmatrix}\right)$ of $s_M$ with $ab\,[\iota_p,\iota_p]=0=cd\,[\iota_p,\iota_p]$. Explicitly, it is
\begin{itemize}
\item $\{\pm I,\pm\sigma_0\}$ for $p$ even,
\item $SL_2(\mathbb{Z})$ for $p=3,7$,
\item the theta group $\Gamma_\theta=\{A\in SL_2(\mathbb{Z})\mid A\equiv I\text{ or }\sigma_0\bmod 2\}$ for $p$ odd and $p\ne1,3,7$.
\end{itemize}
\end{enumerate}
\end{theorem}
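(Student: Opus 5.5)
Part (a) follows the pattern of the proofs of Theorems~\ref{thm:S3S4} and~\ref{thm:S4S4}. Write $M=(S^p\vee S^p)\cup_we^{2p}$ and a self-map as $f=(f_1,f_2)$ with $f_i=\mathrm{pr}_if$. If $f\in\ker\pi$, then $f$ induces the identity on $\pi_p$, so on $S^p\vee S^p$ it is homotopic to the inclusion. Hence $f_i|_{S^p\vee S^p}$ agrees with $\mathrm{pr}_i|_{S^p\vee S^p}$. Lemma~\ref{lem:orbit} with $Y=S^p$ and $g=\iota_p$, in both orientations, gives $b_i\in\pi_{2p}(S^p)$ with $f_i=b_i\cdot\mathrm{pr}_i$. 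The stabilizer in each case is $[\pi_{p+1}(S^p),\iota_p]$. Put $\tilde\kappa=j_1b_1+j_2b_2$. Then $\mathrm{pr}_i\circ t_{\tilde\kappa}=(\mathrm{pr}_i\tilde\kappa)\cdot\mathrm{pr}_i$ shows $f=t_{\tilde\kappa}$. Conversely every such twist is the identity on the $p$-skeleton and has degree one, so it lies in $\ker\pi$. By Lemma~\ref{lem:coaction}(b), $\tilde\kappa\mapsto t_{\tilde\kappa}$ is a surjective homomorphism $\pi_{2p}(S^p)^2\to\ker\pi$. Its kernel is the product of the two stabilizers, which gives $Q_p\oplus Q_p$.

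For (b), the image lies in $\Isom[\mathbb{Z}^2,s_M]$ as before. Given an isometry $A$, define $g_A$ on $S^p\vee S^p$ by $\iota_1\mapsto a\iota_1+c\iota_2$ and $\iota_2\mapsto b\iota_1+d\iota_2$, with columns as images. The map $g_A$ extends over $M$ if and only if $g_A\circ w=0$ in $\pi_{2p-1}(S^p\vee S^p)$. By the Hilton--Milnor theorem, in this degree
\[
\pi_{2p-1}(S^p\vee S^p)\cong\pi_{2p-1}(S^p)\oplus\pi_{2p-1}(S^p)\oplus\mathbb{Z}[\iota_1,\iota_2].
\]
Bilinearity of the Whitehead product on $\pi_p$ gives
\[
[a\iota_1+c\iota_2,\,b\iota_1+d\iota_2]=ab[\iota_1,\iota_1]+cd[\iota_2,\iota_2]+\bigl(ad+(-1)^{p}bc\bigr)[\iota_1,\iota_2].
\]
For an isometry the cross coefficient is $1$, since it equals $ad-bc=1$ for $p$ odd and $ad+bc=1$ for $p$ even. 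So $g_A\circ w-w=ab[\iota_1,\iota_1]+cd[\iota_2,\iota_2]$, whose components in the Hilton--Milnor splitting are $ab[\iota_p,\iota_p]$ and $cd[\iota_p,\iota_p]$. Extension of $g_A$ requires $g_A\circ w=0$ rather than $g_A\circ w=w$, but we compare with $w$ itself: $g_A$ extends to a self-map of $M=(S^p\vee S^p)\cup_we^{2p}$ compatibly with the cell exactly when $g_A\circ w=w$. Any extension is a homology isomorphism of degree $\pm$, computed from $A$ on the cup square, hence a homotopy equivalence of degree one with $\pi(f)=A$.

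Necessity follows because any $f$ with $\pi(f)=A$ restricts to something homotopic to $g_A$ on the $p$-skeleton. Since the $p$-skeleton is all of $S^p\vee S^p$ and $P=K(\mathbb{Z}^2,p)$ sees only $\pi_p$, this restriction is exactly $g_A$. The composite $S^{2p-1}\xrightarrow{w}S^p\vee S^p\xrightarrow{g_A}S^p\vee S^p\to M$ is then null, because $f$ extends over the top cell. The main obstacle is turning this into $g_A\circ w=w$. For that I would use that the fibre of $\pi_{2p-1}(S^p\vee S^p)\to\pi_{2p-1}(M)$ over $0$ is the image of the boundary map from $\pi_{2p}(M,S^p\vee S^p)$, which in this range is generated by $w$ and its compositions with $\pi_{2p-1}(S^{2p-1})$. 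This gives $g_A\circ w=kw$; comparing the $[\iota_1,\iota_2]$ coefficient forces $k=1$. Hence $ab[\iota_p,\iota_p]=cd[\iota_p,\iota_p]=0$.

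The explicit list then follows case by case.
\begin{itemize}
\item For $p$ even, $[\iota_p,\iota_p]$ has infinite order, and the four isometries $\pm I,\pm\sigma_0$ all have $ab=cd=0$.
\item For $p=3,7$ the Whitehead square vanishes, so every element of $SL_2(\mathbb{Z})$ occurs.
\item For other odd $p$, $[\iota_p,\iota_p]$ has order exactly $2$ by Adams \cite{adams}, so the condition is that $ab$ and $cd$ are even. With $ad-bc=1$ this means $A\equiv I$ or $\sigma_0\pmod 2$, which is $\Gamma_\theta$.
\end{itemize}
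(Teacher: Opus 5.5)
Your proposal is correct and follows the paper's proof essentially step for step. Part (a) uses Lemma~\ref{lem:orbit} and pinch-fold twists. Part (b) uses bilinearity of the Whitehead product, the splitting of $\pi_{2p-1}(S^p\vee S^p)$, and the fact that $\ker\bigl(\pi_{2p-1}(S^p\vee S^p)\to\pi_{2p-1}(M)\bigr)=\mathbb{Z}w$. The paper justifies that last fact by Blakers--Massey, $\pi_{2p}(M,S^p\vee S^p)\cong\mathbb{Z}$, and forces $k=1$ via $\deg f=1$ rather than via the $[\iota_1,\iota_2]$-coefficient; the two are equivalent.

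One correction: delete the stray claim that $g_A$ extends iff $g_A\circ w=0$ in $\pi_{2p-1}(S^p\vee S^p)$. The correct criterion is that the image in $\pi_{2p-1}(M)$ vanishes, and that is what your necessity argument actually uses.
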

\begin{proof}
\emph{(a)} The argument of Theorem~\ref{thm:S4S4}(b) applies verbatim, with $S^4$ replaced by $S^p$ and $\pi_8$ by $\pi_{2p}$. Let $f\in\ker\pi$ and $f=(f_1,f_2)$. Since $f$ induces the identity on $\pi_p(M)=\mathbb{Z}^2$ and a map out of $S^p\vee S^p$ is determined by its values on the two spheres, $f$ restricts to the inclusion on $S^p\vee S^p$, so $f_i$ and $\mathrm{pr}_i$ agree there. By Lemma~\ref{lem:orbit} with $Y=S^p$ and $g=\iota_p$ we have $f_i=b_i\cdot\mathrm{pr}_i$ for some $b_i\in\pi_{2p}(S^p)$, and the stabilizer of $\mathrm{pr}_i$ is $[\pi_{p+1}(S^p),\iota_p]$. Put $\tilde\kappa=j_1b_1+j_2b_2\in\pi_{2p}(M)$, where $j_i$ are the inclusions of the factors. As in the proof of Theorem~\ref{thm:S3S4}, $f=t_{\tilde\kappa}$. Conversely, a twist $t_{\tilde\kappa}$ with $\tilde\kappa\in\pi_{2p}(M)$ restricts to the inclusion on $S^p\vee S^p$, has degree one because the Hurewicz image of $\tilde\kappa$ vanishes ($2p>p$), and is therefore a homotopy equivalence by Section~\ref{subsec:twists}. It lies in $\ker\pi$. The map $\Psi\colon\pi_{2p}(M)\to\ker\pi$, $\tilde\kappa\mapsto t_{\tilde\kappa}$, is a homomorphism by Lemma~\ref{lem:coaction}(b), because every class in $\pi_{2p}(M)=\pi_{2p}(S^p)^2$ is represented by a map into $S^p\vee S^p$. It is surjective, and $\Psi(\tilde\kappa)=\id$ if and only if both components $b_i$ lie in the stabilizer. Hence $\ker\Psi=[\pi_{p+1}(S^p),\iota_p]^{\oplus2}$ and $\ker\pi\cong Q_p\oplus Q_p$.

\emph{(b)} Let $f\in\Aut_\bullet(M)$. As $f$ has degree one, $\pi(f)=A$ is an isometry of $s_M$. We may take $f$ cellular. Write $K=S^p\vee S^p$ and $M=K\cup_we^{2p}$. Then $f|_K=g_A$, where $g_A$ sends $\iota_1\mapsto a\iota_1+c\iota_2$ and $\iota_2\mapsto b\iota_1+d\iota_2$. The pair $(M,K)$ is $(2p-1)$-connected and $K$ is $(p-1)$-connected, so by the Blakers--Massey theorem $\pi_{2p}(M,K)\to\pi_{2p}(M/K)=\mathbb{Z}$ is an isomorphism, as $2p\le 3p-2$. Hence the kernel of $\pi_{2p-1}(K)\to\pi_{2p-1}(M)$ is $\mathbb{Z}w$. Since $f$ is cellular, $g_A\circ w\in\mathbb{Z}w$, say $g_A\circ w=kw$, and $k$ is the degree of $f$, so $k=1$. By bilinearity of the Whitehead product on $\pi_p\times\pi_p$,
\[
g_A\circ w=[a\iota_1+c\iota_2,\,b\iota_1+d\iota_2]=ab\,[\iota_1,\iota_1]+ad\,[\iota_1,\iota_2]+cb\,[\iota_2,\iota_1]+cd\,[\iota_2,\iota_2].
\]
Now $[\iota_2,\iota_1]=(-1)^{p}[\iota_1,\iota_2]$, and $\pi_{2p-1}(K)=\pi_{2p-1}(S^p)\oplus\pi_{2p-1}(S^p)\oplus\mathbb{Z}\,w$ with $[\iota_i,\iota_i]$ the Whitehead square in the $i$-th summand. Comparing components, $g_A\circ w=w$ holds if and only if $ab[\iota_p,\iota_p]=0$, $cd[\iota_p,\iota_p]=0$ and $ad+(-1)^pbc=1$. The last condition says that $f$ has degree one. Conversely, if these three conditions hold then $g_A$ extends to a map $f_A\colon M\to M$, which acts by $A$ on $H^p$ and by $ad\pm bc=1$ on $H^{2p}$, because $x_i^2=0$ in $H^*(M)$. So $f_A$ is a homology isomorphism of simply connected spaces, hence a based homotopy equivalence of degree one with $\pi(f_A)=A$. This proves the description of the image as a set of matrices.

For $p$ even $[\iota_p,\iota_p]$ has infinite order, so the conditions say $ab=cd=0$ and $ad+bc=1$. If $a=0$ then $bc=1$ and $d=0$, and if $b=0$ then $ad=1$ and $c=0$. This gives $\{\pm I,\pm\sigma_0\}$, and these are isometries of $s_M$. For $p=3,7$ the Whitehead square vanishes and the image is all of $SL_2(\mathbb{Z})$. For the remaining odd $p$ the Whitehead square is non-zero of order $2$, and the conditions say that $ab$ and $cd$ are even. An integer matrix of determinant $1$ whose rows $(a,b)$ and $(c,d)$ both have even product is, modulo $2$, one of $I$ and $\sigma_0$. This is the theta group $\Gamma_\theta$.
\end{proof}

\begin{corollary}\label{cor:S5S5S7S7}
\begin{enumerate}
\item[(a)] $\Aut_\bullet(S^5\times S^5)\cong\Gamma_\theta$. In particular $\pi$ is injective.
\item[(b)] For $M=S^7\times S^7$ there is an exact sequence
\[
0\to(\mathbb{Z}/120)^2\to\Aut_\bullet(S^7\times S^7)\xrightarrow{\ \pi\ }SL_2(\mathbb{Z})\to1.
\]
\end{enumerate}
\end{corollary}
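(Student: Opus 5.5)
Both parts follow from Theorem~\ref{thm:SpSp}; the only work is computing $Q_p$ for $p=5$ and $p=7$ from Toda's tables.

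\textbf{Part (a).} Take $p=5$. Part (b) of Theorem~\ref{thm:SpSp} gives $\operatorname{im}\pi=\Gamma_\theta$, because $5\notin\{1,3,7\}$. For the kernel we need $Q_5=\pi_{10}(S^5)/[\pi_6(S^5),\iota_5]$. By Toda \cite{toda}, $\pi_{10}(S^5)\cong\mathbb{Z}/2$, generated by $\nu_5\eta_8$, and $\pi_6(S^5)\cong\mathbb{Z}/2$, generated by $\eta_5$. Since $\eta_5=E\eta_4$ is a suspension, $[\iota_5,\eta_5]=[\iota_5,\iota_5]\circ\eta_9$. Toda gives $[\iota_5,\iota_5]=\nu_5\eta_8$. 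So $[\iota_5,\eta_5]=\nu_5\eta_8\eta_9=\nu_5\eta_8^2$, which lies in $\pi_{11}(S^5)$, not in $\pi_{10}(S^5)$.

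That degree check exposes a slip in the setup of Lemma~\ref{lem:orbit}. The relevant stabilizer is $[\pi_{p+1}(Y),g]$ with $g\in\pi_p$, and this lands in $\pi_{2p}$. Here that is $[\eta_5,\iota_5]\in\pi_{10}(S^5)$, which is correct; the trouble was only in writing it as a composite. The right identity is $[\iota_5,E\alpha]=[\iota_5,\iota_5]\circ E^{5}\alpha$, so $[\iota_5,\eta_5]=[\iota_5,\iota_5]\circ\eta_9$ with $[\iota_5,\iota_5]\in\pi_9(S^5)$, and the composite lies in $\pi_{10}(S^5)$. From Toda, $[\iota_5,\iota_5]=\nu_5\eta_8$, and then $\nu_5\eta_8\eta_9=\nu_5\eta_8^2$, which generates $\pi_{10}(S^5)\cong\mathbb{Z}/2$. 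Hence the stabilizer is all of $\pi_{10}(S^5)$ and $Q_5=0$. Theorem~\ref{thm:SpSp}(a) then gives $\ker\pi=0$, so $\pi$ maps $\Aut_\bullet(S^5\times S^5)$ isomorphically onto $\Gamma_\theta$.

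\textbf{Part (b).} Take $p=7$. Here $[\iota_7,\iota_7]=0$ by Adams \cite{adams}, so the identity $[\iota_7,E\alpha]=[\iota_7,\iota_7]\circ E^7\alpha$ shows that all of $[\pi_8(S^7),\iota_7]$ vanishes. Therefore $Q_7=\pi_{14}(S^7)$, which Toda gives as $\mathbb{Z}/120$. Theorem~\ref{thm:SpSp}(b) gives image $SL_2(\mathbb{Z})$, and part (a) gives kernel $(\mathbb{Z}/120)^2$. This is exactly the stated exact sequence.

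\textbf{Main obstacle.} The critical step is the Toda input for $p=5$: we need $[\iota_5,\iota_5]\circ\eta_9\neq0$ in $\pi_{10}(S^5)$, i.e.\ that $\nu_5\eta_8^2$ is nonzero. I would verify this directly from Toda's table for $\pi_{n+5}(S^n)$, which lists $\pi_{10}(S^5)=\mathbb{Z}/2\{\nu_5\eta_8^2\}$.
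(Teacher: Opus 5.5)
Your final argument is correct and is essentially the paper's proof: since $\eta_5$ is a suspension, $[\iota_5,\eta_5]=[\iota_5,\iota_5]\circ\eta_9=\nu_5\eta_8^2$ generates $\pi_{10}(S^5)$, so $Q_5=0$; and $[\pi_8(S^7),\iota_7]=0$ gives $Q_7=\pi_{14}(S^7)\cong\mathbb{Z}/120$, with Theorem~\ref{thm:SpSp} supplying the images $\Gamma_\theta$ and $SL_2(\mathbb{Z})$. You should delete the false intermediate claims: $\nu_5\eta_8$ generates $\pi_9(S^5)$, not $\pi_{10}(S^5)$, and $\nu_5\eta_8^2$ lies in $\pi_{10}(S^5)$, not in $\pi_{11}(S^5)$; there is also no slip in Lemma~\ref{lem:orbit}, since only your own degree bookkeeping was off.
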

\begin{proof}
From Toda \cite[Chapter~V]{toda} we use $\pi_6(S^5)=\mathbb{Z}/2\{\eta_5\}$, $\pi_{10}(S^5)=\mathbb{Z}/2\{\nu_5\eta_8^2\}$ and $[\iota_5,\iota_5]=\nu_5\eta_8$, and $\pi_8(S^7)=\mathbb{Z}/2\{\eta_7\}$, $\pi_{14}(S^7)=\mathbb{Z}/120$.

(a) Since $\eta_5=E\eta_4$ is a suspension, $[\eta_5,\iota_5]=[\iota_5,\iota_5]\circ\eta_9=\nu_5\eta_8\eta_9=\nu_5\eta_8^2$, which generates $\pi_{10}(S^5)$. Hence $Q_5=0$ and $\ker\pi=0$ by Theorem~\ref{thm:SpSp}(a). The image is $\Gamma_\theta$ by part (b) of that theorem.

(b) The sphere $S^7$ is an $H$-space, so $[\iota_7,\iota_7]=0$ and $[\eta_7,\iota_7]=[\iota_7,\iota_7]\circ\eta_{13}=0$. Hence $Q_7=\pi_{14}(S^7)=\mathbb{Z}/120$, and the image is $SL_2(\mathbb{Z})$ by Theorem~\ref{thm:SpSp}.
\end{proof}

\begin{remark}
The description of $\ker\pi$ in part (a) is the case $m=n$ of Pave\v{s}i\'c's exact sequence \cite[Proposition~5.4]{pavesic}, with $Q_p=\operatorname{Coker}[\iota_p,-]$, and the determination of the image of $\pi$ by an obstruction on the attaching map $w$ is the method that Pave\v{s}i\'c attributes to Sawashita \cite{sawashita}. The point of the theorem here is that the two are put together for $S^p\times S^p$ with explicit answers. For $p=3$ and $p=4$ it recovers the kernels $(\mathbb{Z}/12)^2$ and $(\mathbb{Z}/2)^2$ found above, since $[\iota_3,\iota_3]=0$ and $Q_4=R$. For $p=7$ the kernel agrees with that in Sieradski's exact sequence \cite{sieradski}. The action of $SL_2(\mathbb{Z})$ on $(\mathbb{Z}/120)^2$ and the question whether the sequence in Corollary~\ref{cor:S5S5S7S7}(b) splits are not settled here, since the method used for $S^3\times S^3$ needs a group structure on $S^3$. By Corollary~\ref{im}, $\gamma$ maps $\widehat\Omega^{\B}_{2p+1}(P,M)$ onto $\ker\pi$, so for $p=7$ this relative string bordism group has at least $120^2$ elements.
\end{remark}

\section{Open problems}
\label{subsec:open}

\begin{enumerate}
\item \textbf{Extensions for $S^7\times S^7$ and $S^p\times S^p$.}
By Theorem~\ref{thm:SpSp} the kernel for $S^7\times S^7$ is $(\mathbb{Z}/120)^2$ and the image is $SL_2(\mathbb{Z})$. For $S^7\times S^7$ Crowley \cite{crowley} shows that the extension splits. Determine the action of $SL_2(\mathbb{Z})$ on the kernel from the braid. The argument for $S^3\times S^3$ needs a group structure on the sphere, which $S^7$ does not have, so a different approach is needed. For further $p$ the groups $Q_p$ themselves require the Whitehead squares and the composition products in $\pi_{2p}(S^p)$. It would also be of interest to compare the braid with the work of Booth and Heath \cite{booth-heath}, Heath \cite{heath} and Pave\v{s}i\'c \cite{pavesic}.

\item \textbf{Products $S^p\times S^q$ in general.}
For $M=S^p\times S^q$ with $p\ne q$, Lemma~\ref{lem:orbit} and the pinch-fold twists describe a subgroup of $\ker\pi$ as a quotient of $\pi_{p+q}(S^p)\oplus\pi_{p+q}(S^q)$ by Whitehead products. Determine when this is all of $\ker\pi$ and how $\Aut_\bullet(M)$ acts on it. The braid applies only when $q\in\{p,p+1\}$ with the Postnikov section $P$ computed as in the case of $S^3\times S^4$.

\item \textbf{The relative bordism groups.}
Compute $\widehat\Omega^{BString}_8(P,S^3\times S^4)$ and $\widehat\Omega^{BString}_9(P,S^4\times S^4)$. By Corollary~\ref{im} and Theorems~\ref{thm:S3S4} and~\ref{thm:S4S4} they have at least $48$ and at least $4$ elements, respectively, and it is not clear whether these bounds are sharp.

\item \textbf{Non-trivial fundamental groups.}
Hypothesis (T) and the proof of Lemma~\ref{lem:PD_upgrade} restrict the fundamental group to be finite or infinite cyclic. Extending the braid to manifolds with non-trivial monodromy on $\pi_i(X)$ would require tracking the $\pi_1(P)$-action through the obstruction groups, and equivariant bordism and $L$-groups would enter, as in \cite{pamuk1,pamuk2}.
\end{enumerate}

\section*{Acknowledgments}
I thank Antonio Viruel for detailed comments on an earlier version, and for pointing out the literature on self-equivalences of simply connected spaces and of products of spheres.
I used an AI language model (Claude, Anthropic) 
to edit the manuscript’s language, check LaTeX, and review selected mathematical arguments.  I have verified all of the mathematics and take full responsibility for the content.  

\bibliographystyle{amsplain}
\providecommand{\bysame}{\leavevmode\hbox to3em{\hrulefill}\thinspace}
\providecommand{\MR}{\relax\ifhmode\unskip\space\fi MR }
\providecommand{\MRhref}[2]{\href{http://www.ams.org/mathscinet-getitem?mr=#1}{#2}}
\providecommand{\href}[2]{#2}

\end{document}